%% file: main.tex
\documentclass[11pt]{article}
\input{packages.tex}

\input{macros.tex}

\title{The Neighbor Graph of Linear Complementary Dual (LCD) Codes}
\author{Javier de la Cruz\footnote{The authors were supported by a Leading House for the Latin American Region Research Partnership Grant, administered by the University of St.Gallen and mandated by the Swiss State Secretariat for Education, Research and Innovation (SERI), Switzerland.}, Anna-Lena Horlemann$^*$, Marc Newman$^*$, Carlos Vela Cabello$^*$, Wolfgang Willems}

\begin{document}
\maketitle


\begin{abstract}
	Linear complementary dual (LCD) codes form an important class of linear codes with applications in cryptography, classical error correction, and quantum coding theory.
	In this paper, we study the neighbor relation on LCD codes over finite fields and the graph induced by this relation, where two codes are adjacent whenever they intersect in codimension one.
We determine the number of neighbors of an LCD code that are also LCD, and we use this result to analyze the structure of the corresponding neighbor graph.
	In particular, we prove its regularity over arbitrary finite fields and establish further regularity properties for its main structural subgraphs in the binary and odd-characteristic cases.
These results provide a graph-theoretic framework for the study of LCD codes and reveal a strong combinatorial regularity in their neighborhood structure.
\end{abstract}


\section{Introduction}

Error-correcting codes are fundamental in ensuring reliable data transmission and secure communication.
Among them, \emph{linear complementary dual (LCD) codes}, first introduced by Massey in 1992, have attracted significant interest due to their unique algebraic structure and wide range of applications~\cite{Mas92}.
LCD codes are defined as linear codes whose intersection with their dual is trivial, or---alternatively---that the vector space generated by the code and its dual is the whole ambient space.
This property makes LCD codes particularly robust against certain cryptographic attacks and enhances their effectiveness in error correction~\cite{CG14}.

Historically, LCD codes were first studied in the context of cyclic codes, where they were shown to have efficient algebraic constructions over finite fields.
In~\cite{dlCW18} the authors generalized previous results on LCD cyclic codes, offering a unified framework for the study and characterization of LCD group codes as ideals of the group algebra $\F_q G$.
At the same time, the algebraic structure and classification of LCD codes have attracted considerable attention, including the study of families with prescribed automorphism groups~\cite{BouyuklievaDeLaCruz2022}.
Moreover, the construction and classification of MDS LCD (maximum distance separable LCD codes) codes have become an active area of research due to their optimal trade-off between redundancy and error-correcting capability~\cite{8319441,9447685}.

One of the more recent applications of LCD codes is quantum error correction.
Quantum codes require special algebraic structures to protect quantum information from decoherence and noise.
While self-orthogonal codes are used in certain quantum error-correcting code constructions, LCD codes have emerged as a powerful tool in the design of entanglement-assisted quantum error-correcting codes (EAQECCs).
These codes allow quantum states to be protected against errors, utilizing entanglement between quantum systems to improve the efficiency and performance of error correction~\cite{Ren2023, Guenda2018EAQECC,Liu2019}.

In this paper, we study the neighborhood relation of LCD codes, i.e., LCD codes that intersect in co-dimension one---meaning that they are related to one another through small, localized changes.
The concept of code neighbors has been widely used for various code classes, as it provides valuable insights into their structure and performance.
For instance, it can help identify codes with good parameters, classify codes based on shared properties, and develop adaptive decoding techniques where the decoder can dynamically switch between neighboring codes to correct errors more efficiently.
In a previous study, Dougherty~\cite{Dou22} investigated the neighbors of binary self-dual codes and described the neighbor graph as a tool to analyze both the codes and their neighbors.
Inspired by this approach, the present work focuses on LCD codes---exploring their neighbors and examining the corresponding neighbor graph to gain deeper insights into their structure and potential applications in coding theory.

The graph representation of code neighbors is constructed by treating each code as a vertex and connecting two codes with an edge if they are neighbors.
By applying tools from graph theory to this representation, we can extract valuable information about the codes' relationships.
The insights might provide a powerful tool for both constructing better codes and designing more efficient decoding techniques in the future.

Our main results are the following:
\begin{itemize}
	\item In Section~\ref{sec:neighbors}, Theorem~\ref{thm:LCD-neighbors} specifies the number of neighbors of an LCD code over $\mathbb{F}_q$ that are also LCD codes.
		This result assists in describing the regularity of these graphs.
	\item In Sections~\ref{sec:Structureq2} and~\ref{sec:Structureqodd}, we study the specific structures depending on the parity of $q$; that is, in Theorem~\ref{thm:q2Subgraphsregularity} and Corollary~\ref{cor:qOddproperties} we provide the regularity of the main structural subgraphs of the LCD neighbor graphs.
\end{itemize}

The paper is structured as follows: in Section~\ref{sec:prelims}, we first state the necessary preliminaries on LCD codes and neighbors, including some new results on code neighbors.
Then, in Section~\ref{sec:neighbors}, we derive theoretical results on neighbors of LCD codes.
In Section~\ref{sec:Structureq2} we analyze the subgraphs of the LCD neighbor graph for $q = 2$ as well as for odd $q$ in Section~\ref{sec:Structureqodd}.
Finally, in Section~\ref{sec:Conclusions} we give some conclusions and further research.


\section{Preliminaries}\label{sec:prelims}


\subsection{Coding and graph theory basics}

Let $\F_q$ be a finite field.
By $\Ang{\cdot \, , \cdot}$ we denote the Euclidean inner product on $\F_q^n$.
Following the usual language in coding theory literature, we will call a vector $v\in \F_q^n$ \emph{self-orthogonal} if $\Ang{v, v} = 0$, but we remark that these vectors are also called \emph{isotropic} with respect to the inner product.

A \emph{linear code} of length $n$ is simply a subspace of the vector space $\F_q^n$.
The \emph{dual code} $C^\perp$ of $C \subseteq \F_q^n$ is defined as
\begin{align*}
	C^\perp := \{v\in \F_q^n \mid \Ang{c, v}=0 \; \forall c\in C\}.
\end{align*}
As is common, we denote by an ${[n, k]}_q$ code a linear code in $\F_q^n$ of dimension $k$.
It follows that the dual of an ${[n, k]}_q$ code is an ${[n, n - k]}_q$ code.
Any matrix $G\in \F_q^{k\times n}$ whose rows form a basis of an ${[n, k]}_q$ code is called a \emph{generator matrix} of the code, and any matrix $H \in \F_q^{(n - k) \times n}$ whose kernel is the code is called a \emph{parity check matrix} of the code.

As an error-correcting code, linear codes are usually equipped with a distance, e.g., the Hamming, Lee, or rank distance.
For any of those distance functions $d$, the \emph{minimum distance} of the code $C$ is defined as 
\begin{align*}
	\min\{d(u, v) \mid u,v \in C, u\neq v\}.
\end{align*}
The minimum distance of a code defines its error detection and correction capabilities.
It is one of the main goals of coding theory to find upper bounds on the maximal achievable minimum distance and to find good code constructions, maximizing this value, for given $q,k$ and $n$.
While we will not determine any minimum distances in this paper, we want to remark that a neighbor graph can facilitate the search for codes with good minimum distances.
This fact is therefore one of the motivations for studying the neighbor graph of LCD codes, in particular for small field sizes.

Given a graph $G = (V, E)$ and a subset of vertices $S \subseteq V$, the \emph{induced subgraph} $G[S]$ is the graph with vertices $S$ and edges $\{(u, v) \in E : u, v \in S\}$.
Given two disjoint subsets of vertices $S, S' \subseteq V$, we define $G[S, S']$ to be the bipartite graph on independence sets $S$ and $S'$ with edges $\{(u, v) \in S \times S' : (u, v) \in E\}$.

Given a graph $G$, we will denote by $\lambda_i(G)$ the $i$th largest eigenvalue of the adjacency matrix of $G$ and define $\Spec(G) = (\lambda_1(G), \lambda_2(G), \ldots)$ to be the \emph{spectrum} of $G$, that is the list of all its eigenvalues in nonincreasing order.

A partition of the vertices of a graph $G$ is called \emph{equitable} if, for a partition $V_1, \ldots, V_\ell$, the number of edges between any vertex $v \in V_i$ and any set $V_j$ of the partition (where $i$ is not necessarily different than $j$) depends only on $i$ and $j$ and not on the choice of $v$.
	Specifically, this means that $G[V_i]$ is regular and $G[V_i, V_j]$ is biregular for all choices of $i$ and $j$.

\begin{proposition}\cite{GR13}\label{prop:equitable}
	The orbits of any graph automorphism form an equitable partition.
\end{proposition}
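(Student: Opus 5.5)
We interpret the statement as follows. Let $H$ be any group of automorphisms of $G=(V,E)$. If only a single automorphism $\sigma$ is given, take $H=\langle \sigma\rangle$; this group is finite because $\operatorname{Aut}(G)$ is finite for finite $G$. The claim is that the $H$-orbits $V_1,\ldots,V_\ell$ partition $V$ equitably. The plan is a direct transport-of-structure argument: use an element of $H$ to carry one vertex of an orbit onto another, and check that it carries neighbors to neighbors while fixing every orbit setwise.

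First I would note that the orbits do partition $V$, since they are the equivalence classes of the relation $v\sim w \iff w=\tau(v)$ for some $\tau\in H$. Next, every $\tau\in H$ maps each orbit onto itself: if $u\in V_j$ then $\tau(u)\in H u = V_j$. Applying the same fact to $\tau^{-1}$ shows that $\tau$ restricts to a bijection $V_j\to V_j$.

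The core step fixes $i,j$ and two vertices $v,w\in V_i$, and chooses $\tau\in H$ with $\tau(v)=w$. Since $\tau$ is a graph automorphism, $u$ is adjacent to $v$ if and only if $\tau(u)$ is adjacent to $w$. So $\tau$ maps the neighborhood $N(v)$ bijectively onto $N(w)$. Combining this with the invariance of $V_j$, $\tau$ restricts to a bijection
\[
N(v)\cap V_j \;\longrightarrow\; N(w)\cap V_j .
\]
Hence $|N(v)\cap V_j| = |N(w)\cap V_j|$. This number therefore depends only on $i$ and $j$, which is exactly the definition of an equitable partition. The case $i=j$ gives regularity of $G[V_i]$, and the case $i\neq j$ gives biregularity of $G[V_i,V_j]$.

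I expect no real obstacle here. The only point needing care is the interpretation of ``orbits of an automorphism'' as orbits of the cyclic group it generates. One must also make sure that the inverse of $\tau$ lies in $H$, so that $\tau(V_j)=V_j$ and not merely $\tau(V_j)\subseteq V_j$. Even without using the inverse, injectivity of $\tau$ on the finite set $V_j$ gives equality.
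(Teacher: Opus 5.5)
Your proof is correct and is the standard argument: an automorphism $\tau\in H$ with $\tau(v)=w$ fixes every orbit setwise and so restricts to a bijection $N(v)\cap V_j\to N(w)\cap V_j$. The paper gives no proof of its own and simply cites Godsil--Royle, which uses essentially this argument; your reading of the statement as being about orbits of a group $H$ of automorphisms is also the form the paper actually needs when it applies the result in Theorem~\ref{thm:biregular}.
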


We denote by $\GL_n(F_q)$ the general linear group and by $O_n(q)$ the orthogonal group acting on $\F_q^n$ (i.e., the invertible and the orthogonal matrices with respect to the Euclidean inner product over $\F_q$ of size $n \times n$).
The orbit of some element $C$ under the action of a group $G$ will be denoted by $\Orb_{G}(C)$.
Moreover, we will use the notation 
\begin{align*}
	\Leg{x}{q} := \begin{cases}
		 0 &\text{if $x = 0$,} \\
		 1 &\text{if $x$ is a square in $\F_q$,} \\
		-1 &\text{else.}
	\end{cases}
\end{align*}
for the \emph{Legendre symbol} indicating if $x$ is a quadratic residue in $\F_q$.
Finally, recall that the \emph{$q$-binomial coefficient} is defined as
\begin{align*}
	\Qbinom{n}{k}{q} := \prod_{i = 1}^k \frac{q^{n - k + i} - 1}{q^i - 1}.
\end{align*}
It is well-known that this value is the number of $k$-dimensional subspaces of $\F_q^n$ (i.e., ${[n, k]}_q$ linear codes) and that $\Qbinom{n}{k}{q} = \Qbinom{n}{n - k}{q}$.

We will additionally need the following results.

\begin{proposition}\cite{DSV45}\label{prop:spec-connect}
	Let $G$ be an $r$-regular graph.
	Then $r$ will be the largest eigenvalue in $\Spec(G)$ and $G$ is connected if and only if $r$ has multiplicity 1 in $\Spec(G)$.
\end{proposition}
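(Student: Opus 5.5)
Since $G$ is $r$-regular, the all-ones vector $\mathbf{1}$ satisfies $A\mathbf{1} = r\mathbf{1}$, where $A$ is the adjacency matrix. So $r$ is an eigenvalue. To show it is the largest, I take any eigenvector $x$ with eigenvalue $\lambda$ and choose an index $i$ with $|x_i|$ maximal. Then
\begin{align*}
|\lambda|\,|x_i| = \Bigl|\sum_{j \sim i} x_j\Bigr| \le r\,|x_i|,
\end{align*}
so $|\lambda| \le r$ for every eigenvalue, and hence $\lambda_1(G) = r$.

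For the connectivity criterion, I first study the eigenspace of $r$. Let $Ax = rx$ with $x$ real; this is allowed because $A$ is symmetric. Choose $i$ with $x_i$ maximal. From $r x_i = \sum_{j\sim i} x_j \le r x_i$, equality must hold throughout, so $x_j = x_i$ for every neighbor $j$ of $i$. Repeating this argument from each neighbor shows that $x$ is constant on the connected component containing $i$. Restricting $x$ to the remaining vertices, it is still an eigenvector there or zero, because components do not interact. Applying the same argument component by component, $x$ is constant on each component. Conversely, the indicator vector of any component is an eigenvector for $r$, since each component is itself $r$-regular. Therefore the multiplicity of $r$ equals the number of connected components, since $A$ is diagonalizable and geometric multiplicity equals algebraic multiplicity. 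Consequently $r$ is simple if and only if $G$ is connected.

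The only delicate step is the maximum-propagation argument. It requires that $x$ can be taken real, which holds by symmetry of $A$, and that geometric multiplicity equals algebraic multiplicity, which holds by the spectral theorem. Both follow at once from the symmetry of $A$, so I expect no real obstacle.
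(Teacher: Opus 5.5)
Your proof is correct and complete: the all-ones eigenvector, the maximum-modulus bound $|\lambda|\le r$, and the maximum principle for real $r$-eigenvectors combined with the indicator vectors of components are exactly the standard argument. The paper gives no proof of its own and simply cites \cite{DSV45}, whose proof is this same one; your version is slightly sharper, since it shows the multiplicity of $r$ equals the number of connected components.
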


We are going to use the proposition above in the following way: if we can show that the regularity degree $r$ of a graph $G$ fulfills $r>\lambda_2(G)$, then $G$ is connected. 

\begin{theorem}\cite{GR13}\label{thm:interlacing}
	Let $G$ be a graph on $n$ vertices and let $G'$ be an induced subgraph of $G$ on $n'$ vertices.
	Then for $i \in \{1, 2, \ldots, n'\}$:
	\begin{align*}
		\lambda_{n - n' + i}(G) \leq \lambda_i(G') \leq \lambda_i(G).
	\end{align*}
\end{theorem}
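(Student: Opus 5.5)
The statement to prove is Cauchy interlacing for an induced subgraph $G'$ of $G$. Since $G'$ is induced, its adjacency matrix $A'$ is a principal submatrix of the adjacency matrix $A$ of $G$. Both are real symmetric. So the claim is the Cauchy interlacing theorem for principal submatrices of a real symmetric matrix, with eigenvalues in nonincreasing order. I would prove it via the Courant--Fischer min-max characterization.

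First, order the vertices so that those of $G'$ come first. Then $A' = P^\top A P$, where $P\in\mathbb{R}^{n\times n'}$ consists of the first $n'$ columns of the identity matrix, so $P^\top P = I_{n'}$. Courant--Fischer gives, for a symmetric $M$ of size $m$,
\begin{align*}
\lambda_i(M) = \max_{\dim U = i}\ \min_{0\neq x\in U} \frac{x^\top M x}{x^\top x} = \min_{\dim W = m-i+1}\ \max_{0\neq x\in W}\frac{x^\top M x}{x^\top x}.
\end{align*}
I take this as a standard fact from linear algebra, since the paper works at that level and cites \cite{GR13}.

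For the upper bound $\lambda_i(G')\le\lambda_i(G)$, take an $i$-dimensional subspace $U\subseteq\mathbb{R}^{n'}$ attaining the max for $A'$. Its image $PU$ is $i$-dimensional, because $P$ is injective. For $y=Px$ we have $y^\top A y = x^\top A' x$ and $y^\top y = x^\top x$, so the Rayleigh quotients agree. Hence $\lambda_i(A) \ge \min_{PU} = \lambda_i(A')$.

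For the lower bound, use the min form. Take $W\subseteq\mathbb{R}^{n'}$ of dimension $n'-i+1$ attaining $\lambda_i(A')$. Then $PW$ has dimension $n'-i+1 = n-(n-n'+i)+1$. This is exactly the dimension required to bound the $(n-n'+i)$-th eigenvalue of $A$, giving $\lambda_{n-n'+i}(A)\le \max_{PW} = \lambda_i(A')$. Equivalently, one can apply the upper bound to $-A$ and $-A'$ and reindex.

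The only delicate step is the index bookkeeping in the lower bound, namely checking that the subspace dimension matches the index $n-n'+i$. The rest follows directly once Courant--Fischer is available.
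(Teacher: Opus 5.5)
Your proof is correct: the paper gives no argument of its own for this statement and only cites \cite{GR13}, and your route is the standard one. You write $A' = P^\top A P$ with $P^\top P = I_{n'}$, transfer Rayleigh quotients through the injective map $P$, and use the max--min form for the upper bound and the min--max form for the lower bound, with the index check $n-(n-n'+i)+1 = n'-i+1$. This is essentially the argument in the cited source, which works with a general $R$ satisfying $R^\top R = I$, does the subspace-intersection dimension count by hand instead of quoting Courant--Fischer, and gets the lower bound by passing to $-A$, as in your alternative remark.
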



\subsection{Neighbors of linear codes}

While the concept of code neighbor has been used for pairs of code with various types of small changes, we will use it for codes that intersect in co-dimension $1$:

\begin{definition}
	Two linear codes of length $n$ and dimension $k$ over $\F_q$ are called \emph{neighbors} if their intersection is of dimension $k-1$.
\end{definition}

\begin{proposition}\label{prop:all-neighbors}
	Let $C \subseteq \F_q^n$ be a $k$-dimensional code.
	Then $C$ has $ \frac{(q^k-1)(q^{n-k+1}-q)}{{(q-1)}^2}$ neighbors.
\end{proposition}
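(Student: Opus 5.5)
A neighbor $D$ of $C$ is determined by the pair $(W, D)$ with $W = C\cap D$ a hyperplane of $C$, so I will count these pairs. Since $W$ is recovered from $D$ as $C\cap D$, each neighbor is counted exactly once.

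First, choose the $(k-1)$-dimensional subspace $W \subseteq C$. The number of hyperplanes of a $k$-dimensional space is $\Qbinom{k}{k-1}{q} = \frac{q^k-1}{q-1}$.

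Second, for fixed $W$, count the $k$-dimensional spaces $D \supseteq W$ with $D \neq C$. These correspond to $1$-dimensional subspaces $D/W$ of the quotient $\F_q^n/W$, which has dimension $n-k+1$. Any $D \supseteq W$ of dimension $k$ automatically satisfies $C \cap D = W$ unless $D = C$. Indeed, $C\cap D$ contains $W$ and has dimension at most $k$, with equality exactly when $D = C$. So the number of choices is
\[
\frac{q^{n-k+1}-1}{q-1} - 1 = \frac{q^{n-k+1}-q}{q-1}.
\]

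Multiplying the two counts gives $\frac{(q^k-1)(q^{n-k+1}-q)}{(q-1)^2}$, as claimed. The only point needing care is the injectivity noted at the start, together with excluding $D = C$ in the second step. There is no real obstacle; the argument is a direct double-counting using the $q$-binomial coefficients recalled above.
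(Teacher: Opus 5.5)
Your proof is correct and follows the paper's argument exactly: fix one of the $\frac{q^k-1}{q-1}$ hyperplanes of $C$ as the intersection, then choose a $1$-dimensional subspace of the $(n-k+1)$-dimensional quotient, excluding the one that returns $C$. You also spell out two points the paper leaves implicit: each neighbor is counted only once because $W = C\cap D$, and every $D \supseteq W$ with $D \neq C$ really is a neighbor.
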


\begin{proof}
	$C$ has $\Qbinom{k}{k-1}{q}=\frac{q^k-1}{q-1}$ many $(k-1)$-dimensional subspaces that can function as the possible intersection spaces with a neighbor.
	Each of these smaller subspaces, say $U$, can be extended to a (distinct) $k$-dimensional subspace with a one-dimensional subspace of the quotient space $\F_q^n/U$, out of which one will lead to $C$.
	Therefore we have
	\begin{align*}
		\frac{q^{n-(k-1)}-1}{q-1}-1 = \frac{q^{n-k+1}-q}{q-1}
	\end{align*}
	many choices to extend the intersection space to a neighbor of $C$, hence overall we get 
	\begin{align*}
		\frac{(q^k-1)(q^{n-k+1}-q)}{{(q-1)}^2}
	\end{align*}
	distinct neighbors of $C$.
\end{proof}

\begin{proposition}\label{prop:dual-neighbors}
	Let $C,C'\subseteq \F_q^n$ be two linear ${[n, k]}_q$ codes.
	Then $C$ and $C'$ are neighbors if and only if $C^\perp$ and $C'^{\perp}$ are neighbors.
\end{proposition}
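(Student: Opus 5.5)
The plan is to translate the neighbor condition into a statement about the sum $C + C'$ and then apply the standard identities relating sums and intersections of subspaces to their duals. Two facts about the Euclidean inner product on $\F_q^n$ are needed: it is nondegenerate, so $\dim U^\perp = n - \dim U$ and $(U^\perp)^\perp = U$ for every subspace $U$; and for subspaces $U, W$ we have $(U+W)^\perp = U^\perp \cap W^\perp$. The second holds because a vector orthogonal to $U+W$ is orthogonal to both $U$ and $W$, and conversely by linearity.

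First I will record that, since $\dim C = \dim C' = k$, the dimension formula $\dim(C+C') = 2k - \dim(C\cap C')$ makes the neighbor condition $\dim(C\cap C') = k-1$ equivalent to $\dim(C+C') = k+1$. Next I will compute
\[
\dim(C^\perp \cap C'^\perp) = \dim\bigl((C+C')^\perp\bigr) = n - \dim(C+C').
\]
Hence $C$ and $C'$ are neighbors if and only if $\dim(C^\perp\cap C'^\perp) = n-k-1 = (n-k)-1$. Since $C^\perp$ and $C'^\perp$ both have dimension $n-k$, this is exactly the statement that $C^\perp$ and $C'^\perp$ are neighbors. Because every step is an equivalence, both directions follow at once. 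Alternatively, one direction suffices, with the converse obtained by applying it to $C^\perp, C'^\perp$ and using $(U^\perp)^\perp = U$.

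No step is a genuine obstacle. The only point that needs care is nondegeneracy of the Euclidean form: even though individual vectors can be self-orthogonal, the Gram matrix is the identity, so $\dim U^\perp = n-\dim U$ still holds. I will state this explicitly. A minor edge case is $k = n$ (or $k = 0$), where neighbors do not exist on either side because the dual dimension is $0$ (respectively $n$). The argument handles this automatically, since both conditions then describe the empty situation.
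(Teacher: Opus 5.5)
Your proof is correct and matches the paper's argument: the paper also computes $\dim(C^\perp \cap C'^\perp) = \dim((C+C')^\perp) = n - \dim(C+C') = n - (2k - \dim(C \cap C'))$ and reads off the equivalence. Your explicit remarks on nondegeneracy of the Euclidean form and on the degenerate cases $k \in \{0, n\}$ are sound, and they only make explicit what the paper leaves implicit.
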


\begin{proof}
	This follows from simple linear algebra, as for two ${[n, k]}_q$ codes $C, C'$ we have
	\begin{align*}
		\dim(C^\perp \cap C'^\perp)
		= \dim({(C + C')}^\perp)
		= n - \dim(C + C')
		= n - (2k - \dim(C \cap C')).
	\end{align*}
\end{proof}

The neighbor graph of all ${[n, k]}_q$ codes is known as the \emph{Grassmann graph} or \emph{$q$-Johnson graph} and is denoted by $\Jq_q(n, k)$, i.e., it has as vertices all $k$-dimensional subspaces of $\F_q^n$ and two vertices are adjacent if their intersection has dimension $k - 1$.

\begin{theorem}\cite[Theorem 9.3.3]{BCN89}\label{thm:Jq-char}
	The characteristic polynomial of $\Jq_q(n, k)$ is
	\begin{align*}
		\chi(t)
		= \prod_{j = 0}^{\min(\{k, n - k\})}
		{\left(
			t - q^{j + 1} \frac{q^{k - j} - 1}{q - 1} \cdot \frac{q^{n - k - j} - 1}{q - 1} + \frac{q^j - 1}{q - 1} \right)}^{\left( \Qbinom{n}{j}{q} - \Qbinom{n}{j - 1}{q} \right)}.
	\end{align*}
	Note that the eigenvalue $\lambda_1(\Jq_q(n, k))$ is in correspondence with the root of the factor indexed by $j = 0$ and the eigenvalue $\lambda_2(\Jq_q(n, k))$ corresponds to $j = 1$.
\end{theorem}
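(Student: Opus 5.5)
Although the theorem is quoted from \cite{BCN89}, here is how I would prove it directly with up/down operators on the Grassmann lattice. Write $[m] := \frac{q^m - 1}{q - 1}$. Let $V_i$ be the real vector space of functions on the $i$-dimensional subspaces of $\F_q^n$, so $\dim V_i = \Qbinom{n}{i}{q}$. Define $U_i \colon V_i \to V_{i+1}$ and $D_i \colon V_i \to V_{i-1}$ as the inclusion maps: $(U_i f)(Y) = \sum_{X \subset Y} f(X)$ and $(D_i f)(Z) = \sum_{X \supset Z} f(X)$, where $\dim X = i$.

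\textbf{Step 1: express the adjacency matrix.} For $k$-spaces $X, Y$, the entry $(D_{k+1}U_k)_{X,Y}$ counts the $(k+1)$-spaces containing both $X$ and $Y$. This count is $[n-k]$ if $X = Y$, equals $1$ if $\dim(X \cap Y) = k-1$ (the only such space is $X + Y$), and is $0$ otherwise. Hence the adjacency matrix of $\Jq_q(n,k)$ is
\begin{align*}
A = D_{k+1}U_k - [n-k]\, I .
\end{align*}
The same count for $U_{k-1}D_k$ (the $(k-1)$-spaces inside both $X$ and $Y$) gives $[k]$ on the diagonal and the same off-diagonal pattern. This yields the commutation relation
\begin{align*}
D_{k+1}U_k - U_{k-1}D_k = ([n-k]-[k])\, I \quad \text{on } V_k .
\end{align*}

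\textbf{Step 2: decompose $V_k$.} For $0 \le j \le \min(k, n-k)$, put $W_j = \ker D_j \subseteq V_j$. I would show that $U_{j-1}$ is injective for $j \le n/2$; this follows from Step 3's eigenvalues being positive, or from the classical Kantor rank result. Then $V_j = U_{j-1}V_{j-1} \oplus W_j$ orthogonally, since $D_j$ is the transpose of $U_{j-1}$. It follows that $\dim W_j = \Qbinom{n}{j}{q} - \Qbinom{n}{j-1}{q}$. Iterating,
\begin{align*}
V_k = \bigoplus_{j=0}^{\min(k,n-k)} U^{k-j} W_j ,
\end{align*}
and $U^{k-j}$ is injective on $W_j$. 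This produces exactly the multiplicities in the statement.

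\textbf{Step 3: compute the eigenvalues.} Take $w \in W_j$. Using the commutation relation repeatedly and the fact that $Dw = 0$, we get $D\,U^{m}w = \bigl(\sum_{i=j}^{j+m-1} ([n-i]-[i])\bigr) U^{m-1}w$. Applying this with $m = k-j+1$ shows that $U^{k-j}w$ is an eigenvector of $D_{k+1}U_k$ with eigenvalue $\sum_{i=j}^{k}([n-i]-[i])$. The corresponding eigenvalue of $A$ is therefore $\sum_{i=j}^{k}([n-i]-[i]) - [n-k]$.

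\textbf{Main obstacle.} The remaining work is to verify the $q$-identity
\begin{align*}
\sum_{i=j}^{k}([n-i]-[i]) - [n-k] = q^{j+1}[k-j][n-k-j] - [j].
\end{align*}
I would prove it by induction on $k - j$. The case $j = k$ gives $-[k]$ on both sides, and the inductive step reduces to the identity $[a] - [b] = q^{b}[a-b]$. The other delicate point is the injectivity of $U$ for $j \le n/2$. I would derive it from Step 3 itself: the $D U$ eigenvalues $\sum_{i=j}^{l}([n-i]-[i])$ are strictly positive for $l < n/2$, so $U$ is injective on each summand, and an induction on $l$ then closes the argument.
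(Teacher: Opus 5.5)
Your argument is correct, and it takes a genuinely different route from the paper. The paper gives no proof: it imports the result from \cite[Theorem 9.3.3]{BCN89}, where the spectrum comes from the distance-regular (classical-parameter) structure of the Grassmann graph, i.e.\ from its intersection array and the general eigenvalue and multiplicity theory for such graphs. Your up/down-operator proof is self-contained. The single relation $D_{k+1}U_k-U_{k-1}D_k=([n-k]-[k])I$, together with $D_j=U_{j-1}^{\top}$, gives explicit eigenspaces $U^{k-j}\ker D_j$. Eigenvalues and multiplicities therefore come out together, and no intersection numbers need to be computed. The cited route yields more (the whole distance-regular structure) but needs that machinery. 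Your $q$-identity also checks out. Using $[j+1]-[j]=q^j$ and $[n-j]-[j]=q^j[n-2j]$ and dividing by $q^j$, the inductive step becomes $q[a][b]-q^2[a-1][b-1]+1=[a+b]$ with $a=k-j$, $b=n-k-j$. This follows from $q^2[a-1][b-1]=([a]-1)([b]-1)$ and $[a+b]=[a]+q^a[b]$.

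Two points need tightening.

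\emph{Ordering of the eigenvalues.} The statement also asserts that $j=0$ and $j=1$ give $\lambda_1$ and $\lambda_2$, and this is what the paper actually uses in its connectivity proofs. Your formula gives it immediately, and you should say so: $\theta_j-\theta_{j+1}=[n-j]-[j]>0$ whenever $j+1\le\min(k,n-k)$, and the $j=0$ eigenvalue has multiplicity $1$.

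\emph{The case of large $k$.} Literally iterating $V_l=U_{l-1}V_{l-1}\oplus W_l$ requires $U_{l-1}$ to be injective for every $l\le k$. This fails once $k-1\ge n/2$, since then $\dim V_{k-1}>\dim V_k$, and your positivity claim only covers $l<n/2$. There are two easy fixes.
\begin{itemize}
\item Reduce to $k\le n/2$ via the isomorphism $X\mapsto X^\perp$ of $\Jq_q(n,k)$ onto $\Jq_q(n,n-k)$ (Proposition~\ref{prop:dual-neighbors}). The eigenvalues, the multiplicities and the range of $j$ are all symmetric under $k\leftrightarrow n-k$.
\item Alternatively, note that $[n-i]-[i]$ is antisymmetric about $i=n/2$, so $\sum_{i=j}^{l}([n-i]-[i])>0$ for all $j\le l\le n-j-1$. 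This makes $U^{k-j}$ injective on $W_j$ for every $j\le n-k$. The spaces $U^{k-j}W_j$ are independent, because they lie in eigenspaces of the symmetric matrix $A$ for distinct eigenvalues. They fill $V_k$, since $\sum_{j=0}^{\min(k,n-k)}\bigl(\Qbinom{n}{j}{q}-\Qbinom{n}{j-1}{q}\bigr)=\Qbinom{n}{k}{q}$.
\end{itemize}
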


While the following result is known, we include a proof for completeness.

\begin{theorem}\label{thm:biregular}
	Let $H \leq \GL_n(\F_q)$ and let $C$ be an ${[n, k]}_q$ linear code being acted on on the right by elements of $H$.
	Then $\Jq_q(n, k)[\Orb_H(C)]$ is regular.
	Furthermore, if another ${[n, k]}_q$ linear code $C'$ exists such that $C' \notin \Orb_H(C)$, then the bipartite graph $\Jq_q(n, k)[\Orb_H(C), \Orb_H(C')]$ is biregular.
\end{theorem}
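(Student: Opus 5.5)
The plan is to show that every $g \in \GL_n(\F_q)$, acting on the right by $C \mapsto Cg$, is an automorphism of the Grassmann graph $\Jq_q(n,k)$. After that, either invoke Proposition~\ref{prop:equitable} or count neighbors directly via a bijection.

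\textbf{Step 1: $g$ is a graph automorphism.} Since $g$ is invertible, $v \mapsto vg$ is a linear automorphism of $\F_q^n$. It therefore sends $k$-dimensional subspaces to $k$-dimensional subspaces and gives a bijection on the vertex set of $\Jq_q(n,k)$, with inverse induced by $g^{-1}$. Moreover $(C_1 \cap C_2)g = C_1g \cap C_2g$ and dimensions are preserved. Hence $\dim(C_1g \cap C_2g) = k-1$ if and only if $\dim(C_1\cap C_2) = k-1$, so adjacency is preserved in both directions. Consequently $H$ acts on $\Jq_q(n,k)$ by graph automorphisms.

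\textbf{Step 2: counting neighbors inside an orbit.} Take $D_1, D_2 \in \Orb_H(C)$, so $D_2 = D_1h$ for some $h \in H$. Let $S$ be any union of $H$-orbits, for instance $\Orb_H(C)$ itself or $\Orb_H(C')$. The map $X \mapsto Xh$ sends $N(D_1) \cap S$ bijectively onto $N(D_2) \cap S$. Indeed, by Step 1 it maps neighbors of $D_1$ to neighbors of $D_1h = D_2$, and it preserves $S$ because $S$ is $H$-invariant. The inverse is given by $h^{-1}$. Therefore $|N(D)\cap S|$ is constant for $D \in \Orb_H(C)$.

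\textbf{Step 3: conclusion.} Taking $S = \Orb_H(C)$ shows that the induced subgraph $\Jq_q(n,k)[\Orb_H(C)]$ is regular. For $C' \notin \Orb_H(C)$, the two orbits are disjoint. Taking $S = \Orb_H(C')$ shows that every vertex of $\Orb_H(C)$ has the same number of neighbors in $\Orb_H(C')$. Swapping the roles of $C$ and $C'$ shows the same from the other side, so $\Jq_q(n,k)[\Orb_H(C),\Orb_H(C')]$ is biregular.

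Alternatively, the orbits of $H$ (viewed as a group of graph automorphisms) form an equitable partition by Proposition~\ref{prop:equitable}, and the definition of equitable gives exactly these two claims. No real obstacle is expected. The only point needing care is Step 1, namely that the right action preserves intersection dimensions. This is immediate from the linearity and invertibility of $g$.
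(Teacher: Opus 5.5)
Your proof is correct and follows the paper's argument. The paper likewise observes that right multiplication by any $M \in \GL_n(\F_q)$ preserves $\dim(C_1 \cap C_2)$, so it is an automorphism of $\Jq_q(n,k)$, and then concludes via Proposition~\ref{prop:equitable}; your explicit bijection in Step~2 just unpacks that proposition for the $H$-orbits, which makes the argument self-contained.
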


\begin{proof}
Let $C_1,C_2$ be ${[n, k]}_q$ linear codes.
	For any $M\in \GL_n(\F_q)$ we have $\dim(C_1 \cap C_2) = \dim((C_1 \cdot M) \cap (C_2 \cdot M))$ and therefore $C_1$ and $C_2$ are neighbors if and only if $C_1 \cdot M$ and $C_2 \cdot M$ are neighbors.
	Thus, $M$ represents a graph isomorphism of $\Jq_q(n, k)$ and, by Proposition~\ref{prop:equitable}, we obtain the regularity and biregularity of the subgraphs.
\end{proof}

For the rest of the paper, we will focus specifically on the orbits of LCD codes under the action of the orthogonal group.
While similar results might be of interest with regards to other groups---e.g., isometries generated by permutation or monomial matrices---the number of orbits in these cases are highly dependent on the parameters of the codes.
However, in the case of the orthogonal group, as we will see, we always have (at most) 3 orbits for $q = 2$ and 2 orbits for odd $q$ and can therefore explicitly study these subgraphs in generality for any choice of $q$, $n$, or $k$.


\subsection{Linear complementary dual (LCD) codes}

We now compile some results on linear complementary dual (LCD) codes that we will use later.
\begin{definition}
	A linear code $C\subseteq \F_q^n$ is called \emph{linear complementary dual (LCD)} if 
	\begin{align*}
		C \cap C^\perp = \{0\},
	\end{align*}
	or equivalently if $C \oplus C^\perp = \F_q^n$.
\end{definition}

The following proceeds directly from the definition.
\begin{lemma}
	If $C$ is LCD, then each $v\in \F_q^n$ can uniquely be written as
	\begin{align*}
		v = v|_C + v|_{C^\perp}
	\end{align*}
	with $v|_C \in C$ and $v|_{C^\perp} \in C^\perp$.
\end{lemma}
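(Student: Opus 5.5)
The statement follows from the direct-sum characterization of LCD codes. Fix $v \in \F_q^n$. For existence, we use the equivalent formulation $C \oplus C^\perp = \F_q^n$ given in the definition. Any element of $\F_q^n$ lies in $C + C^\perp$, so there are $c \in C$ and $d \in C^\perp$ with $v = c + d$. We then set $v|_C := c$ and $v|_{C^\perp} := d$.

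For uniqueness, suppose $v = c + d = c' + d'$ with $c, c' \in C$ and $d, d' \in C^\perp$. Rearranging gives $c - c' = d' - d$. The left-hand side lies in $C$ because $C$ is a subspace. The right-hand side lies in $C^\perp$, which is also a subspace. Hence $c - c' \in C \cap C^\perp = \{0\}$ by the LCD property, so $c = c'$ and consequently $d = d'$.

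If one instead starts only from $C \cap C^\perp = \{0\}$, the single extra step is to confirm that $C + C^\perp$ is all of $\F_q^n$. For this we use $\dim C + \dim C^\perp = k + (n-k) = n$, the standard dimension count for the nondegenerate Euclidean form recalled in the preliminaries. Then $\dim(C + C^\perp) = n - \dim(C \cap C^\perp) = n$, so the sum is the whole space.

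There is no genuine obstacle. The only point requiring care is the dimension count, which the paper already takes for granted when stating that the dual of an $[n,k]_q$ code is an $[n,n-k]_q$ code.
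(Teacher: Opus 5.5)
Your proof is correct and takes the same approach as the paper, which gives no separate argument and simply notes that the lemma follows directly from the definition $C \oplus C^\perp = \F_q^n$. Your uniqueness argument via $C \cap C^\perp = \{0\}$ and your dimension count showing that the two formulations of the LCD property agree just make explicit what the paper leaves implicit.
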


The following characterization is due to Massey~\cite{Mas92}.
\begin{theorem}\label{thm:LCD-equivalence}
	Let $G$ and $H$ respectively be a generator matrix and a parity-check matrix of a code $C\subseteq \F_q^n$.
	Then the following properties are equivalent:
	\begin{enumerate}[(a)]
		\item $C$ is LCD,
		\item $C^\perp$ is LCD,
		\item $GG^\top$ is nonsingular,
		\item $HH^\top$ is nonsingular.
	\end{enumerate}
\end{theorem}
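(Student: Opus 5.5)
We prove the four-way equivalence through the chain (a)$\Leftrightarrow$(c), (b)$\Leftrightarrow$(d), and (a)$\Leftrightarrow$(b). The common tool is one observation. Let $G\in\F_q^{k\times n}$ have full row rank $k$, so its rows span $C$. A vector $xG\in C$ (with $x\in\F_q^k$) lies in $C^\perp$ if and only if $\Ang{xG, yG}=0$ for all $y\in\F_q^k$. Since $\Ang{xG,yG} = xGG^\top y^\top$, this is equivalent to $xGG^\top = 0$.

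Because $G$ has full row rank, the map $x\mapsto xG$ is injective. It therefore restricts to an isomorphism between the left kernel of $GG^\top$ and $C\cap C^\perp$. Hence
\begin{align*}
	\dim(C\cap C^\perp) = k - \operatorname{rank}(GG^\top).
\end{align*}
So $C\cap C^\perp=\{0\}$ holds exactly when the $k\times k$ matrix $GG^\top$ is nonsingular. This gives (a)$\Leftrightarrow$(c).

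Next we need to know that the rows of a parity-check matrix span the dual code. If $H$ is a parity-check matrix of full row rank $n-k$ with kernel $C$, meaning $C=\{v : Hv^\top=0\}$, then every row of $H$ is orthogonal to $C$. So the row space of $H$ is contained in $C^\perp$, and comparing dimensions shows it equals $C^\perp$. Thus $H$ is a generator matrix of $C^\perp$. Applying the observation above to $C^\perp$ in place of $C$ gives (b)$\Leftrightarrow$(d).

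For (a)$\Leftrightarrow$(b), we use ${(C^\perp)}^\perp = C$, which holds because the Euclidean form is nondegenerate and $\dim C^\perp = n-k$. Then
\begin{align*}
	C^\perp\cap{(C^\perp)}^\perp = C^\perp\cap C,
\end{align*}
so the two trivial-intersection conditions are literally the same condition.

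The only delicate step is the bookkeeping on full rank. The matrix $H$ must have exactly $n-k$ linearly independent rows for the rank argument to apply, and $G$ must have full row rank for $x\mapsto xG$ to be injective. We take both as part of the definitions; if a non-minimal $H$ were allowed, (d) would have to be read for a full-rank choice. Otherwise the proof is routine linear algebra, with no real obstacle beyond correctly identifying the left kernel of the Gram matrix with $C\cap C^\perp$.
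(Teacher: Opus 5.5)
Your proof is correct and complete. The paper gives no proof of Theorem~\ref{thm:LCD-equivalence}; it quotes the result as Massey's characterization \cite{Mas92}, so there is nothing to compare your route against.

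Your argument is the standard linear-algebra proof behind that result:
\begin{itemize}
\item you identify $C\cap C^\perp$ with the image of the left kernel of the Gram matrix $GG^\top$ under the injective map $x\mapsto xG$;
\item you recognize $H$ as a generator matrix of $C^\perp$;
\item you use $(C^\perp)^\perp=C$.
\end{itemize}
It is also the same Gram-matrix reasoning the paper relies on later, for example the block-determinant computations in Lemmas~\ref{lem:augmented-binary} and~\ref{lem:augment-sign}.

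One small simplification: your caveat about a non-minimal $H$ is unnecessary under the paper's conventions. There a parity-check matrix is defined as an $(n-k)\times n$ matrix whose kernel is $C$, and rank--nullity then forces $\operatorname{rank} H = n-k$ automatically.
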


\begin{remark}
	As usual, we do not consider any proper binary extension fields, since any LCD code over a field of even characteristic has an equivalent LCD code over $\F_2$ (see Proposition~3 of~\cite{CG14full}).
	Moreover, we will distinguish two types of binary LCD codes, namely the \emph{even-like} ones, where $\sum_{i = 1}^n c_i = 0$
	for all $(c_1, \ldots, c_n) \in C$, and the \emph{odd-like} ones that do not fulfill the equation above.
\end{remark}

It is easy to see that since all vectors orthogonal to the whole code are outside of the code, the inner product $\Ang{\cdot}$ is regular (or non-degenerate) on any LCD code, i.e., if $\Ang{x,y}=0$  for all codewords $y$, then $x=0$.
The regularity gives rise to the following characterization of LCD codes.

\begin{theorem}\cite[Theorems 3,5,25]{CMTQ18}
	\begin{enumerate}[(a)]
		\item Let $C\subseteq \F_2^n$ be an odd-like binary code of dimension $k$.
			Then $C$ is LCD if and only if there exists an orthonormal basis of $C$.
		\item Let $C\subseteq \F_2^n$ be an even-like binary code of dimension $k$.
			Then, $C$ is LCD if and only if $k$ is even and there exists a basis $b_1, b'_1, \ldots, b_{\frac{k}{2}},b'_{\frac{k}{2}}$ of $C$ such that for any $i, j \in \{1, 2, \ldots, k\}$
			\begin{enumerate}[(i)]
				\item $\Ang{b_i , b_i} = \Ang{b'_i, b'_i}=0$; 
				\item $\Ang{b_i , b'_j} = 0$ if $i \neq j$;
				\item $\Ang{b_i , b'_i} = 1$.
			\end{enumerate} 
		\item Let $C$ be an ${[n, k]}_q$ code for $q$ odd.
			Then, $C$ is LCD if and only if there is a basis $b_1, \ldots, b_k$ of $C$ such that for any $i, j \in \{1, 2, \ldots, k\}$, and some $\delta \in \F_q^\times$
			\begin{enumerate}[(i)]
				\item $\Ang{b_i , b_j} = 0$ if $i \neq j$; 
				\item $\Ang{b_i , b_i} = 1$ if $i<k$;
				\item $\Ang{b_k , b_k} = \delta$.
			\end{enumerate} 
	\end{enumerate}
\end{theorem}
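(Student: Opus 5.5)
The plan is to prove the three parts separately, in each case building the desired basis by a Gram--Schmidt-type induction on $k$. The tool is the fact noted above: the restricted form $\Ang{\cdot,\cdot}|_C$ is a nondegenerate symmetric bilinear form exactly when $C$ is LCD. The reverse directions are immediate. If $C$ has a basis of any of the three listed shapes, then the Gram matrix $GG^\top$ of that basis is visibly invertible: it is the identity in (a), a block diagonal of copies of $\begin{pmatrix}0&1\\1&0\end{pmatrix}$ in (b), and $\mathrm{diag}(1,\ldots,1,\delta)$ in (c). Theorem~\ref{thm:LCD-equivalence} then gives that $C$ is LCD.

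For the forward directions, assume $C$ is LCD, so the form on $C$ is nondegenerate. The common inductive step is this. If $W\subseteq C$ is a subspace on which the form is nondegenerate, then $C=W\oplus (W^\perp\cap C)$, and the form remains nondegenerate on $W^\perp\cap C$. It therefore suffices to split off suitable nondegenerate pieces one at a time.

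\textbf{Part (c), $q$ odd.} Since the characteristic is odd, the identity $\Ang{x+y,x+y}=\Ang{x,x}+2\Ang{x,y}+\Ang{y,y}$ together with nondegeneracy shows that some $v\in C$ has $\Ang{v,v}\neq 0$. Splitting off $\langle v\rangle$ repeatedly yields an orthogonal basis with nonzero norms $a_1,\ldots,a_k$. Rescaling by scalars changes each $a_i$ only by squares, so it remains to reduce all but the last norm to $1$. The key two-dimensional fact is that a nondegenerate binary form $a x^2+b y^2$ over $\F_q$ represents every element of $\F_q^\times$. This follows by counting: the sets $\{ax^2\}$ and $\{1-by^2\}$ each have $(q+1)/2$ elements, so they intersect. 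Hence any plane spanned by $b_i,b_j$ contains a vector of norm $1$, and its orthogonal complement in the plane is spanned by a vector of nonzero norm. Applying this to consecutive pairs and inducting pushes all the non-square classes into the last vector, whose norm is $\delta$.

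\textbf{Part (a), binary odd-like.} Over $\F_2$ we have $\Ang{v,v}=\sum v_i$, which is the parity of $v$. Since $C$ is odd-like, some $v\in C$ has $\Ang{v,v}=1$. We split it off and recurse on $C'=v^\perp\cap C$, which is again LCD. The obstacle is that $C'$ may be even-like, and this is the step I expect to be hardest. First, one should try to choose $v$ so that $C'$ remains odd-like. The obstruction has a concrete form: $C'$ is even-like precisely when the parity functional on $C$, which equals $\Ang{\cdot,\mathbf 1}$, agrees with $\Ang{\cdot,v}$. By nondegeneracy this means $v$ is the unique vector $u\in C$ with $\Ang{c,u}=\Ang{c,\mathbf 1}$ for all $c\in C$, namely the projection $\mathbf 1|_C$. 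So one needs an odd-weight $v\neq \mathbf 1|_C$.

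If $\mathbf 1|_C$ is the only odd vector in $C$, I would handle this case separately. Let $w=\mathbf 1|_C$, so $\Ang{w,w}=\Ang{w,\mathbf 1}=1$. Then $w^\perp\cap C$ is even-like and alternating, hence hyperbolic. Take a hyperbolic pair $b,b'$ in it; then $w,b,b'$ span a space with Gram matrix $\mathrm{diag}(1)\oplus H$. A direct change of basis, using $w+b$, $w+b'$, $w+b+b'$, shows this space is $\cong I_3$. Working this out carefully is where most of the care goes, and it also shows that $\mathbf 1|_C$ cannot be the only odd vector when $k\ge 2$.

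\textbf{Part (b), binary even-like.} Here every vector in $C$ is self-orthogonal, so the form is alternating and nondegenerate. The standard symplectic basis argument applies: pick $b_1\neq 0$, use nondegeneracy to find $b'_1$ with $\Ang{b_1,b'_1}=1$, split off the hyperbolic plane $\langle b_1,b'_1\rangle$, and recurse. This also forces $k$ to be even.
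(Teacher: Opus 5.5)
The paper gives no proof of this statement; it is quoted from \cite{CMTQ18}. Your argument is correct and self-contained, with the caveat on (b) below. The natural route through the cited material is different. One combines Theorem~\ref{thm:LCD-equivalence} ($C$ is LCD iff $GG^\top$ is nonsingular) with the classical congruence classification of nonsingular symmetric matrices over $\F_q$:
over $\F_2$, such a matrix is congruent to $I_k$ if some diagonal entry is nonzero, and to a direct sum of blocks $\begin{pmatrix}0&1\\1&0\end{pmatrix}$ if the diagonal vanishes; for odd $q$, it is congruent to $\mathrm{diag}(1,\ldots,1,\delta)$. If $GG^\top=ADA^\top$, the rows of $A^{-1}G$ are the required basis. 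Over $\F_2$ the diagonal of $GG^\top$ records the parities of the rows of $G$, so ``odd-like'' is exactly ``nonzero diagonal''. Your orthogonal-splitting induction is in effect a proof of that classification. It is longer, but needs nothing beyond Massey's criterion.

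One simplification in (a): the case you set aside never arises for $k\ge 2$, and you need no hyperbolic-pair computation to see this. The even vectors of $C$ form the hyperplane on which $\Ang{\cdot,\mathbf 1}$ vanishes, so $C$ contains exactly $2^{k-1}\ge 2$ odd vectors. Hence an odd $v\neq\mathbf 1|_C$ always exists, your criterion makes $v^\perp\cap C$ odd-like and LCD, and the induction closes.

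In (b), your converse asserts that the Gram matrix is block diagonal. That also requires $\Ang{b_i,b_j}=\Ang{b'_i,b'_j}=0$ for $i\neq j$, which conditions (i)--(iii) as printed do not give. Without it the converse is false. In $\F_2^6$ take
$b_1=(1,1,0,0,1,1)$, $b'_1=(0,1,1,0,0,0)$, $b_2=(1,0,0,1,0,0)$ and $b'_2=(0,0,1,1,0,0)$.
These are linearly independent, have even weight, and satisfy (i)--(iii). Yet $b'_1+b_2=(1,1,1,1,0,0)$ is orthogonal to all four, so their span is not LCD. You are therefore proving the intended full-symplectic-basis version, and you should say so explicitly. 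Your forward construction produces such a basis anyway, so nothing else changes.
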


The LCD property of a code $C\subseteq \F_q^n$ is invariant under the action of the orthogonal group on $\F_q^n$.
This action splits the set of all ${[n, k]}_q$ LCD codes into several orbits, depending on the parameters.

\begin{theorem}\cite[Theorem 12, Proposition 28]{CMTQ18}\label{thm:orbits}
	\begin{enumerate}[(a)]
		\item Let $q = 2$ and $k$ and $n$ be two positive integers such that $k < n$.
			If they exist, denote by $\CodeCoo$ (resp.\ $\CodeCoe$) an odd-like ${[n, k]}_2$ LCD code whose dual is odd-like (resp.\ even-like), and by $\CodeCeo$ (resp.\ $C_{ee}$) an even-like ${[n, k]}_2$ LCD code whose dual is odd-like (resp.\ even-like).
			\begin{enumerate}[(i)]
				\item If $n$ is odd and $k$ is odd, then the set of ${[n, k]}_2$ LCD codes can be decomposed as the following disjoint union of orbits
					\begin{align*}
						\Orb_{\Ort_n(\F_2)}(\CodeCoo) \cup \Orb_{\Ort_n(\F_2)}(\CodeCoe).
					\end{align*}
				\item If $n$ is odd and $k$ is even, then the set of ${[n, k]}_2$ LCD codes can be decomposed as the following disjoint union of orbits
					\begin{align*}
						\Orb_{\Ort_n(\F_2)}(\CodeCoo) \cup \Orb_{\Ort_n(\F_2)}(\CodeCeo).
					\end{align*}
				\item If $n$ is even and $k$ is odd, then the set of ${[n, k]}_2$ LCD codes consist of only one orbit,
					\begin{align*}
						\Orb_{\Ort_n(\F_2)}(\CodeCoo).
					\end{align*}
				\item If $n$ is even and $k$ is even, then the set of ${[n, k]}_2$ LCD codes can be decomposed as the following disjoint union of orbits
					\begin{align*}
						\Orb_{\Ort_n(\F_2)}(\CodeCoo) \cup \Orb_{\Ort_n(\F_2)}(\CodeCoe) \cup \Orb_{\Ort_n(\F_2)}(\CodeCeo).
					\end{align*}
			\end{enumerate}
		\item Let $q$ be odd, $k$ and $n$ be two positive integers with $k < n$.
			Denote by $C_+$ (resp.\ $C_-$) an ${[n, k]}_q$ LCD code with $\det(G G^\top)$ being a square (resp.\ a non-square) in $\F_q$, for $G$ being any generator matrix of the code.
			Then the set of ${[n, k]}_q$ LCD codes can be decomposed as the following disjoint union of orbits 
			\begin{align*}
				\Orb_{\Ort_n(\F_q)}(C_+) \cup \Orb_{\Ort_n(\F_q)}(C_-).
			\end{align*}
	\end{enumerate}
\end{theorem}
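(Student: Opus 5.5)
Theorem~\ref{thm:orbits} is a classification result cited from \cite{CMTQ18}, so my plan is to reconstruct it by Witt-type arguments on the nondegenerate symmetric bilinear form $\Ang{\cdot,\cdot}$ restricted to $C$ and to $C^\perp$.

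The main tool is Witt's extension theorem. Let $C$ and $D$ be LCD codes such that the restricted forms on $C$ and on $D$ are isometric, and the restricted forms on $C^\perp$ and $D^\perp$ are isometric. Since $\F_q^n = C\oplus C^\perp = D\oplus D^\perp$ are orthogonal direct sums, the two isometries glue to an element of $\Ort_n(\F_q)$ mapping $C$ to $D$. Conversely, any orthogonal map carrying $C$ to $D$ restricts to isometries $C\to D$ and $C^\perp\to D^\perp$. Hence the orbits are in bijection with the pairs (isometry class of the form on $C$, isometry class of the form on $C^\perp$) that actually occur. This direct-sum argument works in characteristic $2$ as well, since it avoids Witt cancellation and only glues the two isometries.

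For $q$ odd, I will use that nondegenerate symmetric forms of fixed dimension are classified by their discriminant $\det(GG^\top)$ modulo squares. This is the content of part (c) of the preceding theorem from \cite{CMTQ18}: the form has a diagonal basis $(1,\ldots,1,\delta)$. The whole space $\F_q^n$ has discriminant $1$, and discriminants multiply over orthogonal sums, so the class of $C^\perp$ is determined by that of $C$. Thus there are at most two orbits, indexed by $\Leg{\det GG^\top}{q}$. Both classes occur for $1\le k<n$: I will take $C$ spanned by $e_1,\ldots,e_{k-1}$ and a vector $ae_k+be_{k+1}$ with $a^2+b^2=\delta$. Such $a,b$ exist for every $\delta$ because every element of $\F_q$ is a sum of two squares.

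For $q=2$, nondegenerate symmetric forms split into two types, using parts (a) and (b) of the preceding theorem. The non-alternating ones (odd-like codes) have an orthonormal basis. The alternating ones (even-like codes) are symplectic, which forces the dimension to be even. So the isometry class of each piece is determined by its dimension and its parity type, giving at most the four pairs $oo,oe,eo,ee$. I then prune them. Since $\F_2^n$ with the standard form is non-alternating, $C$ and $C^\perp$ cannot both be even-like, which excludes $ee$. An even-like code needs even dimension, so $C_{eo}$ requires $k$ even and $C_{oe}$ requires $n-k$ even. This yields exactly the listed cases: for $n,k$ odd the types $oo$ and $oe$; for $n$ odd and $k$ even the types $oo$ and $eo$; for $n$ even and $k$ odd only $oo$; for $n,k$ even all three. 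Finally I exhibit representatives: a code spanned by unit vectors realizes $oo$, and a code spanned by pairs $e_{2i-1}+e_{2i}$ combined with suitable vectors realizes an even-like piece. Note that the notation $C_{ee}$ in the statement never occurs as an orbit.

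The main obstacle is the characteristic-$2$ classification, namely that every non-alternating nondegenerate symmetric form over $\F_2$ is diagonalizable to the identity. Here I will rely on part (a) of the cited theorem; if I had to prove it, I would split off an anisotropic vector and induct, handling a leftover symplectic block via the identity $\mathrm{diag}(1)\oplus H\cong \mathrm{diag}(1,1,1)$. The other delicate point is checking, for $n$ even and $k$ even, that an $eo$ code exists, i.e.\ that an even-like LCD code has an odd-like dual. I will verify this explicitly on the code spanned by $e_1+e_2,\ e_2+e_3,\ldots$ in low dimension and then pad with unit vectors.
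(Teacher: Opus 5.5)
Your proposal is correct. The paper gives no proof of this statement: it is imported from \cite{CMTQ18} (Theorem 12 and Proposition 28 there), so there is no internal argument to match. What you have is a self-contained derivation from the characterization theorem stated just before it, which supplies the orthonormal, symplectic, and $\mathrm{diag}(1,\ldots,1,\delta)$ bases. Your key step is to glue an isometry $C\to D$ with an isometry $C^\perp\to D^\perp$ across $\F_q^n=C\oplus C^\perp=D\oplus D^\perp$. This is exactly what shows that orbits correspond to pairs of isometry types. As you observe, it needs neither Witt extension nor Witt cancellation, and the latter genuinely fails for symmetric forms over $\F_2$. So the argument works verbatim for $q=2$, and calling it ``Witt's extension theorem'' is unnecessary. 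Compared with the paper's bare citation, your route makes transparent why the binary case has at most three orbits and the odd case exactly two. Disjointness also comes for free, because orthogonal maps carry $C^\perp$ to $D^\perp$.

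Two small tidy-ups.

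First, your ``delicate point'' that an even-like LCD code has an odd-like dual is automatic from your exclusion of the $ee$ type. What you actually need is an even-like LCD code of every even dimension $k<n$. The cleanest witness is $\Ang{e_1+e_2, e_2+e_3,\ldots,e_k+e_{k+1}}$, i.e.\ the even-weight code of length $k+1$ followed by zero coordinates. Its Gram matrix is the adjacency matrix of a path on $k$ vertices, whose determinant is $\pm1$ for $k$ even. Its dual is spanned by the odd-weight vector with $k+1$ leading ones together with $e_{k+2},\ldots,e_n$. So ``padding'' must mean appending zero coordinates, with the new unit vectors going into the dual. Adjoining unit vectors to the code itself would destroy even-likeness.

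Second, for $n-k$ even, obtain the $oe$ representatives as duals of $eo$ codes of dimension $n-k$. State this explicitly, since your sketch only speaks of realizing ``an even-like piece''.
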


Note that the orthogonal group keeps the type described above of the LCD codes, hence all binary codes within an orbit have the same parity-like type, and all non-binary codes within the $\Orb_{\Ort_n(\F_q)}(C_+)$ (resp.\ $\Orb_{\Ort_n(\F_q)}(C_-)$) orbit have a square (resp.\ non-square) determinant of $G G^\top$.
The exact cardinalities of all these orbits have been determined in~\cite{CMTQ18} and depend on the parities of $q$, $n$ and $k$:

\begin{theorem}\cite[Corollaries 17 and 32]{CMTQ18}
	Let $k$ and $n$ be two positive integers with $k < n$.
	\begin{enumerate}[(a)]
		\item The number of ${[n, k]}_2$ LCD codes is 
			\begin{align*}
				\mu:= \begin{cases}
					2^{\frac{nk-k^2+n-1}{2}} \Qbinom{\frac{n}{2}-1}{\frac{k-1}{2}}{4}
					& \text{if $k$ odd, $n$ even,} \\
					2^{\frac{(k+1)(n-k)}{2}} \Qbinom{\frac{n-1}{2}}{\frac{k-1}{2}}{4}
					& \text{if $k$ odd, $n$ odd,} \\
					2^{\frac{k(n-k+1)}{2}} \Qbinom{\frac{n-1}{2}}{\frac{k}{2}}{4}
					& \text{if $k$ even, $n$ odd,} \\
					2^{\frac{k(n-k)}{2}} \left( 2^{n-k} \Qbinom{\frac{n}{2}-1}{\frac{k}{2}-1}{4} + \Qbinom{\frac{n}{2}-1}{\frac{k}{2}}{4} \right)
					& \text{if $k$ even, $n$ even.}
				\end{cases}
			\end{align*}
		\item If $q$ is odd, then the number of ${[n, k]}_q$ LCD codes is 
			\begin{align*}
				\nu := \begin{cases}
					q^{\frac{k(n-k)-1}{2}} \left(q^{\frac{n}{2}} - \Leg{{(-1)}^{\frac{n}{2}}}{q} \right) \Qbinom{\frac{n}{2}-1}{\frac{k-1}{2}}{q^2}
					& \text{if $k$ odd, $n$ even,} \\
					q^{\frac{(k+1)(n-k)}{2}} \Qbinom{\frac{n-1}{2}}{\frac{k-1}{2}}{q^2}
					& \text{if $k$ odd, $n$ odd,} \\
					q^{\frac{k(n-k+1)}{2}} \Qbinom{\frac{n-1}{2}}{\frac{k}{2}}{q^2}
					& \text{if $k$ even, $n$ odd,} \\
					q^{\frac{k(n-k)}{2}} \Qbinom{\frac{n}{2}}{\frac{k}{2}}{q^2}
					& \text{if $k$ even, $n$ even.}
				\end{cases}
			\end{align*}
	\end{enumerate}
\end{theorem}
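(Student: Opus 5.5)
This result is quoted from \cite{CMTQ18}, so the plan is to recover it by counting orbits with the orbit--stabilizer theorem. By Theorem~\ref{thm:orbits}, the ${[n, k]}_q$ LCD codes form at most three orbits under $\Ort_n(\F_q)$. So
\begin{align*}
	\#\{\text{LCD codes}\} = \sum_{C} \frac{|\Ort_n(\F_q)|}{|\mathrm{Stab}(C)|},
\end{align*}
where $C$ runs over the orbit representatives.

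For an LCD code $C$ we have $\F_q^n = C \oplus C^\perp$, and both summands are nondegenerate. Any orthogonal map fixing $C$ also fixes $C^\perp$. Hence $\mathrm{Stab}(C) \cong \Ort(C) \times \Ort(C^\perp)$, where each factor is the isometry group of the restricted form. This reduces the problem to knowing the orders of orthogonal groups of small nondegenerate symmetric bilinear spaces, and to knowing which isometry types occur for $C$ and $C^\perp$.

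For odd $q$, the isometry type of a nondegenerate space is determined by its dimension and discriminant. Since the ambient discriminant is $1$, the discriminant of $C^\perp$ equals that of $C$ modulo squares. By Theorem~\ref{thm:orbits}(b) this gives exactly two summands, for $C_+$ and $C_-$. I will use the standard orders:
\begin{itemize}
	\item odd dimension $2l+1$: $|\Ort| = 2q^{l^2}\prod_{i=1}^{l}(q^{2i}-1)$, independent of the discriminant;
	\item even dimension $2l$: $|\Ort| = 2q^{l(l-1)}(q^l-\varepsilon)\prod_{i=1}^{l-1}(q^{2i}-1)$, where $\varepsilon = \Leg{(-1)^l d}{q}$ and $d$ is the discriminant.
\end{itemize}
I then treat the four parity cases of $(n,k)$ separately.
\begin{itemize}
	\item $k$ even, $n$ even: the two terms have $\varepsilon$-factors $(q^{k/2}\mp 1)$. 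Their reciprocals add to a single expression, which should collapse to $q^{k(n-k)/2}\Qbinom{n/2}{k/2}{q^2}$.
	\item $k$ odd, $n$ even: the ambient factor $q^{n/2}-\Leg{(-1)^{n/2}}{q}$ survives in the numerator, while the two stabilizers have equal order. This explains both the extra factor and the exponent $\frac{k(n-k)-1}{2}$.
	\item $n$ odd: the even-dimensional one of $C$, $C^\perp$ carries the $\varepsilon$-factor, and summing the two terms cancels it.
\end{itemize}
In each case the remaining work is collecting powers of $q$ and regrouping products $\prod(q^{2i}-1)$ into a $q^2$-binomial.

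For $q=2$ I follow the same scheme, using the orbit description of Theorem~\ref{thm:orbits}(a) and the structure result quoted above from \cite{CMTQ18}:
\begin{itemize}
	\item an odd-like LCD space has an orthonormal basis, so it is the standard space $\F_2^m$;
	\item an even-like LCD space is symplectic, so its isometry group is $\mathrm{Sp}_m(\F_2)$.
\end{itemize}
For the standard form on $\F_2^m$, the all-ones vector is fixed by the group, and the quotient by (or restriction to) its orthogonal complement gives $\Ort_m(\F_2) \cong \mathrm{Sp}_{2\lfloor (m-1)/2\rfloor}(\F_2)$ up to an explicit elementary abelian $2$-factor. I expect the resulting order to be $2^{\lfloor (m-1)/2\rfloor^2}\prod_{i}(4^i-1)$ times a power of $2$. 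Plugging these orders into the orbit sum for each of the cases (i)--(iv) should give the four formulas for $\mu$. The factor $2^{n-k}$ in case (iv) comes from the relative sizes of the orbits of $\CodeCoe$, $\CodeCoo$ and $\CodeCeo$.

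The main obstacle is the binary case: pinning down $|\Ort_m(\F_2)|$ exactly for the standard form in both parities of $m$, and tracking the even-like/odd-like duality. If I cannot derive the group orders cleanly, my fallback is to count ordered orthonormal (respectively symplectic) bases of $\F_2^n$ directly. For this I will count self-orthogonal and non-self-orthogonal vectors in complements step by step, and then divide by the number of such bases of a fixed code.
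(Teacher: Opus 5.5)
The paper gives no proof of this statement. It is imported from \cite{CMTQ18}, together with the orbit decomposition (Theorem~\ref{thm:orbits}) and the individual orbit sizes (Theorem~\ref{thm:CMTQ-count}), so there is no internal argument to match. Your orbit--stabilizer plan is therefore a self-contained route, and it is correct. With $\mathrm{Stab}(C)\cong\Ort(C)\times\Ort(C^\perp)$ and the standard group orders, all eight cases land exactly on the stated formulas. In the binary case the separate orbit terms also reproduce Theorem~\ref{thm:CMTQ-count} term by term. For instance, for $n,k$ even one gets $\Abs{\LCDoo{n}{k}}=(2^{n-k}-1)\Abs{\LCDoe{n}{k}}$, so these two orbits together give $2^{n-k}\Abs{\LCDoe{n}{k}}$, which is exactly where the factor $2^{n-k}$ comes from. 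Compared with the bare citation, your route needs only the orbit classification, the orthonormal/symplectic basis characterization, and classical group orders. It also yields the separate orbit sizes that the paper later relies on in Proposition~\ref{prop:oeoo-edges}.

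Two points need tightening.

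First, for odd $q$ with $n=2m$ and $k=2a$, both $C$ and $C^\perp$ carry an $\varepsilon$-factor, not only $C$. If $\varepsilon_C=\varepsilon$ on one orbit, then $\varepsilon_C=-\varepsilon$ on the other. In both orbits $\varepsilon_C\varepsilon_{C^\perp}=\varepsilon_n:=\Leg{(-1)^m}{q}$, so the two terms sum to
\[
\frac{1}{(q^a-\varepsilon)(q^{m-a}-\varepsilon\varepsilon_n)}+\frac{1}{(q^a+\varepsilon)(q^{m-a}+\varepsilon\varepsilon_n)}=\frac{2(q^m+\varepsilon_n)}{(q^{k}-1)(q^{n-k}-1)}.
\]
This factor $q^m+\varepsilon_n$ combines with the ambient $q^m-\varepsilon_n$ into $q^n-1$, which completes $\prod_{i=1}^{m}(q^{2i}-1)$.

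Second, the binary group orders you flag as the main obstacle are
\[
\Abs{\Ort_{2l+1}(\F_2)}=\Abs{\mathrm{Sp}_{2l}(\F_2)}=2^{l^2}\prod_{i=1}^{l}(4^i-1),\qquad \Abs{\Ort_{2l}(\F_2)}=2^{l^2}\prod_{i=1}^{l-1}(4^i-1).
\]
In odd dimension there is no extra $2$-power: $w\notin w^\perp$, and the even-weight hyperplane is a nondegenerate alternating complement of the fixed vector $w$.

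The even-dimensional order needs no kernel analysis. $\Ort_{2l+1}(\F_2)$ acts transitively on the $4^l-1$ odd-weight vectors $u\neq w$. Indeed, for such $u$ the space $u^\perp$ is LCD and odd-like (since $w\notin\Ang{u}$), so it has an orthonormal basis, which extends by $u$ to one of $\F_2^{2l+1}$. The stabilizer of $e_{2l+1}$ is $\Ort_{2l}(\F_2)$, which gives the stated order.

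With these orders your orbit sums close in all four binary cases, so the fallback of counting orthonormal bases directly is unnecessary.
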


For odd $q$ we define the \emph{sign} of an ${[n,k]}_q$ LCD code $C$ with generator matrix $G\in \F_q^{k\times n}$ as 
\begin{align*}
	\Sign(C) := \Leg{\det(G G^\top)}{q}
\end{align*}
(occasionally, depending on the context, represented with some abuse of notation as $+$, $-$, or $0$).
This is independent of the choice of generator matrix, since if $G' = M G$ for some $k \times k$ invertible matrix $M$, then 
\begin{align*}
	\Leg{\det(G' G'^\top)}{q}
	&= \Leg{\det(M) \det(G G^\top) \det(M^\top)}{q} \\
	&= {\Leg{\det(M)}{q}}^2 \Leg{\det(G G^\top)}{q}
	= \Leg{\det(G G^\top)}{q}.
\end{align*}
The following proposition shows that the dual of an LCD code has the same sign as the original.

\begin{proposition}[\cite{CMTQ18}, Proposition 5.4]\label{prop:dual-sign}
	Let $q$ be odd and $C$ be an ${[n,k]}_q$ LCD code.
	Then $\Sign(C) = \Sign(C^\bot)$.
\end{proposition}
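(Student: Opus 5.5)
Work with a generator matrix $G$ of $C$ and a generator matrix $H$ of $C^\perp$. Stack them as
\begin{align*}
	M = \begin{pmatrix} G \\ H \end{pmatrix}.
\end{align*}
Every row of $G$ is orthogonal to every row of $H$, so
\begin{align*}
	M M^\top = \begin{pmatrix} G G^\top & 0 \\ 0 & H H^\top \end{pmatrix},
	\qquad
	\det(M)^2 = \det(G G^\top)\,\det(H H^\top).
\end{align*}
By Theorem~\ref{thm:LCD-equivalence} both $G G^\top$ and $H H^\top$ are nonsingular, and since $C \oplus C^\perp = \F_q^n$, the matrix $M$ is invertible. The left-hand side $\det(M)^2$ is therefore a nonzero square. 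It follows that $\det(G G^\top)$ and $\det(H H^\top)$ are either both squares or both non-squares, because their product is a nonzero square and the Legendre symbol is multiplicative on $\F_q^\times$.

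This gives $\Leg{\det(GG^\top)}{q}\Leg{\det(HH^\top)}{q} = \Leg{\det(M)^2}{q} = 1$. Both factors lie in $\{\pm 1\}$, so they are equal, that is, $\Sign(C) = \Sign(C^\perp)$. This uses that $H$ is a generator matrix of $C^\perp$, as required by the definition of $\Sign(C^\perp)$, which does not depend on the choice of generator matrix, as shown above.

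I expect no real obstacle. The only step needing care is the block-diagonal form of $M M^\top$, which uses only $\Ang{g,h}=0$ for $g\in C$, $h\in C^\perp$. The invertibility of $M$ follows directly from the LCD decomposition, and the rest is multiplicativity of the determinant and of the quadratic character.
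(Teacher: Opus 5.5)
Your proof is correct and complete. $GH^\top=0$ makes $MM^\top$ block diagonal, and $C\oplus C^\perp=\F_q^n$ makes $M$ invertible. Hence $\det(GG^\top)\det(HH^\top)=\det(M)^2$ is a nonzero square, and multiplicativity of the quadratic character on $\F_q^\times$ gives $\Sign(C)=\Sign(C^\perp)$.

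The paper gives no proof of this statement; it imports it from \cite{CMTQ18}. So your argument is not an alternative route but a self-contained justification of a result the paper takes on citation. It is the standard discriminant argument: the Euclidean form has Gram matrix $I_n$, and $\F_q^n=C\perp C^\perp$ is an orthogonal decomposition, so the discriminants of the restricted forms on $C$ and $C^\perp$ multiply to a square. It uses only the LCD decomposition and needs neither the orbit classification of Theorem~\ref{thm:orbits} nor the normal-form bases.

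One small simplification: you do not need Theorem~\ref{thm:LCD-equivalence}. Since $\det(M)\neq 0$, the identity $\det(M)^2=\det(GG^\top)\det(HH^\top)$ already forces both $\det(GG^\top)$ and $\det(HH^\top)$ to be nonzero.
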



\section{Number of neighbors among LCD codes}\label{sec:neighbors}

In this section we derive the number of neighbors of LCD codes.
It turns out that this number is independent of the choice of LCD code (given fixed parameters) and therefore also implies that the corresponding neighbor graph is regular.
We first state some preliminary lemmata which will then be used to prove the first main result on the number of LCD neighbors of a given LCD code in Theorem~\ref{thm:LCD-neighbors}.

\begin{lemma}\cite[Theorems 6.26 and 6.27]{LN08}\label{lem:solutions-quadratic-form}
	Let $q$ be odd and $Q$ a nondegenerate quadratic form over $\F_q$ in $k$ indeterminates.
	\begin{enumerate}[(a)]
		\item If $k$ is even, then for $b \in \F_q$, the number of solutions  of the equation $Q(x_1, \ldots, x_k) = b$ in $\F_q^k$ is 
			\begin{align*}
				q^{k - 1} + q^{\frac{k - 2}{2}} v(b) \Leg{{(-1)}^{\frac{k}{2}} \det(Q)}{q},
			\end{align*}
			where $v(b) := -1$ for $b \in \F_q^\times$ and $v(0) := (q - 1)$.
		\item If $k$ is odd, then for $b \in \F_q$, the number of solutions  of the equation $Q(x_1, \ldots, x_k) = b$ in $\F_q^k$ is 
			\begin{align*}
				q^{k - 1} + q^{\frac{k - 1}{2}} \Leg{{(-1)}^{\frac{k - 1}{2}} b \det(Q)}{q}.
			\end{align*}
	\end{enumerate}
\end{lemma}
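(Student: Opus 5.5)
We prove the lemma by expanding the indicator of $Q(x)=b$ with additive characters and reducing everything to quadratic Gauss sums. Throughout, write $\eta(x) := \Leg{x}{q}$, let $p$ be the characteristic of $\F_q$, and fix the canonical additive character $\psi(x) = e^{2\pi i \operatorname{Tr}(x)/p}$ of $\F_q$.

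\textbf{Step 1: diagonalize.} Since $q$ is odd, $Q$ is equivalent under an invertible linear change of variables to a diagonal form $a_1x_1^2+\cdots+a_kx_k^2$ with all $a_i \in \F_q^\times$. Such a change of variables is a bijection on $\F_q^k$, so it does not change the number of solutions of $Q(x)=b$. It multiplies $\det(Q)$ by a nonzero square, so $\eta(\det(Q)) = \eta(a_1\cdots a_k)$. It therefore suffices to treat the diagonal case.

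\textbf{Step 2: character sum expansion.} Let $N(b)$ denote the number of solutions. By orthogonality of additive characters,
\begin{align*}
	N(b) = \frac{1}{q}\sum_{t\in\F_q}\sum_{x\in\F_q^k}\psi\bigl(t(Q(x)-b)\bigr)
	= q^{k-1} + \frac{1}{q}\sum_{t\neq 0}\psi(-tb)\prod_{i=1}^k G(ta_i),
\end{align*}
where $G(c) := \sum_{y\in\F_q}\psi(cy^2)$. The term $t=0$ contributes $q^k/q = q^{k-1}$.

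\textbf{Step 3: Gauss sum facts.} We use two standard facts:
\begin{enumerate}[(i)]
	\item $G(c)=\eta(c)\,G(1)$ for every $c\neq 0$;
	\item $G(1)^2=\eta(-1)\,q$.
\end{enumerate}
For (i), note that $\#\{y : y^2 = u\} = 1+\eta(u)$, so $G(c)=\sum_u \eta(u)\psi(cu)$. Substituting $u \mapsto c^{-1}u$ gives the factor $\eta(c)$. For (ii), compute $G(1)\overline{G(1)}=q$ by expanding the double sum, and note $\overline{G(1)}=G(-1)=\eta(-1)G(1)$. We expect (ii) to be the only genuinely delicate point, and we would cite or reproduce this classical computation.

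With these facts, the product in Step 2 becomes
\begin{align*}
	\prod_{i=1}^k G(ta_i) = \eta(t)^k\,\eta(\det Q)\,G(1)^k.
\end{align*}

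\textbf{Step 4: $k$ even.} Here $\eta(t)^k=1$ and $G(1)^k=\eta(-1)^{k/2}q^{k/2}$. The remaining sum is $\sum_{t\neq0}\psi(-tb)$, which equals $q-1$ if $b=0$ and $-1$ if $b\neq 0$; this is exactly $v(b)$. Substituting gives
\begin{align*}
	N(b) = q^{k-1}+q^{\frac{k-2}{2}}\,v(b)\,\eta\bigl((-1)^{k/2}\det Q\bigr),
\end{align*}
which is part (a).

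\textbf{Step 5: $k$ odd.} Here $\eta(t)^k=\eta(t)$ and $G(1)^k=\eta(-1)^{(k-1)/2}q^{(k-1)/2}G(1)$. The remaining sum is $\sum_{t\neq0}\eta(t)\psi(-tb)$.
\begin{itemize}
	\item If $b=0$, this sum vanishes, since $\eta$ takes the values $\pm1$ equally often on $\F_q^\times$. Then $N(0)=q^{k-1}$, consistent with $\eta(0)=0$ in the formula.
	\item If $b\neq 0$, substituting $t\mapsto -b^{-1}t$ shows the sum equals $\eta(-b)G(1)$, using (i).
\end{itemize}
In the case $b\neq0$, the extra factor $G(1)\cdot\eta(-b)G(1)$ simplifies by (ii) to $\eta(-b)\eta(-1)q=\eta(b)\,q$. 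Dividing by $q$ yields
\begin{align*}
	N(b) = q^{k-1}+q^{\frac{k-1}{2}}\,\eta\bigl((-1)^{\frac{k-1}{2}}\,b\det Q\bigr),
\end{align*}
which is part (b).
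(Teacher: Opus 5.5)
Your proof is correct. The paper gives no argument of its own for this lemma; it only quotes Lidl--Niederreiter, Theorems 6.26 and 6.27, so the relevant comparison is with the textbook proof. As far as I recall it, that proof starts from the same diagonalization. It then writes the count as a convolution $N(g+h=b)=\sum_{c_1+c_2=b}N(g=c_1)\,N(h=c_2)$ and feeds in the one-variable count $1+\eta(ac)$ and the two-variable count $q+v(c)\,\eta(-a_1a_2)$, the latter obtained from the real character sum $\sum_{c}\eta(c(b-c))$. It collapses the result with the identities $\sum_{c}v(c)=0$ and $\sum_{c_1+c_2=b}v(c_1)v(c_2)=q\,v(b)$, adding variables inductively.

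Your additive-character route instead handles every $k$ in a single computation. The factorization $\prod_i G(ta_i)=\eta(t)^k\eta(\det Q)G(1)^k$ explains the even/odd dichotomy directly, through $\eta(t)^k$. It also explains the sign $\eta(-1)^{\lfloor k/2\rfloor}$, through $G(1)^2=\eta(-1)q$. You never need the value or sign of $G(1)$ itself: in the odd case the second factor of $G(1)$ comes from the twisted sum $\sum_{t\neq0}\eta(t)\psi(-tb)=\eta(-b)G(1)$.

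The cost of your route is the detour through complex exponential sums and the evaluation $|G(1)|^2=q$. The textbook argument stays with integer counts and real quadratic character sums, at the price of an induction on the number of variables.
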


\begin{lemma}\label{lem:self-orthogonal}
	Let $U\subseteq \F_q^n$ be a $k$-dimensional LCD code with generator matrix $G\in \F_q^{k\times n}$.
	\begin{enumerate}[(a)]
		\item If $q$ is even, then either $U$ contains only self-orthogonal vectors, or it contains $q^{k-1}$ many self-orthogonal vectors (including the zero vector).
		\item If $q$ is odd, then $U$ contains 
			\begin{align*}
				\begin{cases}
					q^{k - 1}
					& \text{if $k$ is odd} \\
					q^{k - 1} + \frac{q - 1}{q} \Leg{\det(Q)}{q} {\Leg{-1}{q} }^{\frac{k}{2}} q^{\frac{k}{2}}
					& \text{if $k$ is even}
				\end{cases}
			\end{align*}
			many self-orthogonal vectors (including the zero vector) where $Q := G G^\top$ represents the quadratic form in $k$ variables describing the subset of self-orthogonal vectors in $U$.
	\end{enumerate}
\end{lemma}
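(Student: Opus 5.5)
Fix a basis of $U$, i.e.\ write $U = \{xG : x \in \F_q^k\}$. Then the self-orthogonality condition for $u = xG$ reads
\begin{align*}
	\Ang{xG, xG} = x\, G G^\top x^\top = Q(x),
\end{align*}
so counting self-orthogonal vectors in $U$ amounts to counting solutions of $Q(x) = 0$ in $\F_q^k$. Since $G$ has full rank, $x \mapsto xG$ is a bijection, so the count is exact. Because $U$ is LCD, Theorem~\ref{thm:LCD-equivalence} gives that $GG^\top$ is nonsingular. Hence the quadratic form $Q(x) = xGG^\top x^\top$ is nondegenerate, at least when $q$ is odd, where quadratic forms and symmetric matrices correspond.

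For part (b), $q$ is odd and we apply Lemma~\ref{lem:solutions-quadratic-form} with $b = 0$.
\begin{itemize}
	\item If $k$ is odd, the Legendre symbol contains the factor $b = 0$, so it vanishes and the count is $q^{k-1}$.
	\item If $k$ is even, $v(0) = q-1$ and the count is
	\begin{align*}
		q^{k-1} + q^{\frac{k-2}{2}} (q-1) \Leg{{(-1)}^{\frac{k}{2}} \det(Q)}{q}.
	\end{align*}
	The Legendre symbol is multiplicative, so this symbol equals $\Leg{\det Q}{q} \Leg{-1}{q}^{k/2}$. Finally, $q^{\frac{k-2}{2}}(q-1) = \frac{q-1}{q}\, q^{k/2}$, which matches the stated formula.
\end{itemize}

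For part (a), $q$ is even, so squaring is additive and injective. Then
\begin{align*}
	Q(x) = \sum_{i,j} x_i x_j \Ang{g_i, g_j} = \sum_i x_i^2 \Ang{g_i, g_i} = \Bigl(\sum_i x_i \sqrt{\Ang{g_i, g_i}}\Bigr)^2,
\end{align*}
because the cross terms appear in pairs and cancel in characteristic $2$. So $Q$ is the square of the linear form $\ell(x) = \sum_i x_i \sqrt{\Ang{g_i,g_i}}$, and $Q(x) = 0$ if and only if $\ell(x) = 0$. There are two cases.
\begin{itemize}
	\item If $\ell$ is identically zero, every vector of $U$ is self-orthogonal.
	\item Otherwise the solution set is the kernel of a nonzero linear functional, a hyperplane of size $q^{k-1}$.
\end{itemize}
Equivalently, $u \mapsto \sqrt{\Ang{u,u}}$ is $\F_q$-linear on $U$. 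The LCD hypothesis is not actually needed for this part.

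The only delicate point is this even-characteristic step: the symmetric bilinear form is not determined by its diagonal, so one must argue through Frobenius-linearity rather than through Lemma~\ref{lem:solutions-quadratic-form}. The odd case is a direct substitution, needing only care with the sign factor $\Leg{-1}{q}^{k/2}$.
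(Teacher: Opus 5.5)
Your proof is correct and follows the paper's own argument. Part (b) is the same substitution of $b=0$ into Lemma~\ref{lem:solutions-quadratic-form}, using that $GG^\top$ is nonsingular. In part (a), your form $\ell(x)=\sum_i x_i\sqrt{\langle g_i,g_i\rangle}$ is exactly $x\mapsto\langle xG,\mathbf{1}\rangle$, because $\langle u,u\rangle=(\sum_j u_j)^2$ in characteristic $2$; that is precisely the paper's hyperplane with the all-one vector as normal, and you rightly note that the LCD hypothesis is not used there.
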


\begin{proof}
	\begin{enumerate}[(a)]
		\item In even characteristic, squaring is a linear function and so the self-orthogonal vectors form a hyperplane in $U$ given by the normal vector being the all-one vector, as long as the all-one vector is not orthogonal to $U$ itself.
		\item In odd characteristic the subset of self-orthogonal vectors form a quadratic variety, given by a non-degenerate\footnote{Since $U$ is LCD it has a basis of non-self-orthogonal vectors and hence $Q$ can be assumed to be non-degenerate.} quadratic form $Q$ in $k$ variables (since $U$ has dimension~$k$).
			Using Lemma~\ref{lem:solutions-quadratic-form} the statement follows.
			\qedhere{}
	\end{enumerate}
\end{proof}

In the following we denote by $\Proj(\F_q^k)$ the projective space of dimension $k-1$ over $\F_q$.

\begin{lemma}\label{lem:character-sum}
	Let $q$ be odd and $Q$ be a non-degenerate quadratic form in $k$ indeterminates.
	Then 
	\begin{align*}
		\sum_{x\in \Proj(\F_q^k)} \Leg{Q(x)}{q}
		= \begin{cases}
			{\Leg{-1}{q}}^{\frac{k - 1}{2}} \Leg{\det(Q)}{q} q^{\frac{k - 1}{2}}
			 & \text{if $k$ is odd}, \\
			 0
			 & \text{otherwise}.
		\end{cases}
	\end{align*}
\end{lemma}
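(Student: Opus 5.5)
The plan is to lift the sum from projective space to the affine space $\F_q^k$ and then evaluate it with the solution counts of Lemma~\ref{lem:solutions-quadratic-form}. Since $\Leg{Q(\lambda x)}{q} = \Leg{\lambda^2}{q}\Leg{Q(x)}{q} = \Leg{Q(x)}{q}$ for $\lambda \in \F_q^\times$, the summand is well defined on $\Proj(\F_q^k)$. Each projective point corresponds to exactly $q-1$ nonzero vectors, and $Q(0)=0$ contributes nothing, so
\begin{align*}
	\sum_{x\in \Proj(\F_q^k)} \Leg{Q(x)}{q} = \frac{1}{q-1} \sum_{x \in \F_q^k} \Leg{Q(x)}{q} = \frac{1}{q-1} \sum_{b \in \F_q^\times} \Leg{b}{q}\, N(b),
\end{align*}
where $N(b)$ is the number of solutions of $Q(x)=b$ in $\F_q^k$.

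Next I substitute the formulas for $N(b)$ from Lemma~\ref{lem:solutions-quadratic-form}.

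For $k$ even, $N(b) = q^{k-1} - q^{\frac{k-2}{2}}\Leg{{(-1)}^{k/2}\det(Q)}{q}$ for every $b \neq 0$. This is independent of $b$, so the sum becomes a constant times $\sum_{b\neq 0}\Leg{b}{q}$. That character sum is $0$, because $\F_q^\times$ contains equally many squares and non-squares. This gives the value $0$.

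For $k$ odd, $N(b) = q^{k-1} + q^{\frac{k-1}{2}}\Leg{{(-1)}^{\frac{k-1}{2}}\det(Q)}{q}\Leg{b}{q}$, using multiplicativity of the Legendre symbol on nonzero arguments. The $q^{k-1}$ term again gives $q^{k-1}\sum_{b\neq0}\Leg{b}{q} = 0$. In the second term, $\Leg{b}{q}^2 = 1$ for every $b\neq 0$, so it contributes $(q-1)\,q^{\frac{k-1}{2}}\Leg{{(-1)}^{\frac{k-1}{2}}\det(Q)}{q}$. Dividing by $q-1$ and writing $\Leg{{(-1)}^{\frac{k-1}{2}}}{q} = {\Leg{-1}{q}}^{\frac{k-1}{2}}$ gives exactly the claimed expression.

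There is no real obstacle here. The only points needing care are the normalization by $q-1$ when passing between $\Proj(\F_q^k)$ and $\F_q^k$, and the use of multiplicativity of the Legendre symbol. The latter is legitimate because $\det(Q)\neq 0$ (non-degeneracy) and $b \neq 0$.
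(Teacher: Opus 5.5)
Your proposal is correct and follows essentially the same route as the paper: lift the sum to $\F_q^k$ with the factor $\frac{1}{q-1}$, substitute the solution counts $N(b)$ from Lemma~\ref{lem:solutions-quadratic-form}, and use that the Legendre symbol is multiplicative and balanced on $\F_q^\times$. For even $k$ you note that $N(b)$ is constant on $\F_q^\times$, which justifies the vanishing more explicitly than the paper does, but the argument is the same.
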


\begin{proof}
	For the following, let
	\begin{align*}
		\F_q^+ := \left\{x \in \F_q \middle| \Leg{x}{q} = +1 \right\}, \\
		\F_q^- := \left\{x \in \F_q \middle| \Leg{x}{q} = -1 \right\}.
	\end{align*}

	We first show that the sum is actually well defined, i.e., independent of the chosen representative of the projective points.
	For this note that, for any $\lambda \in \F_q^\times$,
	\begin{align*}
		\Leg{Q(\lambda x)}{q}
		= \Leg{\lambda^2 Q(x)}{q}
		= \Leg{\lambda^2 }{q} \Leg{ Q(x)}{q}
		= \Leg{Q(x)}{q}.
	\end{align*}
	Next, we transform the sum over projective points into a sum over the affine vector space, using that every equivalence class of a projective point contains $q-1$ vectors which all results in the same Legendre symbol under a quadratic form, as shown above (we may choose to include the zero vector or not):
	\begin{align*}
		\sum_{x \in \Proj(\F_q^k)} \Leg{Q(x)}{q}
		= \frac{1}{q - 1} \sum_{x \in \F_q^k} \Leg{Q(x)}{q}
	\end{align*}
	The latter sum is known to be equal to zero if $k$ is even and equal to $(q - 1) {\Leg{-1}{q}}^{\frac{k - 1}{2}} q^{\frac{k - 1}{2}}$ if $k$ is odd.
	This last part follows from Lemma~\ref{lem:solutions-quadratic-form}, which implies that the number of $x$ such that $Q(x)$ is a square and the number of $x$ such that $Q(x)$ is non-square is equal if $k$ is even, and differs by $(q - 1) {\Leg{-1}{q}}^{\frac{k - 1}{2}} q^{\frac{k - 1}{2}}$ if $k$ is odd.
	We illustrate the more difficult case of odd $k$ in the following:
	\begin{align*}
		   \sum_{x\in \F_q^k} \Leg{Q(x)}{q}
		&= \sum_{b \in \F_q^\times} \sum_{x \in Q^{-1}(b)} \Leg{Q(x)}{q} \\
		&= \sum_{b \in \F_q^+} \sum_{x \in Q^{-1}(b)} \Leg{Q(x)}{q}
		 + \sum_{b \in \F_q^-} \sum_{x \in Q^{-1}(b)} \Leg{Q(x)}{q} \\
		&= \sum_{b \in \F_q^+} \left( q^{k - 1} + q^{\frac{k - 1}{2}} \Leg{{(-1)}^{\frac{k-1}{2}} b \det(Q)}{q} \right) \\
		&\quad - \sum_{b \in \F_q^-} \left( q^{k - 1} + q^{\frac{k - 1}{2}} \Leg{{(-1)}^{\frac{k - 1}{2}} b \det(Q)}{q} \right) \hspace{2cm} \text{ by Lemma~\ref{lem:solutions-quadratic-form}}\\
		&=q^{k-1}\underbrace{\left(|\F_q^+|- |\F_q^-| \right)}_0 +  q^{\frac{k - 1}{2}} \Leg{{(-1)}^{\frac{k - 1}{2}} \det(Q)}{q} \left(\sum_{b \in \F_q^+} \Leg{b}{q}- \sum_{b \in \F_q^-} \Leg{b}{q}\right) \\
		&= q^{\frac{k - 1}{2}} \Leg{{(-1)}^{\frac{k - 1}{2}} \det(Q)}{q} \sum_{b \in \F_q^\times} 1 = (q - 1) {\Leg{-1}{q}}^{\frac{k - 1}{2}} \Leg{\det(Q)}{q} q^{\frac{k - 1}{2}}.
	\end{align*}
\end{proof}

\begin{lemma}\label{lem:character-sum-odd-code}
	Let $q$ and $k$ be odd and let $C \subseteq \F_q^n$ be a $k$-dimensional LCD code with generator matrix $G$.
	Then
	\begin{align*}
		\sum_{b \in C} \Leg{\Ang{b, b}}{q}
		= \Leg{\det(G G^\top)}{q} \sum_{x \in \F_q^k} \Leg{\Ang{x, x}}{q}.
	\end{align*}
\end{lemma}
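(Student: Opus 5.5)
We parametrize $C$ by $\F_q^k$ through the generator matrix and compare the resulting quadratic form with the standard one $\Ang{x,x}=\sum_i x_i^2$ on $\F_q^k$. Every $b\in C$ is uniquely $b = xG$ with $x\in\F_q^k$, and $\Ang{b,b} = xGG^\top x^\top = Q(x)$ with $Q := GG^\top$. Since $C$ is LCD, Theorem~\ref{thm:LCD-equivalence} shows that $Q$ is nonsingular, so $Q$ is a nondegenerate quadratic form in $k$ indeterminates. Hence the left-hand side equals $\sum_{x\in\F_q^k}\Leg{Q(x)}{q}$.

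We evaluate this sum exactly as in the proof of Lemma~\ref{lem:character-sum}, without passing to projective space. Split according to the value $b=Q(x)\in\F_q^\times$ and apply Lemma~\ref{lem:solutions-quadratic-form}(b), which uses that $k$ is odd. The $q^{k-1}$ terms cancel because $|\F_q^+|=|\F_q^-|$. This leaves
\begin{align*}
	\sum_{x\in\F_q^k}\Leg{Q(x)}{q} = (q-1)\,{\Leg{-1}{q}}^{\frac{k-1}{2}}\Leg{\det(Q)}{q}\, q^{\frac{k-1}{2}}.
\end{align*}
Next we apply the same computation to the standard form $I_k$, which has determinant $1$:
\begin{align*}
	\sum_{x\in\F_q^k}\Leg{\Ang{x,x}}{q} = (q-1)\,{\Leg{-1}{q}}^{\frac{k-1}{2}} q^{\frac{k-1}{2}}.
\end{align*}
Comparing the two displays gives the claim, because $\det(Q)=\det(GG^\top)$.

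Most of this is bookkeeping. The one point to check is that Lemma~\ref{lem:solutions-quadratic-form} applies to the form $x\mapsto xQx^\top$ with the determinant of the symmetric matrix $Q$. In the source, the determinant is normalized for odd $q$ via the symmetric matrix, possibly differing from $GG^\top$ by a factor of $2^k$ or a square. We handle this by noting that the same normalization affects $Q$ and $I_k$ identically, so the correction factor cancels in the ratio. We also use that $k$ odd is needed so that the formula in part (b) applies, with $\det(Q)$ entering linearly in the Legendre symbol.

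An alternative argument avoids the counting formula altogether. Over $\F_q$ with $q$ odd, a nondegenerate quadratic form is equivalent to $\mathrm{diag}(1,\dots,1,\delta)$ with $\Leg{\delta}{q}=\Leg{\det Q}{q}$. Substituting $x_k\mapsto$ a rescaled variable, the sum factorizes as a product of one-dimensional Gauss-type sums. Since $k$ is odd, replacing $Q$ by $\delta Q$ multiplies the sum by $\Leg{\delta}{q}^k=\Leg{\delta}{q}$, and this yields the same conclusion.
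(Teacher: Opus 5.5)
Your proof is correct, but it takes a different route from the paper.

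\textbf{The paper's route.} The paper does not work with the Gram matrix of an arbitrary generator matrix. Instead it invokes the orbit classification of Theorem~\ref{thm:orbits} to reduce to two concrete representatives: $C_+$, spanned by $e_1,\dots,e_k$, and $C_-$, whose last generator is $a e_k + b e_{k+1}$ with $a^2+b^2$ a non-square. For $C_+$ the sum is literally $\sum_{x}\Leg{\Ang{x,x}}{q}$. For $C_-$ it splits off the last coordinate and kills one term with Lemma~\ref{lem:character-sum} in the even dimension $k-1$. It then counts, via Lemma~\ref{lem:solutions-quadratic-form}(a), how often $\gamma+\Ang{x,x}$ is a square or non-square. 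This shows the sum equals $-\sum_x\Leg{\Ang{x,x}}{q}$.

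\textbf{Your route.} You note that $b = xG$ turns the left-hand side into $\sum_{x}\Leg{xQx^\top}{q}$ with $Q=GG^\top$ nondegenerate. The affine computation already done in the proof of Lemma~\ref{lem:character-sum} evaluates this, for any nondegenerate $Q$ in odd dimension, as $(q-1)\Leg{-1}{q}^{(k-1)/2}\Leg{\det Q}{q}q^{(k-1)/2}$. Comparing with $Q=I_k$ finishes the proof.

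\textbf{Comparison.} Your argument is shorter and does not need the orthogonal-group orbit theorem at all. It also makes clear that the lemma is a statement about nondegenerate quadratic forms in odd dimension, not specifically about LCD codes. The paper's route instead checks the identity on explicit orbit representatives, at the cost of relying on a heavier classification result.

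\textbf{Normalization.} Your worry here is unnecessary. For odd $q$ the determinant in Lemma~\ref{lem:solutions-quadratic-form} is that of the symmetric coefficient matrix, which here is exactly $GG^\top$. Your cancellation argument would handle any such constant factor anyway.

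\textbf{Your alternative sketch.} As written it is muddled. A Legendre-symbol sum of $x_1^2+\dots+\delta x_k^2$ does not factor into one-dimensional sums unless you first expand the symbol in additive characters. Also, scaling $Q$ by $\delta$ multiplies the sum by $\Leg{\delta}{q}$ for every $k$; what actually uses $k$ odd is the determinant. The clean version is this: since $k$ is odd, $\det(\det(Q)\,I_k)=\det(Q)^k$ differs from $\det Q$ by the square $\det(Q)^{k-1}$. Hence $Q$ is congruent to $\det(Q)\,I_k$, say $Q=P\,(\det(Q) I_k)\,P^\top$, and the substitution $y=xP$ gives the identity in one line.
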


\begin{proof}
	Denote by $e_i$ the $i$th unit vector in $\F_q^n$.
	Choose $a, b \in \F_q^\times$ such that $\gamma = a^2 + b^2$ is a non-square in $\F_q$ and define the matrices
	\begin{align*}
		G_+ := \begin{bmatrix}
			e_1 \\
			e_2 \\
			\vdots \\
			e_{k - 1} \\
			e_k
		\end{bmatrix},
		\qquad
		G_- := \begin{bmatrix}
			e_1 \\
			e_2 \\
			\vdots \\
			e_{k - 1} \\
			a \cdot e_k + b \cdot e_{k + 1}
		\end{bmatrix}.
	\end{align*}
	Let $C_+$ be the LCD code generated by $G_+$ and $C_-$ be the LCD code generated by $G_-$.

	If $C \in \Orb_{\Ort_n(\F_q)}(C_+)$, there exists some $O \in \Ort_n(\F_q)$ such that $G = G_+ \cdot O$ (see Theorem~\ref{thm:orbits}) and hence
	\begin{align*}
		\sum_{b \in C} \Leg{\Ang{b, b}}{q}
		&= \sum_{x \in \F_q^k} \Leg{x \cdot G \cdot G^\top \cdot x^\top}{q} \\
		&= \sum_{x \in \F_q^k} \Leg{x \cdot G_+ \cdot O \cdot O^\top \cdot G_+^\top \cdot x^\top}{q} \\
		&= \sum_{x \in \F_q^k} \Leg{x \cdot G_+ \cdot G_+^\top \cdot x^\top}{q} \\
		&= \sum_{x \in \F_q^k} \Leg{x \cdot I_k \cdot x^\top}{q} \\
		&= \sum_{x \in \F_q^k} \Leg{\Ang{x, x}}{q} .
	\end{align*}
	The statement follows with the fact that $\det(G \cdot G^\top) = \det(G_+ \cdot O \cdot O^\top \cdot G_+^\top) = \det(I_k) = 1$.

	If $C \in \Orb_{\Ort_n(\F_q)}(C_-)$, there exists some $O \in \Ort_n(\F_q)$ such that $G = G_- \cdot O$ (see Theorem~\ref{thm:orbits}) and hence---similarly to above--- 
	\begin{align*}
		\sum_{b \in C} \Leg{\Ang{b, b}}{q}
		&= \left( \sum_{x \in \F_q^{k - 1}} \Leg{\Ang{x, x}}{q} \right) + \left( \sum_{\lambda \in \F_q^\times} \sum_{x \in \F_q^{k - 1}} \Leg{\lambda^2 (\gamma + \Ang{x, x})}{q} \right) \\
		&= \sum_{\lambda \in \F_q^\times} \sum_{x \in \F_q^{k - 1}} \Leg{\lambda^2}{q} \Leg{\gamma + \Ang{x, x}}{q} \\
		&= (q - 1) \sum_{x \in \F_q^{k - 1}} \Leg{\gamma + \Ang{x, x}}{q}
	\end{align*}
	where the second equality follows from Lemma~\ref{lem:character-sum}.
	By Lemma~\ref{lem:solutions-quadratic-form}, considering the Euclidean inner product as a quadratic form, and with 
	\begin{align*}
		\sigma_b &:= q^{k - 2} - q^{\frac{k - 3}{2}} \Leg{{(-1)}^{\frac{k - 1}{2}}}{q}, \\
		\sigma_0 &:= q^{k - 2} + (q - 1) q^{\frac{k - 3}{2}} \Leg{{(-1)}^{\frac{k - 1}{2}}}{q}
	\end{align*}
	being the number of $x\in\F_q^{k-1}$ such that $\Ang{x, x}=b \in \F_q^\times$, or such that $\Ang{x, x}=0$, respectively.
	Note that if $\Ang{x, x} = 0$, then $\Leg{\gamma + \Ang{x, x}}{q} = -1$; and if $\Ang{x, x} = -\gamma$, then $\Leg{\gamma + \Ang{x, x}}{q} = 0$.
	For all other values of $x$, because we have an equal number of nonzero squares and non-squares, we map to equal numbers of the remaining $\frac{q - 1}{2}$ squares and $\frac{q - 3}{2}$ non-squares in $\F_q^\times$.
	Finishing up:
	\begin{align*}
		\sum_{b \in C} \Leg{\Ang{b, b}}{q}
		&= (q - 1) \sum_{x \in \F_q^{k - 1}} \Leg{\gamma + \Ang{x, x}}{q} \\
		&= (q - 1) (-\sigma_0 + \sigma_b (\frac{q - 1}{2} - \frac{q - 3}{2})) \\
		&= (q - 1) (\sigma_b - \sigma_0) \\
		&= (q - 1) \left(-q \cdot q^{\frac{k - 3}{2}} \Leg{{(-1)}^{\frac{k - 1}{2}}}{q} \right) \\
		&= - (q - 1) \left(q^{\frac{k - 1}{2}} \Leg{{(-1)}^{\frac{k - 1}{2}}}{q} \right), \\
		&= - \sum_{x \in \F_q^k} \Leg{\Ang{x, x}}{q}
	\end{align*}
	where the final equality comes from Lemma~\ref{lem:character-sum}.
	The final statement follows with the fact that $\det(G \cdot G^\top) = \det(G_- \cdot O \cdot O^\top \cdot G_-)= \det(G_- \cdot  G_-^\top) = \gamma$, which is a non-square.
\end{proof}

\begin{theorem}\label{thm:LCD-neighbors}
	Let $C \subseteq \F_q^n$ be a $k$-dimensional LCD code.
	The number of LCD neighbors of $C$ is
	\begin{align*}
		\begin{cases}
			\frac{q - 1}{q} N - {\Leg{-1}{q}}^{\frac{n}{2}} q^{\frac{n}{2} - 1}
			& \text{if $q$ is odd, $n$ is even, and $k$ is odd,} \\
			\frac{q - 1}{q} N
			& \text{otherwise,}
		\end{cases}
	\end{align*}
	where
	\begin{align*}
		N := \frac{(q^k - 1)(q^{n - k + 1} - q)}{{(q - 1)}^2}
	\end{align*}
	is the total number of neighbors of $C$.
\end{theorem}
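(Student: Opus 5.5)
The plan is to count the neighbors of $C$ through their intersection with $C$. Every neighbor has the form $C' = U + \langle v\rangle$, where $U$ is a hyperplane of $C$ and $v \notin C$. As in the proof of Proposition~\ref{prop:all-neighbors}, for each $U$ the neighbors through $U$ are in bijection with the one-dimensional subspaces of $\F_q^n/U$ other than $C/U$. Since $C$ is LCD, the form $\Ang{\cdot,\cdot}$ is nondegenerate on $C$. Hence every hyperplane of $C$ is $U_x := x^\perp \cap C$ for a unique projective point $\langle x\rangle$ of $C$, and the radical $U_x \cap U_x^\perp$ is $\langle x\rangle$ if $\Ang{x,x}=0$ and $\{0\}$ otherwise. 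So I split the hyperplanes into two types and count the LCD neighbors through each $U_x$ separately.

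\emph{Type 1: $x$ self-orthogonal (so $U_x$ is not LCD).} For $C' = U_x + \langle v\rangle$, write the Gram matrix of $C'$ in a basis $x, w_1,\dots,w_{k-2}, v$, where $W = \langle w_i\rangle$ is a nondegenerate complement of $\langle x\rangle$ in $U_x$. After orthogonally projecting $v$ off $W$, the Gram matrix reduces to $\begin{bmatrix}0 & a\\ a & *\end{bmatrix}$ with $a = \Ang{x,v}$. Therefore $C'$ is LCD iff $\Ang{x,v}\neq 0$. The map $v \mapsto \Ang{x,v}$ is a well-defined nonzero functional on $\F_q^n/U_x$ (nonzero because $x \notin C^\perp$). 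Its non-kernel lines number $q^{n-k}$, and $C/U_x$ is one of them. This gives $q^{n-k}-1$ LCD neighbors through $U_x$.

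\emph{Type 2: $\Ang{x,x}\neq 0$ (so $U_x$ is LCD).} Then $\F_q^n = U_x \oplus U_x^\perp$ with $U_x^\perp = \langle x\rangle \oplus C^\perp$ nondegenerate of dimension $n-k+1$. Replacing $v$ by its $U_x^\perp$-component $w$, the code $C'$ is LCD iff $\Ang{w,w}\neq 0$. The excluded line $C/U_x$ corresponds to $\langle x\rangle$, which is non-isotropic. So the count is the number of non-isotropic projective points of $U_x^\perp$, minus $1$.
\begin{itemize}
\item For $q$ even, Lemma~\ref{lem:self-orthogonal}(a) gives this count directly.
\item For $q$ odd, Lemma~\ref{lem:self-orthogonal}(b) expresses it in terms of $\Leg{\det}{q}$ of the form on $U_x^\perp$. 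This discriminant equals $\Ang{x,x}\cdot\det(C^\perp)$, so its sign is $\Leg{\Ang{x,x}}{q}\Sign(C)$ by Proposition~\ref{prop:dual-sign}.
\end{itemize}

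Summing, the total is
\[
(\#\text{isotropic points of } C)\,(q^{n-k}-1) \;+\; \sum_{\langle x\rangle:\,\Ang{x,x}\neq 0}\Bigl(\#\text{non-isotropic points of } U_x^\perp - 1\Bigr).
\]
The number of isotropic points of $C$ again comes from Lemma~\ref{lem:self-orthogonal}, applied to $C$ itself. In the odd case the Type 2 sum splits into a main term, namely the number of non-isotropic points of $C$ times a constant, plus a correction. The correction is nonzero only when $n-k+1$ is even, i.e.\ $n-k$ odd, and it equals a constant times $\Sign(C)\sum_{x\in\Proj(C)}\Leg{\Ang{x,x}}{q}$. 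That sum is evaluated by Lemma~\ref{lem:character-sum} together with Lemma~\ref{lem:character-sum-odd-code} (or equivalently Lemma~\ref{lem:character-sum} with $\det Q = \det(GG^\top)$). The result is $\Sign(C){\Leg{-1}{q}}^{\frac{k-1}{2}}\Sign(C) q^{\frac{k-1}{2}}$ when $k$ is odd, and $0$ otherwise.

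The main obstacle is this final bookkeeping. One must check the following.
\begin{itemize}
\item The $\Sign(C)$ factors cancel, since they appear squared, which explains why the answer is independent of the orbit.
\item In the case where $k$ is even and $n-k$ is odd (so $n$ is odd), the isotropic-point term and the correction term combine to give exactly $\frac{q-1}{q}N$.
\item A nonzero defect survives precisely when $k$ and $n-k$ are both odd, i.e.\ $n$ even and $k$ odd. There the powers $q^{\frac{n-k-1}{2}}\cdot q^{\frac{k-1}{2}}$ combine to $q^{\frac n2 -1}$, and the Legendre factors multiply to ${\Leg{-1}{q}}^{\frac n2}$.
\end{itemize}
I would carry out the even-$k$ cases first as a sanity check against $\frac{q-1}{q}N$, and only then track the signs carefully in the exceptional case.
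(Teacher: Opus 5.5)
Your proposal is correct and is essentially the paper's own proof. It uses the same decomposition over hyperplanes $U\le C$, split into LCD and non-LCD ones. The per-hyperplane counts are the same: $q^{n-k}-1$, resp.\ the number of non-isotropic points of $U^\perp$ minus one via Lemma~\ref{lem:self-orthogonal}. The character-sum evaluation also matches, using $U^\perp=C^\perp\oplus\langle x\rangle$ and Proposition~\ref{prop:dual-sign}.

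The deviations are only streamlinings. You handle the case $n$ odd, $k$ even directly, where the paper passes to the dual via Proposition~\ref{prop:dual-neighbors}; there your correction term simply vanishes because the character sum is $0$ for even $k$, so nothing needs to ``combine''. You also apply Lemma~\ref{lem:character-sum} to $Q=GG^\top$ directly instead of going through the orbit-based Lemma~\ref{lem:character-sum-odd-code}.
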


\begin{proof}
	Throughout this proof we denote by $G_U$ a generator matrix of some subspace $U$, and by $Q_U:=G_U \cdot G_U^\top$ the representation of the corresponding quadratic form.
	As before, $C$ has $\frac{q^k - 1}{q - 1}$ many $(k - 1)$-dimensional subspaces that can function as the possible intersection with a neighbor.
	Each of these smaller subspaces, say $U$, can be extended to a (distinct) $k$-dimensional subspace with a one-dimensional subspace of the quotient space $\F_q^n / U$, out of which one will lead to $C$.
	Now we distinguish two cases:
	\begin{itemize}
		\item \textbf{$\mathbf{U}$ is LCD:}
			In this case, we get that $U^\perp \cong \F_q^n / U$ and only a self-orthogonal one-dimensional subspace will lead to a non-LCD neighbor.
			Vice-versa, the non-self-orthogonal projective points lead to LCD codes.
			Applying Lemma~\ref{lem:self-orthogonal}, we get
			\begin{align*}
				\begin{cases}
						\frac{q^{n - k + 1} - 1}{q - 1}
					  - \frac{q^{n - k} - 1}{q - 1} - 1
					  = q^{n - k} - 1
					  &\text{if $q$ even or $n - k$ even}, \\
						q^{n - k} - 1 - \Leg{\det(Q_U)}{q}
						{\Leg{-1}{q}}^{(n - k + 1) / 2}
						q^{(n - k - 1) / 2}
					  &\text{otherwise},
				   \end{cases}
			\end{align*}
			many cosets which give rise to an LCD neighbor where $Q_U$ denotes the quadratic form in $n - k + 1$ variables describing the self-orthogonal vectors in $U^\perp$.
			Here the second cardinality above derives from the number of points in $U^\perp$ minus the number of self-orthogonal points and minus one for the coset leading back to $C$, that is,
			\begin{align*}
				&\frac{q^{n - k + 1} -1}{q - 1}
				- \frac{%
					q^{n - k} + \frac{q - 1}{q}
					\Leg{\det(Q_U)}{q}
					{\Leg{-1}{q}}^{(n - k + 1)/2}
					q^{(n - k + 1) / 2} - 1
				}{q - 1} - 1.
			\end{align*}
		  \item \textbf{$\mathbf{U}$ is not LCD:}
			In this case all cosets corresponding to an element in $U^\perp \cap (\F_q^n / U)$ (note that this intersection must have dimension $n - k$) lead to a non-LCD neighbor.
			Hence we have
			\begin{align*}
				\frac{q^{n - k + 1} - 1}{q - 1} - \frac{q^{n - k} - 1}{q - 1} - 1
				= q^{n - k} - 1
			\end{align*}
			many cosets that give rise to an LCD neighbor.
	\end{itemize}
	If $q$ even or $n - k$ even, we therefore have
	\begin{align*}
		\frac{(q^k - 1) (q^{n - k} - 1)}{q - 1} = \frac{q - 1}{q} N
	\end{align*}
	distinct LCD neighbors of $C$.
	In the case that both $q$ and $n - k$ are odd, for the case that $n$ is odd and $k$ is even, by Proposition~\ref{prop:dual-neighbors}, we can take the dual of $C$ and consider the case with $n$ odd and $k$ odd (and $n - k$ even) which is already proven.

	Therefore, we have only the case remaining where $q$ is odd, $n$ is even, and $k$ is odd.
	Note that in this case, if $G$ is a generator matrix for $C$ and $H$ is a parity check matrix for $C$, we then have that $U^\perp$ is generated by $G_{U^\perp}=\begin{bmatrix} H \\ b \end{bmatrix}$ for some $b \in C$ because $C$ is LCD\@.
	Furthermore, up to scalar multiples, the subcodes obtained in this way are unique.
	We get that $bH^\top = 0$ and thus, with Proposition~\ref{prop:dual-sign}, 
	\begin{align*}
		\Leg{\det(Q_U)}{q}
		= \Leg{\det(Q_{U^\perp})}{q}
		&= \Leg{\det(H \cdot H^\top) \cdot \Ang{b, b}}{q} \\
		&= \Leg{\det(G \cdot G^\top)}{q} \Leg{\Ang{b, b}}{q}.
	\end{align*}

	It then follows that
	\begin{align*}
		\sum_{\substack{
			U \leq C, \\
			\dim(U) = k - 1
		}} \Leg{\det(Q_U)}{q}
		&= \frac{1}{q - 1} \sum_{b \in C} \Leg{\det(Q_U)}{q} \\
		&= \frac{1}{q - 1} \Leg{\det(G \cdot G^\top)}{q} \sum_{b \in C} \Leg{\Ang{b, b}}{q} \\
		&= \frac{1}{q - 1} {\Leg{\det(G \cdot G^\top)}{q}}^2
		\sum_{x \in \F_q^k} \Leg{\Ang{x, x}}{q} \hspace{1.1cm} \text{by Lemma~\ref{lem:character-sum-odd-code}}\\
		&= \frac{1}{q - 1} \sum_{x \in \F_q^k} \Leg{\Ang{x, x}}{q} \\
		&= \sum_{x \in \Proj(\F_q^k)} \Leg{\Ang{x, x}}{q} \\
		&= {\Leg{-1}{q}}^{\frac{k - 1}{2}} q^{\frac{k - 1}{2}}\hspace{4.8cm} \text{by Lemma~\ref{lem:character-sum} }.
	\end{align*}

	Now, counting the number of LCD neighbors, we get:
	\begin{align*}
		\sum_{\substack{
			U \leq C, \\
			\dim(U) = k - 1, \\
			\text{$U$ not LCD}
		}}
		&(q^{n - k} - 1)
		+ \sum_{\substack{
			U \leq C, \\
			\dim(U) = k - 1, \\
			\text{$U$ LCD}
		}}
		\left(
			q^{n - k} - 1 -
			\Leg{\det(Q_U)}{q}
			{\Leg{-1}{q}}^{\frac{n - k + 1}{2}}
			q^{\frac{n - k - 1}{2}}
		\right) \\
		&= \frac{(q^k - 1)(q^{n - k} - 1)}{q - 1}	 
		- \sum_{\substack{
			U \leq C, \\
			\dim(U) = k - 1, \\
			\text{$U$ LCD}
		}}
		\Leg{\det(Q_U)}{q}
		{\Leg{-1}{q}}^{\frac{n - k + 1}{2}}
		q^{\frac{n - k - 1}{2}} \\
		&= \frac{(q^k - 1) (q^{n - k} - 1)}{q - 1}	 
		- \sum_{\substack{
			U \leq C, \\
			\dim(U) = k - 1
		}}
		\Leg{\det(Q_U)}{q}
		{\Leg{-1}{q}}^{\frac{n - k + 1}{2}}
		q^{\frac{n - k - 1}{2}} \\
		&= \frac{(q^k - 1) (q^{n - k} - 1)}{q - 1}	 
		- {\Leg{-1}{q}}^{\frac{k - 1}{2}}
		q^{\frac{k - 1}{2}}
		{\Leg{-1}{q}}^{\frac{n - k + 1}{2}}
		q^{\frac{n - k - 1}{2}} \\
		&= \frac{(q^k - 1) (q^{n - k} - 1)}{q - 1}	 
		- {\Leg{-1}{q}}^{\frac{n}{2}}
		q^{\frac{n}{2} - 1}.
	\end{align*}
	Note that for the second equality, we can add the non-LCD codes to the sum since their corresponding determinant of $Q_U$ is equal to zero.
\end{proof}

\begin{example}
	Let $C\subseteq \F_3^6$ be the row space of
	\begin{align*}
		G = \begin{bmatrix}
			1 & 0 & 0 & 0 & 0 & 0 \\
			0 & 1 & 0 & 0 & 0 & 0 \\
			0 & 0 & 1 & 0 & 0 & 0
		\end{bmatrix},
	\end{align*}
	i.e., $C$ is an LCD code with a possible parity check matrix 
	\begin{align*}
		H = \begin{bmatrix}
			0 & 0 & 0 & 1 & 0 & 0 \\
			0 & 0 & 0 & 0 & 1 & 0 \\
			0 & 0 & 0 & 0 & 0 & 1
		\end{bmatrix}.
	\end{align*}
	There are $(3^3 - 1) / 2 = 13$ two-dimensional subspaces of $C$, out of which the ones with a weight-1 basis vectors in their reduced row echelon form are LCD (the other four subspaces are not LCD).

	As an example with an LCD subcode we choose 
	\begin{align*}
		U := \Ang{\begin{bmatrix}
			1 & 0 & 0 & 0 & 0 & 0\\
			0 & 1 & 0 & 0 & 0 & 0
		\end{bmatrix}}.
	\end{align*}
	Then 
	\begin{align*}
		U^\perp= \Ang{\begin{bmatrix}
			0 & 0 & 1 & 0 & 0 & 0\\
			0 & 0 & 0 & 1 & 0 & 0\\
			0 & 0 & 0 & 0 & 1 & 0\\
			0 & 0 & 0 & 0 & 0 & 1
		\end{bmatrix}}
	\end{align*}
	and we can choose the $(3^4 - 1) / 2 = 40$ one-dimensional subspaces as representatives of $\F_3^6/U$.
	Those of Hamming weight $3$ are self-orthogonal and lead to non-LCD neighbors of $C$:
	\begin{align*}
		&(0,0,1,1,1,0), &&(0,0,1,2,1,0), &&(0,0,1,1,2,0), &&(0,0,1,2,2,0), \\
		&(0,0,1,0,1,1), &&(0,0,1,0,1,2), &&(0,0,1,0,2,1), &&(0,0,1,0,2,2), \\
		&(0,0,0,1,1,1), &&(0,0,0,1,1,2), &&(0,0,0,1,2,1), &&(0,0,0,1,2,2), \\
		&(0,0,1,1,0,1), &&(0,0,1,2,0,1), &&(0,0,1,1,0,2), &&(0,0,1,2,0,2).
	\end{align*}
	Note that these are the $16$ corresponding projective points to the $32$ (non-zero) non-projective points that are self-orthogonal and which corresponds exactly to the number from Lemma~\ref{lem:self-orthogonal}.
	The remaining $40 - 16 - 1 = 23$ give rise to LCD neighbors.
	
	As an example with a non-LCD subcode we choose 
	\begin{align*}
		U:= \Ang{\begin{bmatrix}
			1 & 0 & 1 & 0 & 0 & 0\\
			0 & 1 & 1 & 0 & 0 & 0
		\end{bmatrix}}.
	\end{align*}
	Then 
	\begin{align*}
		U^\perp= \Ang{\begin{bmatrix}
			1 & 1 & 2 & 0 & 0 & 0\\
			0 & 0 & 0 & 1 & 0 & 0\\
			0 & 0 & 0 & 0 & 1 & 0\\
			0 & 0 & 0 & 0 & 0 & 1
		\end{bmatrix}},
	\end{align*}
	and whatever we choose as $\F_3^6 / U$ will intersect $U^\perp$ in $(0 \mid I_3)$ which contains $(3^3 - 1) / 2 = 13$ one-dimensional subspaces.
	Choosing any of those will lead to non-LCD neighbors of $C$.
	The remaining $(3^4 - 1) / 2 - 13 - 1 = 3^3 - 1 = 26$ give rise to LCD neighbors of $C$.
\end{example}

Theorem~\ref{thm:LCD-neighbors} shows that (for large field sizes) most of the neighbors of an LCD code are LCD themselves:
 
\begin{corollary}
	For $q = 2$ exactly half of the neighbors of any LCD code are themselves LCD codes.
	For growing $q$ the fraction of neighbors of an LCD code that are themselves LCD codes approaches $1$.
\end{corollary}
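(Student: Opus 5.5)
We read both claims off Theorem~\ref{thm:LCD-neighbors} by computing the ratio of LCD neighbors to all neighbors $N$.

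For $q=2$: the exceptional case in Theorem~\ref{thm:LCD-neighbors} requires $q$ odd, so it never applies. Hence the number of LCD neighbors is $\frac{q-1}{q}N = \frac{1}{2}N$, which is exactly half of the $N$ neighbors counted in Proposition~\ref{prop:all-neighbors}. Since $N$ does not depend on the particular LCD code $C$, the statement holds for every LCD code.

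For growing $q$ we fix $n$ and $k$ and consider two cases.
\begin{itemize}
\item Outside the exceptional case, the ratio equals $\frac{q-1}{q} = 1-\frac{1}{q}$, which tends to $1$.
\item In the exceptional case ($q$ odd, $n$ even, $k$ odd), the ratio is
\begin{align*}
\frac{q-1}{q} - {\Leg{-1}{q}}^{\frac{n}{2}}\frac{q^{\frac{n}{2}-1}}{N}.
\end{align*}
The first term tends to $1$, so we need the correction term to tend to $0$.
\end{itemize}

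To control the correction term, note that $q^k - 1 \ge q^{k-1}(q-1)$ and $q^{n-k+1} - q = q(q^{n-k}-1) \ge q^{n-k}(q-1)$. Therefore
\begin{align*}
N = \frac{(q^k-1)(q^{n-k+1}-q)}{(q-1)^2} \ge q^{n-1}.
\end{align*}
Consequently the correction term is bounded in absolute value by $q^{\frac{n}{2}-1}/q^{n-1} = q^{-\frac{n}{2}}$, which tends to $0$ as $q\to\infty$.

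The only point that needs care is that the sign factor ${\Leg{-1}{q}}^{\frac{n}{2}}$ can oscillate with $q$. This causes no problem, because the correction term is bounded in absolute value independently of that sign. In both cases the ratio therefore tends to $1$.
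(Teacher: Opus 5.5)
Your proposal is correct and follows the paper's route: the paper gives no separate proof and reads the corollary directly off Theorem~\ref{thm:LCD-neighbors}, where the ratio is $\frac{q-1}{q}$ outside the case $q$ odd, $n$ even, $k$ odd. In that exceptional case, your estimate $N \geq q^{n-1}$ bounds the correction term by $q^{-n/2}$, which supplies the limit argument the paper leaves implicit.
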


In the following two sections we will derive results about the LCD neighbor graph, first about the binary case, and afterwards about the odd $q$ case. 
Throughout both sections we will denote by $\Gq_q(n, k)$ the graph whose vertices are all ${[n, k]}_q$ LCD codes and whose edges represent the neighbor relation, i.e., two nodes are connected by an edge if and only if they are neighbors.
In other words, $\Gq_q(n, k) = \Jq_q(n, k)[\LCDq{n}{k}]$.


\section{Structure of binary LCD neighbor graphs}\label{sec:Structureq2}

We will now analyze the binary LCD neighbor graph in more detail. The first result follow straightforwardly from Theorem~\ref{thm:LCD-neighbors}.

\begin{corollary}\label{cor:regular_binary}
	The LCD neighbor graph $\Gq_2(n, k)$ is regular of degree 
	\begin{align*}
		r := (2^k-1)(2^{n-k}-1).
	\end{align*}
\end{corollary}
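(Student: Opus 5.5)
The plan is to specialize Theorem~\ref{thm:LCD-neighbors} to $q=2$. Since $q=2$ is even, the exceptional case there (which requires $q$ odd) does not apply. So every ${[n,k]}_2$ LCD code $C$ has exactly $\frac{q-1}{q}N$ LCD neighbors, where $N$ is the total number of neighbors from Proposition~\ref{prop:all-neighbors}. This count depends only on $n$ and $k$, not on the choice of $C$. Because $\Gq_2(n,k)=\Jq_2(n,k)[\LCDq{n}{k}]$ is the induced subgraph on LCD codes, the degree of $C$ in $\Gq_2(n,k)$ is exactly this number of LCD neighbors. Hence the graph is regular.

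It remains to evaluate the degree. With $q=2$ we have $(q-1)^2=1$, so
\begin{align*}
	N = (2^k-1)(2^{n-k+1}-2) = 2(2^k-1)(2^{n-k}-1),
\end{align*}
and therefore $\frac{1}{2}N = (2^k-1)(2^{n-k}-1) = r$. Equivalently, one can read off the intermediate expression in the proof of Theorem~\ref{thm:LCD-neighbors}, which gives $\frac{(q^k-1)(q^{n-k}-1)}{q-1}$ and, at $q=2$, is directly $r$.

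There is no real obstacle here. The only points to check are that the even-$q$ branch of Theorem~\ref{thm:LCD-neighbors} (via Lemma~\ref{lem:self-orthogonal}(a)) covers $q=2$ for all parities of $n$ and $k$, and that the degree in the induced subgraph coincides with the LCD-neighbor count. As an alternative consistency check, regularity within each orthogonal-group orbit also follows from Theorem~\ref{thm:biregular}, but the direct count is what gives regularity of the whole graph across orbits.
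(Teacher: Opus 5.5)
Your proposal is correct and follows the paper's own route: the corollary comes from specializing Theorem~\ref{thm:LCD-neighbors} to $q=2$. There the non-exceptional branch gives every LCD code exactly $\frac{1}{2}N = (2^k-1)(2^{n-k}-1)$ LCD neighbors, independent of $C$, which is precisely the degree in the induced subgraph. Your arithmetic check of $N$ at $q=2$ and your remarks on the even-$q$ branch are accurate; the paper simply treats these as immediate.
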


Next we show the connectedness of the graph:

\begin{theorem}
	The LCD neighbor graph $\Gq_2(n, k)$ is connected.
\end{theorem}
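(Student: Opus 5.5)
The plan is to use the spectral criterion stated after Proposition~\ref{prop:spec-connect}. By Corollary~\ref{cor:regular_binary}, $\Gq_2(n,k)$ is $r$-regular with $r=(2^k-1)(2^{n-k}-1)$. So it suffices to show $\lambda_2(\Gq_2(n,k)) < r$. Then $r$ is a simple eigenvalue, and the graph is connected.

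To bound $\lambda_2$, I would use that $\Gq_2(n,k)=\Jq_2(n,k)[\LCDq{n}{k}]$ is an induced subgraph of the Grassmann graph. Applying the interlacing Theorem~\ref{thm:interlacing} with $i=2$ gives $\lambda_2(\Gq_2(n,k))\le \lambda_2(\Jq_2(n,k))$. If $\Gq_2(n,k)$ has only one vertex, there is nothing to prove; otherwise $\lambda_2$ is defined and interlacing applies.

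Next I would read off $\lambda_2(\Jq_2(n,k))$ from Theorem~\ref{thm:Jq-char} as the root of the factor with $j=1$. This factor is present because $1\le k\le n-1$ gives $\min(k,n-k)\ge 1$. With $q=2$,
\begin{align*}
\lambda_2(\Jq_2(n,k)) = 4(2^{k-1}-1)(2^{n-k-1}-1) - 1 = (2^k-2)(2^{n-k}-2)-1 .
\end{align*}

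The remaining step is the elementary comparison
\begin{align*}
r-\lambda_2(\Jq_2(n,k)) = (2^k-1)(2^{n-k}-1)-(2^k-2)(2^{n-k}-2)+1 = 2^k+2^{n-k}-2 > 0 .
\end{align*}
Hence $\lambda_2(\Gq_2(n,k))<r$, and Proposition~\ref{prop:spec-connect} yields connectedness.

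The only delicate point is making sure the indexing of eigenvalues matches. The $j=1$ root must really be the second largest eigenvalue of $\Jq_2(n,k)$; the statement of Theorem~\ref{thm:Jq-char} asserts this, and one can check that the roots decrease in $j$. Interlacing must then be applied with the nonincreasing ordering used in the paper. Beyond that, the argument is a direct computation.
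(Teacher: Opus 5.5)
Your proposal is correct and is essentially the paper's own proof. Both use the regularity degree $r=(2^k-1)(2^{n-k}-1)$, interlacing against the Grassmann graph $\Jq_2(n,k)$, the value of $\lambda_2(\Jq_2(n,k))$ from the $j=1$ factor, and the computation $r-\lambda_2(\Jq_2(n,k)) = 2^k+2^{n-k}-2>0$. Your extra remarks on the one-vertex case and on the $j=1$ factor being present are harmless refinements.
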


\begin{proof}
	We show that since $\Gq_2(n, k)$ is an induced subgraph of $\Jq_2(n, k)$ and is $r$-regular (with $r$ as in Corollary~\ref{cor:regular_binary}), by Proposition~\ref{prop:spec-connect} and by Theorem~\ref{thm:interlacing}, it must be connected if $r > \lambda_2(\Jq_2(n, k))$.
	By Theorem~\ref{thm:Jq-char}, 
	\begin{align*}
		r -  \lambda_2(\Jq_2(n, k))
		&= (2^k - 1)(2^{n - k } - 1) - 2^2 (2^{k - 1} - 1)(2^{n - k - 1} - 1) + 1 \\
		&= 2^{n - k + 1} - 2^{n - k} + 2^{k + 1} - 2^{k} - 2 \\
		&= \underbrace{2^{k}}_{\geq 2} + \underbrace{2^{n - k }}_{\geq 2} - 2
		> 0.
	\end{align*}
\end{proof}

Following the notation of~\cite{CMTQ18}, we define $\LCDoo{n}{k}$, $\LCDoe{n}{k}$, $\LCDeo{n}{k}$, and $\LCDee{n}{k}$ to be the sets of ${[n, k]}_2$ LCD codes which are (respectively) odd-like with odd-like dual, odd-like with even-like dual, even-like with odd-like dual, and even-like with even-like dual (which is necessarily empty).
We give the following method of classifying binary LCD codes into these sets:

\begin{proposition}\label{prop:ones}
	Let $C$ be an ${[n, k]}_2$ LCD code and let $w = (1, \ldots, 1) \in \F_2^n$.
	Then
	\begin{enumerate}[(a)]
		\item $C \in \LCDoo{n}{k}$ if and only if $w \notin C$ and $w \notin C^{\perp}$,
		\item $C \in \LCDoe{n}{k}$ if and only if $w \in C$ and $w \notin C^{\perp}$,
		\item $C \in \LCDeo{n}{k}$ if and only if $w \notin C$ and $w \in C^{\perp}$.
	\end{enumerate}
\end{proposition}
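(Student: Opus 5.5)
The plan is to translate ``even-like'' into a statement about the all-one vector $w$ and then use the LCD decomposition to see which combinations can occur. For a binary code $D$, being even-like means $\sum_i c_i = \Ang{c, w} = 0$ for every $c \in D$. This is exactly $w \in D^\perp$. Hence $C$ is even-like if and only if $w \in C^\perp$. Likewise, $C^\perp$ is even-like if and only if $w \in (C^\perp)^\perp = C$. The equality $(C^\perp)^\perp = C$ is the standard double-dual property in finite dimension.

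Each case is then a direct translation. The code $C$ lies in $\LCDoo{n}{k}$ precisely when neither $C$ nor $C^\perp$ is even-like, i.e.\ $w \notin C^\perp$ and $w \notin C$; this is (a). It lies in $\LCDoe{n}{k}$ precisely when $C$ is odd-like and $C^\perp$ is even-like, i.e.\ $w \notin C^\perp$ and $w \in C$; this is (b). It lies in $\LCDeo{n}{k}$ precisely when $C$ is even-like and $C^\perp$ is odd-like, i.e.\ $w \in C^\perp$ and $w \notin C$; this is (c).

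The only remaining point is that these three cases are exhaustive, so that the classification is complete. The fourth combination would require $w \in C \cap C^\perp$. Since $w \neq 0$, this is impossible for an LCD code, which is why $\LCDee{n}{k}$ is empty. I expect no real obstacle here. The one step needing care is the identification ``even-like $\iff w \in D^\perp$''. It holds because $\Ang{c,w}$ is the coordinate sum computed in $\F_2$, which is exactly the condition in the paper's definition of even-like.
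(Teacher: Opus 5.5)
Your proof is correct and is essentially the paper's argument: since $\Wt(c) \equiv \Ang{c, w} \pmod{2}$, a binary code is even-like exactly when $w$ lies in its dual, so each case is a direct translation, and $\LCDee{n}{k}=\emptyset$ because $w \in C \cap C^\perp$ would contradict the LCD property. The only difference is that you spell out the double-dual step $(C^\perp)^\perp = C$ needed to handle the parity of $C^\perp$, which the paper leaves implicit.
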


\begin{proof}
	A code $C$ is even-like if and only if $c$ has even weight for all $c \in C$.
	Since
	\begin{align*}
		\Wt(c) = \sum_{i = 1}^n c_i = \Ang{c, w}  \pmod{2}
	\end{align*}
	we have that $C$ is even-like if and only if $w \in C^{\perp}$.
	It then additionally follows that $\LCDee{n}{k}$ is necessarily empty since if both $C$ and $C^{\perp}$ are even, $w \in C \cap C^{\perp}$ but then $C$ cannot be LCD\@.
\end{proof}

We show that codes in $\LCDoo{n}{k}$ are always neighbors with codes in the other two orbits when those orbits are non-empty.

\begin{proposition}
	Consider the orbit decomposition of Theorem~\ref{thm:orbits}.
	\begin{enumerate}[(a)]
		\item If $n - k$ is even, then any element of $\Orb_{\Ort_n(\F_2)}(\CodeCoo)$ has a neighbor on $\Orb_{\Ort_n(\F_2)}(\CodeCoe)$.
		\item If $k$ is even, then any element of $\Orb_{\Ort_n(\F_2)}(\CodeCoo)$ has a neighbor on $\Orb_{\Ort_n(\F_2)}(\CodeCeo)$.
	\end{enumerate}
\end{proposition}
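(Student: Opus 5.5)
We plan to prove (b) by an explicit construction and then deduce (a) from (b) by duality.

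\emph{Reduction of (a) to (b).} Suppose $n-k$ is even and $C\in\Orb_{\Ort_n(\F_2)}(\CodeCoo)$. Then $C^\perp$ is an $[n,n-k]_2$ LCD code, and by Proposition~\ref{prop:ones} it again has $w\notin C^\perp$ and $w\notin C$. Since $n-k$ is even, (b) applied to $C^\perp$ gives an LCD neighbor $D$ of $C^\perp$ with $w\in D^\perp$ and $w\notin D$. By Proposition~\ref{prop:dual-neighbors}, $D^\perp$ is a neighbor of $C$. It is LCD by Theorem~\ref{thm:LCD-equivalence}, and it satisfies $w\in D^\perp$ and $w\notin D$, so $D^\perp\in\Orb_{\Ort_n(\F_2)}(\CodeCoe)$ by Proposition~\ref{prop:ones}.

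\emph{Proof of (b).} Let $k$ be even, $C\in\LCDoo{n}{k}$, and decompose $w=w_C+w_{C^\perp}$. Over $\F_2$ we have $\Ang{x,x}=\Ang{x,w}$, so for $c\in C$ we get $\Ang{c,c}=\Ang{c,w_C}$. Thus $w_C$ is the characteristic vector of the nondegenerate form on $C$. Since $C$ is odd-like, it has an orthonormal basis $e_1,\dots,e_k$, and then $w_C=\sum_i e_i$ with $\Ang{w_C,w_C}=k\equiv 0 \pmod 2$.

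Now set $U:=C\cap w^\perp=C\cap w_C^\perp$. This is a hyperplane of $C$ because $w_C\neq 0$. It contains $w_C$, which lies in the radical of $U$, so the Gram matrix of $U$ is singular. Choose $c_0\in C$ of odd weight, so $c_0\notin U$ and $C=U\oplus\Ang{c_0}$. Choose $y\in C^\perp$ of odd weight, which exists since $C^\perp$ is odd-like. Define
\begin{align*}
C':=U+\Ang{c_0+y}.
\end{align*}

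\emph{Verification.} We check four things.
\begin{enumerate}[(i)]
\item $C\cap C'=U$ because $y\neq 0$ and $y\notin C$. Hence $C'$ has dimension $k$ and is a neighbor of $C$.
\item $C'$ is even-like. Indeed $U\perp w$, and $\Ang{c_0+y,w}=1+1=0$, so $w\in C'^\perp$.
\item $C'$ is LCD. Fix a basis $u_1,\dots,u_{k-1}$ of $U$. Since $y\perp C$, the Gram matrix of $C'$ with respect to $u_1,\dots,u_{k-1},c_0+y$ agrees with the Gram matrix of $C$ with respect to $u_1,\dots,u_{k-1},c_0$, except in the last diagonal entry. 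That entry changes by $\Ang{y,y}=1$. Expanding along the last row, the determinant changes by the cofactor $\det(\mathrm{Gram}(U))=0$. Hence $\det$ of the Gram matrix of $C'$ equals $\det(GG^\top)\neq 0$, and $C'$ is LCD by Theorem~\ref{thm:LCD-equivalence}.
\item $w\notin C'$, since otherwise $w\in C'\cap C'^\perp$, contradicting (iii).
\end{enumerate}
By Proposition~\ref{prop:ones}, $C'\in\LCDeo{n}{k}=\Orb_{\Ort_n(\F_2)}(\CodeCeo)$.

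The main obstacle is step (iii). A naive orthogonal-sum construction fails precisely because $U$ is degenerate when $k$ is even. The key is to perturb the complementary vector $c_0$ by an odd-weight vector of $C^\perp$ and exploit the vanishing cofactor $\det(\mathrm{Gram}(U))=0$, which is exactly what keeps the determinant nonzero.
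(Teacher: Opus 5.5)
Your proof is correct. It reaches the result by a different route from the paper's.

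The paper fixes representatives. It takes $\MatGoo=(I_k\mid 0)$ and a second generator matrix sharing the rows $e_1,\dots,e_{k-1}$: the last row is $(0,\dots,0,1,\dots,1)$ in (a), and in (b) it uses a basis of the even-weight subcode of $\CodeCoo$ extended by $e_k+e_{k+1}$. It checks by hand that these two codes are neighbors in the required orbits. It then transports the pair by $M\in\Ort_n(\F_2)$, using that every element of $\Orb_{\Ort_n(\F_2)}(\CodeCoo)$ has the form $\CodeCoo M$ and that $M$ preserves intersection dimensions.

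You instead build the neighbor directly for an arbitrary $C\in\LCDoo{n}{k}$, and you deduce (a) from (b) by duality rather than giving a second explicit example. This has several consequences:
\begin{enumerate}[(i)]
\item Your version uses Theorem~\ref{thm:orbits} only to identify the orbits with the parity classes of Proposition~\ref{prop:ones}.
\item It shows where the parity hypothesis enters: for odd $k$ the hyperplane $U$ is nondegenerate, and the same cofactor computation gives determinant $0$.
\item Your $C'$ is exactly the code $\CodeCe\oplus\Ang{v+d}$ of Lemma~\ref{lem:ooeo-suf}, and your determinant step is the last case of Lemma~\ref{lem:half-neighbors}. Letting $y$ range over the $2^{n-k-1}$ odd-weight vectors of $C^\perp$ therefore already produces that many pairwise distinct even-like LCD neighbors, anticipating Proposition~\ref{prop:ooeo-edges}.
\end{enumerate}

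In fact, the paper's pair in (b) is your construction specialised to $C=\CodeCoo$, $c_0=e_k$, $y=e_{k+1}$. What the orbit-transport buys is brevity once transitivity of $\Ort_n(\F_2)$ on each class is granted. It also gives an explicit neighbor $\CodeCoe M$ (resp.\ $\CodeCeo M$) of $\CodeCoo M$.
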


\begin{proof}
	We denote again by $e_i$ the $i$th unit vector of length $n$. 
	Note that $\MatGoo := (I_k \mid 0_{k \times (n - k)})$ generates a code in $\Orb_{\Ort_n(\F_2)}(\CodeCoo)$.
	\begin{enumerate}[(a)]
		\item If $n - k$ is even, consider the matrix 
			\begin{align*}
				\MatGoe := \begin{bmatrix}
					I_{k-1} &   & 0	  & \\
						  0 & 1 & \cdots & 1
				\end{bmatrix}
				.
			\end{align*}
			The code generated by $\MatGoe$ is on $\Orb_{\Ort_n(\F_2)}(\CodeCoo)$ and any other element on this orbit has a generator matrix of the form $\MatGoe \cdot M$, for some $M\in \Ort_n(\F_2)$. 
			The intersection $\MatGoo \cdot M\cap \MatGoe \cdot M$ is generated by $e_1 M, \ldots, e_{k-1} M$, which implies the statement.
		\item If $k$ is even, the same strategy can be used for $\Orb_{\Ort_n(\F_2)}(\CodeCoo)$ and $\Orb_{\Ort_n(\F_2)}(\CodeCeo)$ as follows: define $\CurB := {\{e_{2i} + e_{2i + 1}\}}_{i = 1, \ldots, \frac{k}{2} - 1} \cup {\{\sum_{j = 1}^{2i} e_i\}}_{i = 1, \ldots, \frac{k}{2}}$, then $\CurB \cup \{e_1\}$ is a basis for the same code as generated by $\MatGoo$ (on the odd-odd orbit), and $\CurB \cup \{e_k + e_{k + 1}\}$ is a basis of an even-odd code. Analogously to above, we can use the orbit structure to construct a neighbor on $\Orb_{\Ort_n(\F_2)}(\CodeCoo)$ for any element on $\Orb_{\Ort_n(\F_2)}(\CodeCeo)$.
			\qedhere{}
	\end{enumerate}
\end{proof}

To show that there are never neighbor relationships between the other two orbits, we will need the following lemma:

\begin{lemma}\label{lem:even-subcode}
	Let $C$ be an odd-like linear ${[n, k]}_2$ code.
	Then $C$ has a unique even-like ${[n, k - 1]}_2$ subcode.
\end{lemma}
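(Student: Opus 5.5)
The idea is to realize the even-like vectors of $C$ as the kernel of a linear functional. As in the proof of Proposition~\ref{prop:ones}, let $w = (1, \ldots, 1) \in \F_2^n$ and consider the weight-parity map
\begin{align*}
	\varphi \colon C \to \F_2, \qquad c \mapsto \Ang{c, w} = \sum_{i = 1}^n c_i \pmod 2 .
\end{align*}
This map is $\F_2$-linear. A vector $c$ has even weight exactly when $\varphi(c) = 0$, so a subcode of $C$ is even-like if and only if it is contained in $\ker \varphi$.

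For existence, use that $C$ is odd-like. Then some $c \in C$ has $\varphi(c) = 1$, so $\varphi$ is nonzero and hence surjective onto $\F_2$. By rank–nullity, $E := \ker \varphi$ is a subspace of $C$ of dimension $k - 1$. It is even-like by construction, so it is an even-like ${[n, k - 1]}_2$ subcode of $C$.

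For uniqueness, let $D \subseteq C$ be any even-like subcode of dimension $k - 1$. Every codeword of $D$ has even weight, so $D \subseteq \ker \varphi = E$. Since $\dim D = \dim E = k - 1$, we get $D = E$.

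There is no real obstacle here. The only point needing care is the degenerate case $k = 1$. Then $E = \{0\}$, the zero code of dimension $0$, and it is trivially even-like and unique. So the statement holds uniformly for all $k \geq 1$.
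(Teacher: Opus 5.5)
Your proof is correct and is essentially the paper's argument: the paper's subcode $\CodeCe = {(C^\perp \oplus \Ang{w})}^\perp$ is exactly $C \cap w^\perp = \ker\varphi$. Its uniqueness step (every vector of $C \setminus \CodeCe$ has odd weight, so any even-like subcode lies in $\CodeCe$) is your containment-plus-dimension argument. Your version is slightly cleaner because it avoids citing Proposition~\ref{prop:ones}, which is stated only for LCD codes, although the fact it supplies holds for any code.
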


\begin{proof}
	Let $w = (1, \ldots, 1) \in \F_2^n$.
	By Proposition~\ref{prop:ones}, $C$ is odd-like, so $w \notin C^{\perp}$.
	Then the code $\CodeCe := {(C^{\perp} \oplus \Ang{w})}^\perp$ is an even-like subcode of $C$ with dimension $k - 1$.
	Furthermore, the $2^{k - 1}$ vectors in $C \setminus \CodeCe$ are all of odd weight.
	Thus, $\CodeCe$ must be unique.
\end{proof}

\begin{proposition}
	Given any $n$ and $k$, no code $C \in \LCDoe{n}{k}$ is neighbors of any $C' \in \LCDeo{n}{k}$.
\end{proposition}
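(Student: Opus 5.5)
We argue by contradiction. Suppose $C \in \LCDoe{n}{k}$ and $C' \in \LCDeo{n}{k}$ are neighbors, and set $U := C \cap C'$, of dimension $k-1$. Write $w = (1,\ldots,1)$.

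By Proposition~\ref{prop:ones}, $w \in C$ and $w \in C'^{\perp}$. The second condition says $C'$ is even-like, so $U \subseteq C'$ is even-like and $w \perp U$.

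Now look at $C$. It is odd-like, since $w \notin C^\perp$. By Lemma~\ref{lem:even-subcode}, $C$ has a unique even-like $(k-1)$-dimensional subcode, namely $\CodeCe = {(C^\perp \oplus \Ang{w})}^\perp = C \cap w^\perp$. Since $U \subseteq C$ is even-like of dimension $k-1$, uniqueness forces $U = C \cap w^\perp$.

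We then use $w \in C$ together with $\Ang{w,w} = n \bmod 2$, and split on the parity of $n$.
\begin{itemize}
\item If $n$ is even, then $w \in C \cap w^\perp = U \subseteq C'$. Combined with $w \in C'^\perp$, this gives $w \in C' \cap C'^\perp$, contradicting that $C'$ is LCD.
\item If $n$ is odd, then $w \notin w^\perp$, so $w \notin U$ and $C = U \oplus \Ang{w}$. Since $w \perp U$ and $U \subseteq C'$, we have $U \subseteq C \cap C'$. We need a contradiction from $C'$ sharing $U$ with $C$ while $w \in C'^\perp$. The key fact is that $U = C \cap w^\perp$ lies inside $C'$.
\end{itemize}

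The main work is in the odd $n$ case. I would pass to duals using Proposition~\ref{prop:dual-neighbors}. The codes $C^\perp \in \LCDeo{n}{n-k}$ and $C'^\perp \in \LCDoe{n}{n-k}$ are neighbors, so the roles swap. Applying the same argument to $C'^\perp$, which is odd-like and contains $w$, gives $C^\perp \cap C'^\perp = C'^\perp \cap w^\perp$.

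Both decompositions can then be compared. From $C = U \oplus \Ang{w}$ with $w \perp U$, we obtain $C^\perp = U^\perp \cap w^\perp$. Since $U \subseteq C'$, we have $C'^\perp \subseteq U^\perp$, hence $C'^\perp \cap w^\perp \subseteq C^\perp$. This is consistent with the relation from the dual argument, so I would next use the dimension count. We have $C + C' = U + \Ang{w} + C'$, which has dimension $k+1$. Also $w \in C'^\perp$ while $w \in C + C'$.

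The key step is to show that $\Ang{w}$, $C'$, and $C'^\perp$ force a nonzero vector into $C' \cap C'^\perp$. Concretely, I would project $w$ onto $C'$ along $C'^\perp$. Since $w \in C'^\perp$, this projection is $0$. The plan is then to derive a contradiction from the fact that $C$ and $C'$ induce equal forms on $U$ and differ only by $w$, which is isotropic-free. I expect this parity bookkeeping, possibly via counting odd-weight vectors in $C \setminus U$ versus $C' \setminus U$, to be the main obstacle.
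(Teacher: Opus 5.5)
Your even-$n$ case is correct. It is in fact tighter than the paper's argument. After reducing to $n,k$ both even, the paper asserts $w=u+e$ with $u\in C\setminus\CodeCe$. That does not follow from $w\in C$: for even $n$, $w$ has even weight and so lies in $\CodeCe$. The contradiction is then exactly yours, namely $w\in\CodeCe\subseteq C'$ together with $w\in C'^{\perp}$.

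\textbf{The gap is the odd-$n$ case, which you never close.} The relation you get by dualizing is, as you note yourself, merely consistent. Projecting $w$ onto $C'$ along $C'^{\perp}$ gives $0$ trivially. The announced ``parity bookkeeping'' is never carried out.

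The missing idea is a dimension-parity obstruction. Let $D$ be an even-like binary code. Then $\Ang{x,x}=\mathrm{wt}(x) \bmod 2=0$ for every $x\in D$, so any Gram matrix $G_DG_D^\top$ is symmetric with zero diagonal, i.e.\ alternating. An alternating matrix of odd size is singular. Hence an even-like binary LCD code must have even dimension.

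Apply this twice. For $C'$ it gives $k$ even. For the even-like LCD code $C^\perp$, of dimension $n-k$, it gives $n-k$ even. So $n$ is even, and your odd case is vacuous.

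This is precisely how the paper handles every parameter choice except $n,k$ both even. By Theorem~\ref{thm:orbits} (equivalently Theorem~\ref{thm:CMTQ-count}), one of $\LCDoe{n}{k}$ and $\LCDeo{n}{k}$ is empty unless $n$ and $k$ are both even.

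If you prefer to stay inside your own setup, the odd case also closes directly. For odd $n$ you have $C=U\oplus\Ang{w}$ with $w\perp U$ and $\Ang{w,w}=1$. So the Gram matrix of $C$ is block-diagonal, and $U$ is LCD because $C$ is. Then $U$ (dimension $k-1$) and $C'$ (dimension $k$) are both even-like binary LCD codes. Their dimensions would both have to be even, which is impossible.
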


\begin{proof}
	If $n$ and $k$ are not both even, either $\LCDoe{n}{k}$ or $\LCDeo{n}{k}$ is empty and the statement is trivial.
	So assume $n$ and $k$ are both even and assume there exist some $C \in \LCDoe{n}{k}$ and $C' \in \LCDeo{n}{k}$ such that they are neighbors.
	Then their intersection must be even-like and therefore must be the unique even-like $(k - 1)$-dimensional subcode $\CodeCe \subset C$ defined in Lemma~\ref{lem:even-subcode}.
	Let $u \in \F_2^n$ be some (necessarily odd-weight) vector such that $C = \CodeCe \oplus \Ang{u}$ and let $w = (1, \ldots, 1) \in \F_2^n$.
	Then by Proposition~\ref{prop:ones}, $w \in C$ and $w \notin C^{\perp}$.
	It follows that $w = u + e$ for some even-weight vector $e \in \CodeCe$.
	But then $w$ has odd weight which is a contradiction since $n$ is even.
\end{proof}

We need the following theorems and lemmata to explicitly compute the biregularity degrees between the orbits. First we recall one of the main results of~\cite{CMTQ18}.

\begin{theorem}[\cite{CMTQ18}, Theorem 4.6]\label{thm:CMTQ-count}
	\begin{align*}
		\Abs{\LCDoo{n}{k}}
		&= \begin{cases}
			2^{\frac{n k - k^2 + n - 1}{2}}
			\Qbinom{\tfrac{n}{2} - 1}{\tfrac{k - 1}{2}}{4}
			&\text{if $k$ odd, $n$ even}, \\
			2^{\frac{(n - k)(k - 1)}{2}}
			(2^{n - k} - 1)
			\Qbinom{\tfrac{n - 1}{2}}{\tfrac{k - 1}{2}}{4}
			&\text{if $k$ odd, $n$ odd}, \\
			2^{\frac{k(n - k - 1)}{2}}
			(2^k - 1)
			\Qbinom{\tfrac{n - 1}{2}}{\tfrac{k}{2}}{4}
			&\text{if $k$ even, $n$ odd}, \\
			2^{\frac{k(n - k)}{2}}
			(2^k - 1)
			\Qbinom{\tfrac{n}{2} - 1}{\tfrac{k}{2}}{4}
			&\text{if $k$ even, $n$ even}.
		\end{cases} \\
		\Abs{\LCDoe{n}{k}}
		&= \begin{cases}
			2^{\frac{(n - k)(k - 1)}{2}}
			\Qbinom{\tfrac{n - 1}{2}}{\tfrac{k - 1}{2}}{4}
			&\text{if $k$ odd, $n$ odd}, \\
			2^{\frac{k(n - k)}{2}}
			\Qbinom{\tfrac{n}{2} - 1}{\tfrac{k}{2} - 1}{4}
			&\text{if $k$ even, $n$ even}, \\
			0
			&\text{otherwise}.
		\end{cases} \end{align*}
		\begin{align*}
		\Abs{\LCDeo{n}{k}}
		&= \begin{cases}
			2^{\frac{k(n - k - 1)}{2}}
			\Qbinom{\tfrac{n - 1}{2}}{\tfrac{k}{2}}{4}
			&\text{if $k$ even, $n$ odd}, \\
			2^{\frac{k(n - k)}{2}}
			\Qbinom{\tfrac{n}{2} - 1}{\tfrac{k}{2}}{4}
			&\text{if $k$ even, $n$ even}, \\
			0
			&\text{otherwise}.
		\end{cases}
	\end{align*}
\end{theorem}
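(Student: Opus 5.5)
We would not reprove the total count $\mu$. Instead we would count the two "exceptional" classes directly and obtain $\Abs{\LCDoo{n}{k}}$ by subtraction. Throughout we use that a code $C$ is LCD if and only if the restriction of $\Ang{\cdot,\cdot}$ to $C$ is non-degenerate. By Proposition~\ref{prop:ones}, taking duals gives a bijection $\LCDoe{n}{k} \leftrightarrow \LCDeo{n}{n-k}$. Hence it suffices to compute $\Abs{\LCDeo{n}{k}}$ for all $n,k$ and then substitute $k \mapsto n-k$ for the $\LCDoe{n}{k}$ formula.

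\textbf{Counting $\LCDeo{n}{k}$.} By Proposition~\ref{prop:ones}, these are exactly the LCD codes $C \subseteq w^\perp$ with $w \notin C$, where $w=(1,\ldots,1)$. Every vector of $w^\perp$ has even weight, so $\Ang{x,x}=0$ on $w^\perp$. The form restricted to $w^\perp$ is therefore alternating, and $k$ must be even. We then split by the parity of $n$.
\begin{itemize}
\item If $n$ is odd, then $\Ang{w,w}=1$, so $\F_2^n = \Ang{w} \oplus w^\perp$ is an orthogonal decomposition. It follows that $w^\perp$ is a non-degenerate symplectic space of dimension $2m = n-1$, and $w \notin C$ holds automatically. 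We need the number of non-degenerate $2j$-dimensional subspaces of a $2m$-dimensional symplectic space over $\F_q$, namely
\begin{align*}
q^{2j(m-j)} \Qbinom{m}{j}{q^2}.
\end{align*}
With $q=2$, $m=\frac{n-1}{2}$ and $j=\frac{k}{2}$ this gives $2^{\frac{k(n-k-1)}{2}}\Qbinom{\frac{n-1}{2}}{\frac{k}{2}}{4}$.
\item If $n$ is even, then $w \in w^\perp$ and the radical of the form on $w^\perp$ is exactly $\Ang{w}$. Since $C \cap \Ang{w} = 0$, $C$ maps isomorphically onto a $k$-dimensional subspace $\bar C$ of the non-degenerate symplectic space $w^\perp/\Ang{w}$, which has dimension $n-2$. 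The form on $C$ is non-degenerate if and only if it is non-degenerate on $\bar C$. Conversely, each such $\bar C$ has exactly $2^k$ complements of $\Ang{w}$ in its preimage. The count is therefore
\begin{align*}
2^k \cdot 2^{\frac{k(n-2-k)}{2}} \Qbinom{\tfrac{n}{2}-1}{\tfrac{k}{2}}{4} = 2^{\frac{k(n-k)}{2}} \Qbinom{\tfrac{n}{2}-1}{\tfrac{k}{2}}{4}.
\end{align*}
\end{itemize}
The $\LCDoe{n}{k}$ formulas then follow by replacing $k$ with $n-k$ and applying $\Qbinom{a}{b}{4}=\Qbinom{a}{a-b}{4}$. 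For example, when $n$ and $k$ are both even, $\Qbinom{\frac n2-1}{\frac{n-k}{2}}{4}=\Qbinom{\frac n2-1}{\frac k2-1}{4}$. The vanishing cases are exactly those where the parity of $k$ (respectively $n-k$) forbids an alternating non-degenerate form.

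\textbf{The main obstacle} is the symplectic subspace count. We would prove it by orbit counting. The symplectic group $\mathrm{Sp}_{2m}(q)$ acts transitively on non-degenerate $2j$-subspaces by Witt's theorem. The stabilizer of such a subspace $W$ is $\mathrm{Sp}(W)\times\mathrm{Sp}(W^\perp)$. The number of subspaces is therefore
\begin{align*}
\frac{\Abs{\mathrm{Sp}_{2m}(q)}}{\Abs{\mathrm{Sp}_{2j}(q)}\,\Abs{\mathrm{Sp}_{2m-2j}(q)}}, \qquad \Abs{\mathrm{Sp}_{2m}(q)} = q^{m^2}\prod_{i=1}^m (q^{2i}-1),
\end{align*}
and this quotient simplifies to $q^{2j(m-j)}\Qbinom{m}{j}{q^2}$. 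Alternatively, one can count ordered hyperbolic bases of $W$ and divide by the number of hyperbolic bases of a fixed $W$.

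Finally, we set $\Abs{\LCDoo{n}{k}} = \mu - \Abs{\LCDoe{n}{k}} - \Abs{\LCDeo{n}{k}}$ in each of the four parity cases. In the case $n,k$ both even this requires the $q$-Pascal identity $\Qbinom{a}{b}{4} = \Qbinom{a-1}{b-1}{4} + 4^{b}\Qbinom{a-1}{b}{4}$, or the variant with the powers swapped, applied to $\Qbinom{\frac{n-1}{2}}{\cdot}{4}$ and $\Qbinom{\frac n2-1}{\cdot}{4}$, to simplify to the stated closed forms. This last step is routine but bookkeeping-heavy.
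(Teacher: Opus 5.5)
The paper gives no proof of this statement. It is quoted from \cite{CMTQ18} and used as a black box, e.g.\ in Proposition~\ref{prop:oeoo-edges}. Your argument is therefore an independent derivation, and it is correct.

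\textbf{The even-like counts.} You identify $\LCDeo{n}{k}$ with the LCD subspaces of $w^\perp$, where $\Ang{x,x}=0$, so the form is alternating and $k$ must be even. You then reduce to non-degenerate $k$-subspaces of a symplectic space: $w^\perp$ itself for $n$ odd, and $w^\perp/\Ang{w}$ for $n$ even, with the factor $2^k$ counting complements of $\Ang{w}$ in the preimage. Counting these as
\[
|\mathrm{Sp}_{2m}(q)|/(|\mathrm{Sp}_{2j}(q)|\,|\mathrm{Sp}_{2m-2j}(q)|)=q^{2j(m-j)}\Qbinom{m}{j}{q^2}
\]
reproduces the stated $\LCDeo{n}{k}$ formulas exactly. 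Duality $\LCDoe{n}{k}\leftrightarrow\LCDeo{n}{n-k}$ together with $\Qbinom{a}{b}{4}=\Qbinom{a}{a-b}{4}$ then gives the $\LCDoe{n}{k}$ formulas. What this route buys is a conceptual explanation of the two exceptional classes and their parity constraints: they are non-degenerate subspaces of a symplectic space. It does this with almost no computation.

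\textbf{Caveat on $\LCDoo{n}{k}$.} You never count $\LCDoo{n}{k}$ directly; you obtain it by subtracting from $\mu$, which is itself cited from \cite{CMTQ18}. Within this paper that is legitimate, since $\mu$ is stated as a separate known result. But your argument then only shows the odd--odd formula is consistent with the total, rather than proving it independently. A self-contained proof of the cited theorem would need an independent count of $\mu$ or of the odd--odd class.

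\textbf{The final subtraction.} This step is lighter than you suggest. In three parity cases it is immediate. For $n,k$ both even you get $2^{k(n-k)/2}(2^{n-k}-1)\Qbinom{n/2-1}{k/2-1}{4}$. This equals the stated form by the single ratio identity
\[
(2^{k}-1)\Qbinom{n/2-1}{k/2}{4}=(2^{n-k}-1)\Qbinom{n/2-1}{k/2-1}{4},
\]
the same one used in the proof of Proposition~\ref{prop:oeoo-edges}. No $q$-binomial with top entry $\frac{n-1}{2}$ appears in that case, since $n$ is even.
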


\begin{lemma}\label{lem:ooeo-nec}
	If $C \in \LCDoo{n}{k}$ has a neighbor $C' \in \LCDeo{n}{k}$, then $C' = \CodeCe \oplus \Ang{u}$ where $\CodeCe$ is the unique $(k - 1)$-dimensional even-like subcode of $C$ and $u = v + d$ for some odd-weight $d \in C^{\perp}$ and $v \in C$ such that $C = \CodeCe \oplus \Ang{v}$.
\end{lemma}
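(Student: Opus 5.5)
The plan is to pin down the intersection $C \cap C'$ using the uniqueness in Lemma~\ref{lem:even-subcode}, and then decompose a complementary generator of $C'$ along $C \oplus C^\perp = \F_2^n$. Let $w = (1, \ldots, 1)$. By Proposition~\ref{prop:ones}, $C \in \LCDoo{n}{k}$ means $w \notin C$ and $w \notin C^\perp$; in particular $C$ is odd-like. Likewise $C' \in \LCDeo{n}{k}$ means $C'$ is even-like.

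\emph{Step 1 (the intersection).} Since $C$ and $C'$ are neighbors, $C \cap C'$ is a $(k-1)$-dimensional subcode of $C$. It is even-like because it lies inside the even-like code $C'$. By the uniqueness in Lemma~\ref{lem:even-subcode}, $C \cap C' = \CodeCe$. Recall from that proof that $\CodeCe^\perp = C^\perp \oplus \Ang{w}$. Hence $C' = \CodeCe \oplus \Ang{u}$ for some $u \notin C$. Since $C$ is LCD, we can write $u = v + d$ with $v \in C$ and $d \in C^\perp$. Note that $d \neq 0$ because $u \notin C$.

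\emph{Step 2 ($v \notin \CodeCe$; this is the main point).} Suppose $v \in \CodeCe$. Then $d = u - v \in C'$, so $C' = \CodeCe \oplus \Ang{d}$. Since $C'$ is even-like, $\Ang{d, d} = \Wt(d) \equiv 0 \pmod 2$. Also $d \in C^\perp \subseteq \CodeCe^\perp$. So $d$ is orthogonal to all of $C'$, i.e.\ $0 \neq d \in C' \cap C'^\perp$. This contradicts $C'$ being LCD. Therefore $v \in C \setminus \CodeCe$, and so $C = \CodeCe \oplus \Ang{v}$.

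\emph{Step 3 (parity of $d$).} As noted in the proof of Lemma~\ref{lem:even-subcode}, every vector of $C \setminus \CodeCe$ has odd weight, so $\Wt(v)$ is odd. On the other hand, $u \in C'$ has even weight. Working modulo $2$, $\Wt(u) = \Ang{u, w} = \Ang{v, w} + \Ang{d, w} = \Wt(v) + \Wt(d)$. Hence $\Wt(d)$ is odd. This gives exactly the description $C' = \CodeCe \oplus \Ang{u}$ with $u = v + d$, where $d \in C^\perp$ has odd weight and $C = \CodeCe \oplus \Ang{v}$.

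The only nontrivial step is Step~2, where the LCD property of $C'$ is used to rule out $u$ projecting into $\CodeCe$; the other steps are bookkeeping based on Lemma~\ref{lem:even-subcode} and Proposition~\ref{prop:ones}.
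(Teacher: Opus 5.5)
Your proof is correct and follows essentially the same route as the paper: identify $C \cap C'$ with $C_{\mathrm{e}}$ via the uniqueness in Lemma~\ref{lem:even-subcode}, and decompose the extra generator along $C \oplus C^\perp$. You then rule out a generator lying in $C^\perp$ by exhibiting $0 \neq d \in C' \cap C'^\perp$, and conclude by parity that $d$ has odd weight. The only difference is presentational: you take $v$ to be the $C$-component of $u$ and show directly that $v \notin C_{\mathrm{e}}$, whereas the paper fixes $v$ first and sorts the cosets of $C_{\mathrm{e}}$ into those coming from $C^\perp$ and those coming from $v + C^\perp$.
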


\begin{proof}
	By the definition of a neighbor, the intersection $C \cap C'$ must have dimension $k - 1$ and since it is contained in an even-like code, it will also be even-like.
	It therefore must be $\CodeCe$ and $C' = \CodeCe \oplus \Ang{u}$ for some even weight vector $u \in \F_2^n$.
	In other words, $C' = \CodeCe \cup (u + \CodeCe)$ for some coset $u + \CodeCe$ not contained in $C$.
	Because $C$ is LCD, for each $u \in \F_2^n$, we can write $u = c + d$ for $c \in C$ and $d \in C^{\perp}$.
	Furthermore, for some fixed $v \in C \setminus \CodeCe$, we can write $c \in C$ as $c = e + v$ for $e \in \CodeCe$.
	So the cosets of $\CodeCe$ correspond to the vectors in $C^{\perp} \cup (v + C^{\perp})$.

	We now show that only cosets corresponding to $v + C^{\perp}$ give us LCD codes.
	Assume that $C' = \CodeCe \oplus \Ang{d}$ for some nonzero (necessarily even-weight) $d \in C^{\perp}$.
	Then for all $c' \in \CodeCe$, $\Ang{c', d} = 0$.
	Furthermore, because it is of even weight, $\Ang{d, d} = 0$.
	So $d \in C' \cap C'^{\perp}$ and therefore $C'$ cannot be an LCD code.

	Finally, $v$ must be of odd weight in order for $C$ to be odd-like, $d$ must also be of odd weight in order for $u$ to be of even weight and $C'$ to be even-like.
\end{proof}

\begin{lemma}\label{lem:ooeo-suf}
	Let $k$ be even, $C \in \LCDoo{n}{k}$, and let $\CodeCe$ be its unique $(k - 1)$-dimensional even-like subcode. Furthermore, let $v \in C \setminus \CodeCe$ be a fixed vector such that $C = \CodeCe \oplus \Ang{v}$.
	Then for each odd $d \in C^{\perp}$, $C' = \CodeCe \oplus \Ang{v + d}$ is a unique even-like LCD code.
\end{lemma}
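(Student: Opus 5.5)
We need to show three things about $C' = \CodeCe \oplus \Ang{v + d}$, where $d \in C^{\perp}$ has odd weight: $C'$ is a $k$-dimensional neighbor of $C$, $C'$ is even-like, and $C'$ is LCD. We also need that distinct odd $d$ give distinct codes $C'$, which is how we read ``unique''.

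\textbf{Dimension, neighbor property, distinctness.} Since $d \in C^{\perp}$ is nonzero and $C \cap C^{\perp} = \{0\}$, the vector $v + d$ does not lie in $C$. Hence $C'$ has dimension $k$ and $C \cap C' = \CodeCe$, so $C'$ is a neighbor of $C$.

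For distinctness, suppose $\CodeCe \oplus \Ang{v+d} = \CodeCe \oplus \Ang{v+d'}$. Then $v + d' \in (v + d) + \CodeCe$, since the only other coset available is $\CodeCe$ itself, and $v+d' \notin C$. This gives $d' - d \in C \cap C^{\perp} = \{0\}$, so $d = d'$.

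\textbf{Even-likeness.} Both $v$ (as an element of $C \setminus \CodeCe$, by the proof of Lemma~\ref{lem:even-subcode}) and $d$ have odd weight. The weight of $v + d$ is congruent to $\Ang{v+d, w} = \Ang{v,w} + \Ang{d,w} \equiv 1 + 1 = 0 \pmod 2$. Every element of $\CodeCe$ is even, so $C'$ is even-like.

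\textbf{LCD property.} This is the main step. Take $x = e + \lambda(v+d) \in C' \cap C'^{\perp}$ with $e \in \CodeCe$ and $\lambda \in \F_2$.
\begin{enumerate}[(i)]
\item For every $e' \in \CodeCe$ we have $\Ang{d, e'} = 0$, because $d \in C^{\perp}$. So $0 = \Ang{x, e'} = \Ang{e + \lambda v, e'}$, where $e + \lambda v \in C$.
\item Also $\Ang{v+d, v+d} = \Ang{v,v} + \Ang{d,d}$, because the cross term vanishes. Since $\Ang{y,y}$ is the weight of $y$ mod 2 over $\F_2$, this equals $1 + 1 = 0$. Likewise $\Ang{e, v+d} = \Ang{e, v}$. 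Hence $0 = \Ang{x, v+d} = \Ang{e, v}$.
\end{enumerate}

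Now split on $\lambda$.
\begin{enumerate}[(i)]
\item If $\lambda = 0$: then $e \in C$ is orthogonal to $\CodeCe$ and to $v$, hence to all of $C$. Since $C$ is LCD, $e = 0$.
\item If $\lambda = 1$: set $c = e + v \in C$. Then $c \perp \CodeCe$, and $\Ang{c, v} = \Ang{e,v} + \Ang{v,v} = 0 + 1 = 1$. So $c$ spans the orthogonal complement of $\CodeCe$ inside $C$; this complement is one-dimensional because the form is nondegenerate on $C$. We must derive a contradiction.
\end{enumerate}

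\textbf{The obstacle: using that $k$ is even.} The case $\lambda = 1$ is where the hypothesis on $k$ enters, and I expect it to be the key point. From $\Ang{c,v} = 1$ and $c \in v + \CodeCe$ we get $\Ang{c,c} = 1$, so $c$ has odd weight.

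I would then use the standard fact that for a nondegenerate symmetric bilinear form over $\F_2$, the map $y \mapsto \Ang{y,y}$ is linear. It is therefore represented by a unique vector $c_0 \in C$ with $\Ang{y,y} = \Ang{y, c_0}$ for all $y \in C$; concretely, $c_0$ is the orthogonal projection $w|_C$ of the all-one vector.

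Since $\CodeCe$ is exactly the kernel of this map, $c_0$ spans $\CodeCe^{\perp} \cap C$, so $c = c_0$. Then $\Ang{c_0, c_0}$ equals the weight of $c_0$ mod 2. Writing $w = c_0 + d_0$ with $d_0 \in C^{\perp}$ gives $\Ang{c_0,c_0} = \Ang{c_0, w}$.

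The final claim, to be checked carefully, is the classical fact that $\Ang{c_0,c_0} \equiv \dim C \pmod 2$. This holds because the trace of the Gram matrix equals $\sum_i \Ang{b_i,b_i}$ for a dual-basis computation. For even $k$ it gives $\Ang{c,c} = 0$, contradicting $\Ang{c,c} = 1$. Hence $C' \cap C'^{\perp} = \{0\}$ and $C'$ is LCD.
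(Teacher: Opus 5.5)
Your argument is correct, and its skeleton is the paper's. A nonzero vector of $C'\cap C'^\perp$ either lies in $C_{\mathrm{e}}$ (your case $\lambda=0$, the paper's Case~1, handled identically) or in $v+d+C_{\mathrm{e}}$. In the latter case both proofs first extract that $e+v\in C_{\mathrm{e}}^\perp$. The routes differ only in how the contradiction is reached from there:
\begin{itemize}
\item The paper notes that $C_{\mathrm{e}}$ is even-like of odd dimension $k-1$, hence not LCD. It takes a nonzero $w'\in C_{\mathrm{e}}\cap C_{\mathrm{e}}^\perp$ and gets $\langle w',v\rangle=\langle w',e+v\rangle+\langle w',e\rangle=0$, so $w'\in C\cap C^\perp$.
\item You instead identify $e+v$ with the characteristic vector $c_0=w|_C$ and appeal to $\langle c_0,c_0\rangle\equiv\dim C\pmod 2$.
\end{itemize}

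That identity is true, but the reason you give does not prove it. Saying that the trace of the Gram matrix is $\sum_i\langle b_i,b_i\rangle$ is just the definition of the trace. Since this is the only place where the parity of $k$ enters, it needs an actual argument. There are two quick options.
\begin{itemize}
\item \emph{Compute.} Let $A$ be the Gram matrix of a basis of $C$ and $a$ its diagonal. Then $c_0$ has coordinate vector $A^{-1}a$, so $\langle c_0,c_0\rangle=a^\top A^{-1}a=\sum_i A_{ii}(A^{-1})_{ii}=\operatorname{tr}(AA^{-1})=k$ in $\F_2$. Here symmetry of $A$ and $A^{-1}$ makes all off-diagonal contributions cancel in pairs in characteristic~$2$.
\item \emph{Argue structurally.} If $\langle c_0,c_0\rangle=1$, then $C=C_{\mathrm{e}}\perp\langle c_0\rangle$. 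So $C_{\mathrm{e}}$ carries a nondegenerate alternating form, and $k-1$ is even.
\end{itemize}

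The structural version is exactly the fact the paper uses: an even-like LCD code has even dimension. Filled in that way, your proof is a repackaging of the paper's. What your formulation adds is the explicit identification $C\cap C_{\mathrm{e}}^\perp=\langle w|_C\rangle$. This gives the clean criterion that $C_{\mathrm{e}}$ is LCD iff $k$ is odd. Your separate checks of dimension, distinctness, and even-likeness are fine. They are more explicit than the paper, which leaves even-likeness to the preceding lemma.
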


\begin{proof}
	Given some odd-weight $d \in C^{\perp}$, assume that $C' = \CodeCe \oplus \Ang{v + d}$ is not an LCD code.
	Then there exists some nonzero $c \in (C' \cap C'^{\perp})$.
	We consider two cases:
	\begin{itemize}
		\item Case 1: Assume $c \in \CodeCe$.
			Then
			\begin{align*}
				0
				= \Ang{c, v + d}
				= \Ang{c, v} + \Ang{c, d}
				= \Ang{c, v}.
			\end{align*}
			Since $c \in C'^{\perp} \subset \CodeCe^{\perp}$, we also have that $\Ang{c, c'} = 0$ for all $c' \in \CodeCe$.
			Therefore $c \in (C \cap C^{\perp})$ which is a contradiction since $C$ is an LCD code.
		\item Case 2: Assume $c = e + v + d$ for $e \in \CodeCe$.
			Again, since $c \in C'^{\perp} \subset \CodeCe^{\perp}$, we have that for all $c' \in \CodeCe$
			\begin{align*}
				0
				= \Ang{c, c'}
				= \Ang{e + v + d, c'}
				= \Ang{e + v, c'} + \Ang{d, c'}
				= \Ang{e + v, c'},
			\end{align*}
			i.e., $e + v \in \CodeCe^{\perp}$.

			The code $\CodeCe$ is even-like and has dimension $k - 1$ so it cannot be an LCD code.
			Therefore there exists some nonzero $w \in (\CodeCe \cap \CodeCe^{\perp})$.
			Then
			\begin{align*}
				0
				= \Ang{w, e + v}
				= \Ang{w, e} + \Ang{w, v}
				= \Ang{w, v}.
			\end{align*}
			and $\Ang{w, c'} = 0$ for all $c' \in \CodeCe$.
			So $w \in (C \cap C^{\perp})$ which is again a contradiction.
	\end{itemize}

	The uniqueness of the code $C'$ follows from the fact that the cosets of $\CodeCe$ are disjoint.
\end{proof}

\begin{proposition}\label{prop:ooeo-edges}
	Let $C \in \LCDoo{n}{k}$.
	Then $C$ has $2^{n - k - 1}$ neighbors in $\LCDeo{n}{k}$ if $k$ is even and none if $k$ is odd.
\end{proposition}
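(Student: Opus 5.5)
The plan is to combine Lemmas~\ref{lem:ooeo-nec} and~\ref{lem:ooeo-suf}, which together say that the neighbors of $C \in \LCDoo{n}{k}$ lying in $\LCDeo{n}{k}$ are exactly the codes $\CodeCe \oplus \Ang{v + d}$ with $d$ an odd-weight vector of $C^{\perp}$. The count then reduces to counting odd-weight vectors in $C^\perp$ and checking that distinct $d$ give distinct codes.

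First, the case $k$ odd. Since $\LCDeo{n}{k}$ consists of even-like LCD codes, and by the characterization of \cite{CMTQ18} (part (b) of the theorem on bases) every even-like binary LCD code has even dimension, $\LCDeo{n}{k} = \emptyset$ for odd $k$. This agrees with Theorem~\ref{thm:CMTQ-count}, so there are no such neighbors.

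Now let $k$ be even. By Lemma~\ref{lem:ooeo-nec}, every neighbor $C' \in \LCDeo{n}{k}$ of $C$ has the form $\CodeCe \oplus \Ang{v+d}$ with $d \in C^\perp$ of odd weight, where $v$ is a fixed element of $C \setminus \CodeCe$. Replacing $v$ by $v + e$ with $e \in \CodeCe$ does not change the code, so fixing $v$ loses nothing. Conversely, by Lemma~\ref{lem:ooeo-suf}, each such $d$ yields an even-like LCD code $C'$. This $C'$ is a neighbor of $C$: it contains $\CodeCe$, and $v + d \notin C$ since $d \ne 0$ and $C \cap C^\perp = \{0\}$, so $C \cap C' = \CodeCe$ has dimension $k-1$.

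It remains to check that $C'$ is odd-like dual, i.e.\ lies in $\LCDeo{n}{k}$ rather than $\LCDee{n}{k}$; this holds because $\LCDee{n}{k}$ is empty by Proposition~\ref{prop:ones}. We also need injectivity: if $\CodeCe \oplus \Ang{v+d} = \CodeCe \oplus \Ang{v+d'}$, then $d - d' \in \CodeCe + \Ang{\ldots} \subseteq C'$. Concretely, $v+d + v+d' = d+d' \in C' \cap C^\perp$. Since $d + d'$ has even weight, it would have to lie in $\CodeCe \subseteq C$, and $C \cap C^\perp = \{0\}$ forces $d = d'$.

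So the count equals the number of odd-weight vectors in $C^\perp$. Since $C$ is odd-like dual, $C^\perp$ is odd-like, i.e.\ $w \notin C^{\perp\perp} = C$ by Proposition~\ref{prop:ones}. The weight-parity map $d \mapsto \Ang{d,w}$ is therefore a nonzero linear functional on $C^\perp$, of dimension $n-k$. Its nonzero fiber has $2^{n-k-1}$ elements, which gives exactly $2^{n-k-1}$ neighbors.

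The main delicacy is the injectivity step and ensuring no double counting through the choice of $v$; both are handled by the LCD property of $C$ as above.
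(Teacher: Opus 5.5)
Your proposal follows the paper's proof. You combine Lemmas~\ref{lem:ooeo-nec} and~\ref{lem:ooeo-suf}, count the $2^{n-k-1}$ odd-weight vectors of the odd-like code $C^{\perp}$, and dismiss odd $k$ because even-like binary LCD codes have even dimension. Your extra checks are correct and fill in what the paper leaves implicit: that $C \cap C' = \CodeCe$, that $C'$ has odd-like dual, and that the choice of $v$ does not matter.

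There is one slip, in the injectivity step. Even weight of $d+d'$ does not place it in $\CodeCe$, because every vector of $C'$ has even weight. Argue instead that $v+d' \in C' \setminus \CodeCe = v+d+\CodeCe$, which gives $d+d' \in \CodeCe \subseteq C$. Since also $d+d' \in C^\perp$, the LCD property forces $d=d'$.
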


\begin{proof}
	Let $k$ be even and let $\CodeCe$ be the unique $(k - 1)$-dimensional even-like subcode of $C$ and fix some $v \in C \setminus \CodeCe$.
	By Lemma~\ref{lem:even-subcode}, since $C^{\perp}$ is odd-like, it contains $2^{n - k - 1}$ odd-weight codewords.
	Then by Lemma~\ref{lem:ooeo-nec} and Lemma~\ref{lem:ooeo-suf}, each odd-weight vector $d \in C^{\perp}$ corresponds to a coset $v + d + \CodeCe$ which in turn corresponds to a unique even-like neighboring LCD code.

	If $k$ is odd, there are no even LCD codes and therefore $C$ can have no neighbors which are simultaneously even-like and LCD\@.
\end{proof}

By considering the dual (see Proposition~\ref{prop:dual-neighbors}), the following corollary follows directly from Proposition~\ref{prop:ooeo-edges}.

\begin{corollary}\label{cor:oooe-edges}
	Let $C \in \LCDoo{n}{k}$.
	Then $C$ has $2^{k - 1}$ neighbors in $\LCDoe{n}{k}$ if $n - k$ is even and none if $n - k$ is odd.
\end{corollary}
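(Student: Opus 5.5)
The plan is to obtain Corollary~\ref{cor:oooe-edges} from Proposition~\ref{prop:ooeo-edges} by passing to duals. The map $D \mapsto D^\perp$ sends ${[n,k]}_2$ codes bijectively to ${[n,n-k]}_2$ codes. By Theorem~\ref{thm:LCD-equivalence} it preserves the LCD property. By Proposition~\ref{prop:dual-neighbors} it preserves the neighbor relation, in both directions. So it induces a graph isomorphism $\Gq_2(n,k)\cong \Gq_2(n,n-k)$, and we only need to track how it acts on the three classes.

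\textbf{Step 1 (classes under duality).} From Proposition~\ref{prop:ones}, together with $(D^\perp)^\perp = D$, the class labels simply swap:
\begin{itemize}
\item $D \in \LCDoo{n}{k}$ if and only if $D^\perp \in \LCDoo{n}{n-k}$;
\item $D \in \LCDoe{n}{k}$ if and only if $D^\perp \in \LCDeo{n}{n-k}$.
\end{itemize}
Indeed, the conditions ``$w\in D$, $w\notin D^\perp$'' for $D$ become ``$w\notin (D^\perp)^\perp$ fails / holds'' in the swapped order when read for $D^\perp$.

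\textbf{Step 2 (counting).} Let $C\in\LCDoo{n}{k}$. Its neighbors in $\LCDoe{n}{k}$ correspond bijectively, via $D\mapsto D^\perp$, to the neighbors of $C^\perp\in\LCDoo{n}{n-k}$ lying in $\LCDeo{n}{n-k}$. Apply Proposition~\ref{prop:ooeo-edges} with dimension $k' = n-k$:
\begin{itemize}
\item if $n-k$ is even, the count is $2^{n-k'-1} = 2^{k-1}$;
\item if $n-k$ is odd, the count is $0$.
\end{itemize}

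The only point needing care is Step 1, namely checking that the odd/even labels swap correctly under duality. This is immediate from Proposition~\ref{prop:ones}, so I expect no real obstacle.
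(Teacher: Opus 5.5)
Your proof is correct and is exactly the paper's argument: dualize using Proposition~\ref{prop:dual-neighbors} (with Theorem~\ref{thm:LCD-equivalence} for preservation of LCD), note that $\LCDoo{n}{k}$ and $\LCDoe{n}{k}$ correspond to $\LCDoo{n}{n-k}$ and $\LCDeo{n}{n-k}$, and apply Proposition~\ref{prop:ooeo-edges} with dimension $n-k$. The one-line justification in your Step~1 is garbled, but the claim is immediate from the definitions: the types of $D^\perp$ and $(D^\perp)^\perp = D$ are the types of $D$ and $D^\perp$ read in reverse order.
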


\begin{proposition}\label{prop:oeoo-edges}
	Let $C \in \LCDoe{n}{k}$.
	Then $C$ has $2^{n - 1} - 2^{k - 1}$ neighbors in $\LCDoo{n}{k}$.
\end{proposition}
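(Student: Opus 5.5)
The plan is to double count the edges of the bipartite graph $\Jq_2(n,k)[\LCDoo{n}{k},\LCDoe{n}{k}]$ rather than to count directly. First, $\LCDoe{n}{k}$ is nonempty only when $n$ and $k$ are both odd or both even (Theorem~\ref{thm:CMTQ-count}), so we may assume $n-k$ is even. By Theorem~\ref{thm:orbits}, $\LCDoo{n}{k}$ and $\LCDoe{n}{k}$ are each a single $\Ort_n(\F_2)$-orbit. Theorem~\ref{thm:biregular} then says this bipartite graph is biregular: every $C\in\LCDoe{n}{k}$ has the same number $x$ of neighbors in $\LCDoo{n}{k}$, and every code in $\LCDoo{n}{k}$ has the same number of neighbors in $\LCDoe{n}{k}$.

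By Corollary~\ref{cor:oooe-edges}, since $n-k$ is even, each code in $\LCDoo{n}{k}$ has exactly $2^{k-1}$ neighbors in $\LCDoe{n}{k}$. Counting edges from both sides therefore gives
\begin{align*}
	x\cdot\Abs{\LCDoe{n}{k}} = 2^{k-1}\Abs{\LCDoo{n}{k}},
\end{align*}
so it suffices to show that $\Abs{\LCDoo{n}{k}}/\Abs{\LCDoe{n}{k}} = 2^{n-k}-1$. Then $x = 2^{k-1}(2^{n-k}-1) = 2^{n-1}-2^{k-1}$.

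The ratio is computed from Theorem~\ref{thm:CMTQ-count}, with one case for each parity.
\begin{itemize}
	\item[] \emph{Case $n,k$ odd.} The powers of $2$ and the $4$-binomial coefficients are identical in the two formulas, so the ratio is immediately $2^{n-k}-1$.
	\item[] \emph{Case $n,k$ even.} The powers of $2$ cancel, and the ratio becomes
	\begin{align*}
		(2^k-1)\,\Qbinom{m}{j}{4}\Big/\Qbinom{m}{j-1}{4}, \qquad m=\tfrac n2-1,\ j=\tfrac k2.
	\end{align*}
	From the product formula, $\Qbinom{m}{j}{4}/\Qbinom{m}{j-1}{4} = (4^{m-j+1}-1)/(4^j-1) = (2^{n-k}-1)/(2^k-1)$. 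Multiplying by $2^k-1$ again gives $2^{n-k}-1$.
\end{itemize}

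The only step needing care is this $q$-binomial ratio identity. The rest is bookkeeping: the biregularity from the orbit structure, and the fact from Corollary~\ref{cor:oooe-edges} that each odd-odd code contributes $2^{k-1}$ edges. As a consistency check, the total degree $(2^k-1)(2^{n-k}-1)$ from Corollary~\ref{cor:regular_binary}, minus this count, should leave a nonnegative number of neighbors of $C$ inside $\LCDoe{n}{k}$, and none in $\LCDeo{n}{k}$.
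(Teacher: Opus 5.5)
Your proof is correct, but it gets the key fact that all vertices of $\LCDoe{n}{k}$ have the same degree by a different mechanism than the paper. The paper does not invoke Theorem~\ref{thm:biregular} here. Instead it first proves a local upper bound on each degree:
\begin{itemize}
\item A neighbor of $C$ in $\LCDoo{n}{k}$ cannot contain $w=(1,\ldots,1)$, so it meets $C$ in one of the $2^{k-1}$ hyperplanes $C'\subset C$ avoiding $w$.
\item Writing the extra generator as $c+d$ with $c\in C$ and $d\in C^\perp$ shows the neighbor must be $C'\oplus\Ang{d+w}$ with $0\neq d\in C^\perp$.
\item The alternative $C'\oplus\Ang{d}$ fails: $d$ has even weight and is orthogonal to $C'$, so it lies in the hull of $C'\oplus\Ang{d}$.
\end{itemize}
This gives at most $2^{k-1}(2^{n-k}-1)$ such neighbors. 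The same double count you use, with Corollary~\ref{cor:oooe-edges} and Theorem~\ref{thm:CMTQ-count}, then shows the average degree equals this bound, which forces equality at every vertex.

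Your route replaces the upper bound by the equitable-partition argument. That is shorter and needs no local analysis. However, it relies on the transitivity in Theorem~\ref{thm:orbits}: $\LCDoe{n}{k}$ and $\LCDoo{n}{k}$ are each a single $\Ort_n(\F_2)$-orbit, using that orthogonal maps preserve the parity type of a code and of its dual. The paper's argument needs only the cardinalities. As a by-product it also identifies the neighbors explicitly: each of the $2^{k-1}(2^{n-k}-1)$ codes $C'\oplus\Ang{d+w}$ is a distinct odd-odd LCD neighbor of $C$.
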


\begin{proof}
	Let $C \in \LCDoe{n}{k}$.
	Then, for $\LCDoe{n}{k}$ to be nonempty, $n - k$ must be even and necessarily $w = (1, \ldots, 1) \in C$.
	$C$ has $\Qbinom{k}{k - 1}{2} - \Qbinom{k - 1}{k - 2}{2} = 2^{k - 1}$ $(k - 1)$-dimensional subcodes not containing the codeword $w$.
	Any possible neighbor of $C$ in $\LCDoo{n}{k}$ can then be written as the union of such a subcode $C'$ with one of its cosets $v + C'$ for some $v \in \F_2^n$.
	Since $C$ is LCD, either $v$ is of the form $c + d$ or $c + d + w$ for some $c \in C'$ and some $d \in C^\perp$.
	Because $C^\perp \in \LCDeo{n}{k}$, $\Ang{d, d} = 0$, so $C' \oplus \Ang{d}$ would not be an LCD code.
	Furthermore, the code $C' \oplus \Ang{w}$ would have an even dual so would not be contained in $\LCDoo{n}{k}$.
	Therefore any neighboring LCD codes in $\LCDoo{n}{k}$ must be of the form $C' \oplus \Ang{d + w}$ and $C$ has at most $2^{k - 1} (2^{n - k} - 1)$ of them.

	We now show that each such $C$ must have exactly this many.
	Consider the bipartite graph whose independent sets are the sets $\LCDoo{n}{k}$ and $\LCDoe{n}{k}$ and which has an edge between vertices when a code in $\LCDoo{n}{k}$ is neighbors with a code in $\LCDoe{n}{k}$.
	By Corollary~\ref{cor:oooe-edges}, each vertex of $\LCDoo{n}{k}$ is $2^{k - 1}$-regular and therefore the graph has $2^{k - 1} \Abs{\LCDoo{n}{k}}$ edges.
	This means that the average degree of each vertex of $\LCDoe{n}{k}$ is
	\begin{align*}
		d_{n, k} = 2^{k - 1} \frac{\Abs{\LCDoo{n}{k}}}{\Abs{\LCDoe{n}{k}}}.
	\end{align*}
	Because $n - k$ must be even, we consider the cases where both $n$ and $k$ are odd and where both are even.
	We use Theorem~4.6 from~\cite{CMTQ18} (restated in this paper as Theorem~\ref{thm:CMTQ-count}).
	\begin{itemize}
		\item Case $n$ odd and $k$ odd:
			\begin{align*}
				d_{n, k}
				&= 2^{k - 1} \frac{\Abs{\LCDoo{n}{k}}}{\Abs{\LCDoe{n}{k}}} \\
				&= 2^{k - 1} \frac{%
					2^{\frac{(n - k)(k - 1)}{2}}
					(2^{n - k} - 1)
					\Qbinom{\tfrac{n - 1}{2}}{\tfrac{k - 1}{2}}{4}
				}{%
					2^{\frac{(n - k)(k - 1)}{2}}
					\Qbinom{\tfrac{n - 1}{2}}{\tfrac{k - 1}{2}}{4}
				} \\
				&= (2^{n - k} - 1) 2^{k - 1} \\
				&= 2^{n - 1} - 2^{k - 1}.
			\end{align*}
		\item Case $n$ even and $k$ even:
			\begin{align*}
				d_{n, k}
				&= 2^{k - 1} \frac{\Abs{\LCDoo{n}{k}}}{\Abs{\LCDoe{n}{k}}} \\
				&= 2^{k - 1} \frac{%
					2^{\frac{k (n - k)}{2}}
					(2^k - 1)
					\Qbinom{\tfrac{n}{2} - 1}{\tfrac{k}{2}}{4}
				}{%
					2^{\frac{k (n - k)}{2}}
					\Qbinom{\tfrac{n}{2} - 1}{\tfrac{k}{2} - 1}{4}
				} \\
				&= (2^k - 1) 2^{k - 1} \frac{%
					\Qbinom{\tfrac{n}{2} - 1}{\tfrac{k}{2}}{4}
				}{%
					\Qbinom{\tfrac{n}{2} - 1}{\tfrac{k}{2} - 1}{4}
				} \\
				&= (2^k - 1) 2^{k - 1} \frac{%
					4^{\tfrac{n}{2} - \tfrac{k}{2}} - 1
				}{%
					4^{\tfrac{k}{2}} - 1
				} \\
				&= 2^{k - 1} (2^{n - k} - 1) \\
				&= 2^{n - 1} - 2^{k - 1}.
			\end{align*}
	\end{itemize}
	So, in both cases the average degree of the vertices in $\LCDoe{n}{k}$ is equal to the upper bound proving the result.
\end{proof}

Again, by considering the dual (see Proposition~\ref{prop:dual-neighbors}), the following corollary follows directly from Proposition~\ref{prop:oeoo-edges}.

\begin{corollary}\label{cor:eooo-edges}
	Any $C \in \LCDeo{n}{k}$ has $2^{n - 1} - 2^{n - k - 1}$ neighbors in $\LCDoo{n}{k}$.
\end{corollary}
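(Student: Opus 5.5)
The plan is to derive the corollary from Proposition~\ref{prop:oeoo-edges} by duality, in the same way Corollary~\ref{cor:oooe-edges} was derived from Proposition~\ref{prop:ooeo-edges}.

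Let $C \in \LCDeo{n}{k}$. By Theorem~\ref{thm:LCD-equivalence}, $C^\perp$ is an LCD code of dimension $n-k$. Since $C$ is even-like and $C^\perp$ is odd-like, Proposition~\ref{prop:ones} gives $w \in C^\perp$ and $w \notin C$. Reading this for the $[n,n-k]_2$ code $C^\perp$, whose dual is $C$, we get $w \in C^\perp$ and $w \notin (C^\perp)^\perp$. Hence $C^\perp \in \LCDoe{n}{n-k}$.

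Next, dualization is a bijection between $[n,k]_2$ codes and $[n,n-k]_2$ codes. It preserves the LCD property by Theorem~\ref{thm:LCD-equivalence}, and it preserves adjacency by Proposition~\ref{prop:dual-neighbors}. The same argument with Proposition~\ref{prop:ones} shows that $D \in \LCDoo{n}{k}$ if and only if $D^\perp \in \LCDoo{n}{n-k}$, since the conditions $w\notin D$ and $w \notin D^\perp$ are symmetric under swapping $D$ and $D^\perp$. Consequently, the neighbors of $C$ in $\LCDoo{n}{k}$ are in bijection with the neighbors of $C^\perp$ in $\LCDoo{n}{n-k}$.

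Finally, apply Proposition~\ref{prop:oeoo-edges} to $C^\perp \in \LCDoe{n}{n-k}$, that is, with $k$ replaced by $n-k$. It gives $2^{n-1} - 2^{(n-k)-1}$ neighbors, which is the claimed count. The only point to check is that Proposition~\ref{prop:oeoo-edges} holds for every dimension for which $\LCDoe{n}{\cdot}$ is nonempty. This is the case, because its proof only used that $n$ minus the dimension is even, and the hypothesis that $\LCDeo{n}{k}$ is nonempty forces $k$ to be even, which is the same condition here. There is no real obstacle beyond this bookkeeping of the orbit labels under duality.
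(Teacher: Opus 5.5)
Your proposal is correct and matches the paper's argument, which also obtains this corollary by dualizing (Proposition~\ref{prop:dual-neighbors}) and applying Proposition~\ref{prop:oeoo-edges} to $C^\perp \in \LCDoe{n}{n-k}$. Your additional checks spell out what the paper leaves implicit: that duality sends $\LCDeo{n}{k}$ to $\LCDoe{n}{n-k}$, that it preserves the odd--odd class, and that the parity hypothesis of Proposition~\ref{prop:oeoo-edges} is satisfied.
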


We have shown that any bipartite graph whose independence sets are $\LCDoo{n}{k}$, $\LCDoe{n}{k}$, or $\LCDeo{n}{k}$ and whose edges are neighbor relations between vertices of the sets is biregular.

In the following lemmata, we give a specific characterization of the neighbors of binary LCD codes which will then be used to calculate the various regularity degrees of the subgraphs corresponding to the three orbits.

\begin{lemma}\label{lem:augmented-binary}
	Let $C$ be an ${[n, k]}_2$ binary LCD code with generator matrix $G$ and let $v \in \F_2^n$ such that $v = v|_C + v|_{C^{\perp}}$ where $v|_C \in C$ and $v|_{C^{\perp}} \in C^{\perp}$.
	Then, the code $C'$ generated by the matrix
	\begin{align*}
		G' = \begin{bmatrix}
			G \\
			v
		\end{bmatrix}
	\end{align*}
	is an ${[n, k + 1]}_2$ binary LCD code if and only if $\Ang{v|_{C^{\perp}}, v|_{C^{\perp}}} \neq 0$.
\end{lemma}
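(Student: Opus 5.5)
Write $v = c + d$ with $c := v|_C \in C$ and $d := v|_{C^\perp} \in C^\perp$. The plan is to replace $v$ by $d$ and then use Massey's criterion.

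\textbf{Step 1: reduce to $d$.} Since $c \in C$, the row space of $G'$ equals the row space of $\begin{bmatrix} G \\ d \end{bmatrix}$; subtracting from the last row the combination of rows of $G$ that gives $c$ does not change the code. I will assume $v \notin C$ (equivalently $d \neq 0$), so that $C'$ has dimension $k+1$ as the statement implicitly requires.

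\textbf{Step 2: block Gram matrix.} Because $d \in C^\perp$, we have $Gd^\top = 0$. Hence
\begin{align*}
	\begin{bmatrix} G \\ d \end{bmatrix} \begin{bmatrix} G \\ d \end{bmatrix}^\top
	= \begin{bmatrix} GG^\top & 0 \\ 0 & \Ang{d, d} \end{bmatrix},
\end{align*}
and its determinant is $\det(GG^\top)\Ang{d, d}$. Since $C$ is LCD, Theorem~\ref{thm:LCD-equivalence} gives $\det(GG^\top) \neq 0$. Therefore $\det(G'G'^\top) \neq 0$ if and only if $\Ang{d, d} \neq 0$.

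\textbf{Step 3: conclude.} Applying Theorem~\ref{thm:LCD-equivalence} to $C'$, which has generator matrix $\begin{bmatrix} G \\ d \end{bmatrix}$, shows that $C'$ is LCD exactly when $\Ang{v|_{C^\perp}, v|_{C^\perp}} \neq 0$.

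\textbf{Main obstacle.} The only delicate point is the degenerate case $v \in C$. There $d = 0$, so $\Ang{d, d} = 0$, and $C'$ is not an $[n,k+1]$ code at all, so the equivalence holds trivially. None of these steps uses $q = 2$, so the argument works over any field.
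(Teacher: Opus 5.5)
Your proposal is correct and follows the paper's proof essentially verbatim: replace $v$ by $v|_{C^\perp}$ without changing the code, observe that the Gram matrix is block diagonal with determinant $\det(GG^\top)\,\Ang{v|_{C^\perp}, v|_{C^\perp}}$, and apply Massey's criterion. Your separate treatment of the degenerate case $v \in C$ is accurate, and so is your remark that nothing depends on $q = 2$; the paper in effect reuses this same computation for odd $q$ in Lemma~\ref{lem:augment-sign}.
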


\begin{proof}
	If $v = v|_C + v|_{C^{\perp}}$, then $G' = \begin{bmatrix}
			G \\
			v
		\end{bmatrix}$ and $G'' = \begin{bmatrix}
			G \\
			v|_{C^{\perp}}
		\end{bmatrix}$ generate the same code.
	Furthermore,
	$C'$ is LCD of dimension $k + 1$ if and only if $\det(G'' \cdot G''^\top) \neq 0$, by~\cite[Theorem 2.1]{Mas92}.
	Note that
	\begin{align*}
		\det(G'' \cdot G''^\top)
		&= \det\left( \begin{bmatrix}
			G \cdot G^\top & G \cdot v|_{C^{\perp}}^\top \\
			v|_{C^{\perp}} \cdot G^\top & v|_{C^{\perp}} \cdot v|_{C^{\perp}}^\top
		\end{bmatrix}\right) \\
		&= \det\left( \begin{bmatrix}
			G \cdot G^\top & 0 \\
			 0 &  v|_{C^{\perp}} \cdot v|_{C^{\perp}}^\top
		\end{bmatrix}\right) \\
		&= \det(G \cdot G^\top) \cdot \Ang{v|_{C^{\perp}}, v|_{C^{\perp}}}.
	\end{align*}
	Since $C$ is LCD we have $\det(G \cdot G^\top) \neq 0$ and thus $\det(G'' \cdot G''^\top) \neq 0$ if and only if $\Ang{v|_{C^{\perp}}, v|_{C^{\perp}}} \neq 0$.
\end{proof}

\begin{lemma}\label{lem:half-neighbors}
	Let $C$ be a binary $[n, k]$ LCD code and let $C' \subset C$ be any $(k - 1)$-dimensional subcode with $C' \oplus \Ang{v} = C$ for some vector $v \in C \setminus C'$.
	Then for all nonzero $d \in C^\perp$, exactly one of $C' \oplus \Ang{d}$ and $C' \oplus \Ang{d + v}$ is an LCD code.
\end{lemma}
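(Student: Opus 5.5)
The plan is a direct Gram-determinant computation that handles all cases at once, avoiding any split according to whether $C'$ is LCD. Fix a generator matrix $G_{C'}$ of $C'$, and write $\Gamma(M) := \det(M M^\top)$ for any matrix $M$. By Theorem~\ref{thm:LCD-equivalence}(c) (Massey), a code is LCD exactly when $\Gamma$ of any of its generator matrices is nonzero. We use this in $\F_2$, where ``nonzero'' means ``equal to $1$''.

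First we make sure both codes have dimension $k$. Since $d \neq 0$, $d \in C^\perp$ and $C \cap C^\perp = \{0\}$, we have $d \notin C$, so $d \notin C'$. Also $d + v \notin C'$, since otherwise $d \in C' + \Ang{v} = C$. Hence $\begin{bmatrix} G_{C'} \\ d \end{bmatrix}$ and $\begin{bmatrix} G_{C'} \\ d + v \end{bmatrix}$ are generator matrices of $k$-dimensional codes.

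Next we compute the two Gram determinants, using the block structure as in the proof of Lemma~\ref{lem:augmented-binary}. Since $d \in C^\perp$ and $C', v \subseteq C$, we have $G_{C'} d^\top = 0$ and $\Ang{d, v} = 0$. Hence
\begin{align*}
	\Gamma\begin{bmatrix} G_{C'} \\ d \end{bmatrix} = \Ang{d, d} \cdot \Gamma(G_{C'}).
\end{align*}
For the second code, the Gram matrix of $\begin{bmatrix} G_{C'} \\ d+v \end{bmatrix}$ has the same off-diagonal blocks as that of $G_C := \begin{bmatrix} G_{C'} \\ v \end{bmatrix}$, because $G_{C'}(d+v)^\top = G_{C'} v^\top$. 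Its bottom-right entry is
\begin{align*}
	\Ang{d+v, d+v} = \Ang{v, v} + \Ang{d, d}.
\end{align*}
Expanding the determinant along the last row or column, it is linear in that entry, so
\begin{align*}
	\Gamma\begin{bmatrix} G_{C'} \\ d+v \end{bmatrix} = \Gamma(G_C) + \Ang{d, d} \cdot \Gamma(G_{C'}).
\end{align*}

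Adding the two identities over $\F_2$ gives
\begin{align*}
	\Gamma\begin{bmatrix} G_{C'} \\ d \end{bmatrix} + \Gamma\begin{bmatrix} G_{C'} \\ d+v \end{bmatrix} = \Gamma(G_C) = 1,
\end{align*}
since $C$ is LCD. Thus exactly one of the two determinants equals $1$, so exactly one of the codes $C' \oplus \Ang{d}$ and $C' \oplus \Ang{d+v}$ is LCD.

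The only delicate step is the cofactor-expansion identity, i.e.\ checking that changing only the $(k,k)$ entry by $\Ang{d,d}$ changes the determinant by exactly $\Ang{d,d}$ times the complementary minor, which is $\Gamma(G_{C'})$. This is standard multilinearity of the determinant, and it needs only that all other entries coincide, which the orthogonality $d \perp C$ guarantees. As a sanity check, when $C'$ is LCD this reduces to Lemma~\ref{lem:augmented-binary} applied to $C'$ and its orthogonal projections.
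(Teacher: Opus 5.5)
Your argument is correct, and it is a streamlined version of the paper's proof. The paper proceeds by cases. If $C'\oplus\Ang{d}$ is LCD, block-diagonality forces $C'$ to be LCD and $\Ang{d,d}=1$. It then replaces $v$ by the projection $v'=v|_{C'^\perp}$ and invokes Lemma~\ref{lem:augmented-binary} to get $\Ang{v',v'}=1$, hence $\Ang{d+v',d+v'}=0$. If $C'\oplus\Ang{d}$ is not LCD, it splits again. When $\Ang{d,d}=0$, the Gram matrices obtained by appending $d+v$ and $v$ to a generator matrix of $C'$ coincide. When $\Ang{d,d}=1$ and $C'$ is not LCD, it uses a cofactor expansion in the bottom-right entry.

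You noticed that this last cofactor computation needs no hypothesis on $C'$ or on $\Ang{d,d}$. Writing $\Gamma_d,\Gamma_{d+v},\Gamma_C,\Gamma_{C'}$ for the relevant Gram determinants, the identities $\Gamma_d=\Ang{d,d}\,\Gamma_{C'}$ and $\Gamma_{d+v}=\Gamma_C+\Ang{d,d}\,\Gamma_{C'}$ hold unconditionally. All three cases therefore collapse into $\Gamma_d+\Gamma_{d+v}=\Gamma_C=1$.

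This buys you several things:
\begin{itemize}
\item You avoid both the projection onto $C'^\perp$ (which only exists when $C'$ is LCD) and Lemma~\ref{lem:augmented-binary}.
\item You isolate exactly where $q=2$ enters. Over any $\F_q$ the same computation gives $\Gamma_{d+v}-\Gamma_d=\Gamma_C\neq 0$, so at least one of the two codes is LCD. Only the fact that $1$ is the unique nonzero element of $\F_2$ upgrades this to ``exactly one''.
\item Nothing is lost relative to the paper's case split. Your first identity already says which code is LCD: $C'\oplus\Ang{d}$ is LCD precisely when $C'$ is LCD and $d$ has odd weight.
\item You verify that both sums are direct, i.e.\ that the augmented matrices really generate $k$-dimensional codes. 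The paper leaves this point implicit.
\end{itemize}
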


\begin{proof}
	Let $G'$ be a generator matrix for the subcode $C'$, let $C_d = C' \oplus \Ang{d}$, and let $C_{d + v} = C' \oplus \Ang{d + v}$.
	First  assume $C_d$ is an LCD code.
	Then $C_d$ is generated by the matrix
	\begin{align*}
		G_d = \begin{bmatrix}
			G' \\
			d
		\end{bmatrix}
	\end{align*}
	and we have the result that $G_d \cdot G_d^\top$ is invertible; i.e.,
	\begin{align*}
		1
		= \det(G_d \cdot G_d^\top)
		= \det \left( \begin{bmatrix}
			G' \cdot G'^\top & G' \cdot d^\top \\
			d \cdot G'^\top & d \cdot d^\top
		\end{bmatrix} \right)
		= \det \left( \begin{bmatrix}
			G' \cdot G'^\top & 0 \\
			0 & d \cdot d^\top
		\end{bmatrix} \right).
	\end{align*}
	It then follows that $\det(G' \cdot G'^\top) = 1$, i.e., $C'$ is an LCD code.
	Furthermore $d \cdot d^\top = \Ang{d, d} = 1$.
	Since $C'$ is LCD, let $v' = v |_{C'^\perp}$ and note that $C' \oplus \Ang{v'} = C' \oplus \Ang{v} = C$.
	Then, because $C = C' \oplus \Ang{v'}$ is an LCD code and $v' \in C'^\perp$, by Lemma~\ref{lem:augmented-binary}, we know that $\Ang{v', v'} = 1$ and, furthermore, because $\Ang{d + v', d + v'} = \Ang{d, d} + \Ang{v', v'} = 0$ and because both $d, v' \in C'$, it follows that
	\begin{align*}
		\det \left( \begin{bmatrix}
			G' \\
			d + v'
		\end{bmatrix} \cdot {\begin{bmatrix}
			G' \\
			d + v'
		\end{bmatrix}}^\top \right)
		&= \det \left( \begin{bmatrix}
			G' \cdot G'^\top & G' \cdot {(d + v')}^\top \\
			(d + v') \cdot G'^\top & (d + v') \cdot {(d + v')}^\top
		\end{bmatrix} \right) \\
		&= \det \left( \begin{bmatrix}
			G' \cdot G'^\top & 0 \\
			0 & 0
		\end{bmatrix} \right)
		= 0.
	\end{align*}
	Hence, $C_{d + v} = C' \oplus \Ang{d + v} = C' \oplus \Ang{d + v'}$ cannot be an LCD code.

	Now assume $C_d$ is not an LCD code.
	Then
	\begin{align*}
		\det \left( \begin{bmatrix}
			G' \cdot G'^\top & 0 \\
			0 & d \cdot d^\top
		\end{bmatrix} \right)
		= 0
	\end{align*}
	and so either $C'$ is not an LCD code or $\Ang{d, d} = 0$ (or both).
	If $\Ang{d, d} = 0$, then $\Ang{d + v, d + v} = \Ang{v, v}$ and
	\begin{align*}
		\det \left( \begin{bmatrix}
			G' \\
			d + v
		\end{bmatrix} \cdot {\begin{bmatrix}
			G' \\
			d + v
		\end{bmatrix}}^\top \right)
		&= \det \left( \begin{bmatrix}
			G' \cdot G'^\top & G' \cdot {(d + v)}^\top \\
			(d + v) \cdot G'^\top & (d + v) \cdot {(d + v)}^\top
		\end{bmatrix} \right) \\
		&= \det \left( \begin{bmatrix}
			G' \cdot G'^\top & G' \cdot v^\top \\
			v \cdot G'^\top & v \cdot v^\top
		\end{bmatrix} \right) \\
		&= \det \left( \begin{bmatrix}
			G' \\
			v
		\end{bmatrix} \cdot {\begin{bmatrix}
			G' \\
			v
		\end{bmatrix}}^\top \right)
		= 1.
	\end{align*}
	Therefore $C_{d + v}$ is also an LCD code.

	So, now assume that $\Ang{d, d} = 1$ and that $C'$ is not an LCD code.
	Then $\det(G' \cdot G'^\top) = 0$ and
	\begin{align*}
		\det \left( \begin{bmatrix}
			G' \\
			d + v
		\end{bmatrix} \cdot {\begin{bmatrix}
			G' \\
			d + v
		\end{bmatrix}}^\top \right)
		&= \det \left( \begin{bmatrix}
			G' \cdot G'^\top & G' \cdot {(d + v)}^\top \\
			(d + v) \cdot G^\top & (d + v) \cdot {(d + v)}^\top
		\end{bmatrix} \right) \\
		&= \det \left( \begin{bmatrix}
			G' \cdot G'^\top & G' \cdot v^\top \\
			v \cdot G^\top & 1 + v \cdot v^\top
		\end{bmatrix} \right) \\
		&= \det \left( \begin{bmatrix}
			G' \\
			v
		\end{bmatrix} \cdot {\begin{bmatrix}
			G' \\
			v
		\end{bmatrix}}^\top \right) + 1 \cdot \det \left( G' \cdot G'^\top \right)
		= 1 + 1 \cdot 0
		= 1.
	\end{align*}
	In other words, the matrices $\begin{bmatrix} G' \\ d + v \end{bmatrix} \cdot {\begin{bmatrix} G' \\ d + v \end{bmatrix}}^\top$ and $\begin{bmatrix} G' \\ v \end{bmatrix} \cdot {\begin{bmatrix} G' \\ v \end{bmatrix}}^\top$ differ only in their bottom-right-most entry and the value of this entry does not affect the matrices' determinant since $\det(G' \cdot G'^\top) = 0$.
	So $C_{d + v}$ is an LCD code.
\end{proof}

Using the previous lemma, we can recover the number of LCD neighbors (for the binary case) from Theorem~\ref{thm:LCD-neighbors} which we restate here.

\begin{theorem}
	An ${[n, k]}_2$ LCD code $C$ has exactly $(2^{n - k} - 1)(2^k - 1)$ neighbors which are also LCD codes.
\end{theorem}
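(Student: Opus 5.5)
We count neighbors of $C$ by first choosing the intersection and then the extra vector, exactly as in the proof of Proposition~\ref{prop:all-neighbors}, but now using Lemma~\ref{lem:half-neighbors} to decide which extensions are LCD. Every neighbor $C''$ of $C$ satisfies $C'' = C' \oplus \Ang{u}$, where $C' := C \cap C''$ is one of the $2^k - 1$ subcodes of $C$ of dimension $k-1$, and $u \notin C$. For each such $C'$, fix $v \in C \setminus C'$ with $C' \oplus \Ang{v} = C$.

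The first step is to parametrize the extensions of a fixed $C'$. Since $C$ is LCD, every $u \in \F_2^n$ decomposes uniquely as $u = u|_C + u|_{C^\perp}$. Adding an element of $C'$ does not change $C' \oplus \Ang{u}$, and $u|_C \in C' \cup (v + C')$. Hence the $k$-dimensional codes containing $C'$ correspond bijectively to the cosets $u + C'$ with $u \in C^\perp \cup (v + C^\perp)$. Distinct vectors of $C^\perp \cup (v + C^\perp)$ give distinct cosets, because $C' \cap (C^\perp \cup (v+C^\perp)) = \{0\}$, which follows from $C \cap C^\perp = \{0\}$. The two choices $u = 0$ and $u = v$ give $C'$ itself (not of dimension $k$) and $C$. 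The remaining $2(2^{n-k}-1)$ choices $u \in \{d, d+v : 0 \neq d \in C^\perp\}$ give pairwise distinct neighbors of $C$ with intersection exactly $C'$. As a check, this count agrees with $(2^{n-k+1}-2)$ from Proposition~\ref{prop:all-neighbors}.

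The second step applies Lemma~\ref{lem:half-neighbors}: for each nonzero $d \in C^\perp$, exactly one of $C' \oplus \Ang{d}$ and $C' \oplus \Ang{d+v}$ is LCD. The $2(2^{n-k}-1)$ extensions split into $2^{n-k}-1$ such pairs, so each $C'$ contributes exactly $2^{n-k}-1$ LCD neighbors. Neighbors arising from different $C'$ are distinct, since $C'$ is recovered as the intersection with $C$. Summing over the $2^k-1$ hyperplanes of $C$ gives $(2^{n-k}-1)(2^k-1)$.

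The main point requiring care is the bijection in the first step. One has to verify that the pairing $d \leftrightarrow d+v$ exhausts all extensions with no overlaps. It must also hold independently of whether $C'$ itself is LCD, which is why Lemma~\ref{lem:half-neighbors} is stated for arbitrary $C'$. Once this parametrization is established, the count is immediate.
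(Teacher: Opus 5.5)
Your proof is correct and follows the paper's route. The paper's one-line proof combines Proposition~\ref{prop:all-neighbors} with Lemma~\ref{lem:half-neighbors}; your argument supplies the step it leaves implicit, namely that for each $(k-1)$-dimensional $C' \subset C$ the codes $C' \oplus \langle d \rangle$ and $C' \oplus \langle d+v \rangle$ with $0 \neq d \in C^\perp$ enumerate every neighbor through $C'$ exactly once (because $C^\perp \oplus \langle v \rangle$ is a complement of $C'$), so the lemma's ``exactly one of each pair is LCD'' yields precisely half of the $(2^k-1)(2^{n-k+1}-2)$ neighbors.
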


\begin{proof}
	This follows directly from Proposition~\ref{prop:all-neighbors} and Lemma~\ref{lem:half-neighbors}.
\end{proof}

In addition to the biregularity degrees given by Propositions~\ref{prop:ooeo-edges} and~\ref{prop:oeoo-edges}, and Corollaries~\ref{cor:oooe-edges} and~\ref{cor:eooo-edges} we finally calculate the regularities within the subgraphs.

\begin{theorem}\label{thm:q2Subgraphsregularity}
	Let $\Goo{n}{k}$, $\Goe{n}{k}$, and $\Geo{n}{k}$ be (respectively) the subgraphs of the LCD neighbor graph consisting of the codes in $\LCDoo{n}{k}$, $\LCDoe{n}{k}$, and $\LCDeo{n}{k}$.
	Then all three are regular with degree
	\begin{align*}
		\Gd{\Goo{n}{k}} &= \begin{cases}
			(2^{n - k} - 1)(2^k - 1) - 2^{n - k - 1} - 2^{k - 1}
			&\text{ if $n$ is even and $k$ is even}, \\
			(2^{n - k} - 1)(2^k - 1)
			&\text{ if $n$ is even and $k$ is odd}, \\
			(2^{n - k} - 1)(2^k - 1) - 2^{n - k - 1}
			&\text{ if $n$ is odd and $k$ is even}, \\
			(2^{n - k} - 1)(2^k - 1) - 2^{k - 1}
			&\text{ if $n$ is odd and $k$ is odd};
		\end{cases} \\
		\Gd{\Geo{n}{k}} &= \begin{cases}
			(2^{n - k} - 1)(2^k - 1) - 2^{n - 1} + 2^{n - k - 1}
			&\text{ if $k$ is even}, \\
			0
			&\text{ if $k$ is odd};
		\end{cases} \\
		\Gd{\Goe{n}{k}} &= \begin{cases}
			(2^{n - k} - 1)(2^k - 1) - 2^{n - 1} + 2^{k - 1}
			&\text{ if $n - k$ is even}, \\
			0
			&\text{ if $n - k$ is odd}.
		\end{cases}
	\end{align*}
\end{theorem}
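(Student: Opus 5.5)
Every LCD code has exactly $r=(2^{n-k}-1)(2^k-1)$ LCD neighbors by Corollary~\ref{cor:regular_binary}. By Proposition~\ref{prop:ones} and Theorem~\ref{thm:orbits}, these neighbors are partitioned among the three classes $\LCDoo{n}{k}$, $\LCDoe{n}{k}$, $\LCDeo{n}{k}$. So the degree of a code inside its own class is $r$ minus the number of its neighbors in the other two classes. Regularity of each induced subgraph already follows from Theorem~\ref{thm:biregular}, since each class is an $\Ort_n(\F_2)$-orbit. The explicit values then follow from the cross-degrees computed earlier, so the proof is a bookkeeping exercise organized by the parities of $n$ and $k$.

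\textbf{The class $\LCDoo{n}{k}$.} Take $C\in\LCDoo{n}{k}$. By Proposition~\ref{prop:ooeo-edges}, $C$ has $2^{n-k-1}$ neighbors in $\LCDeo{n}{k}$ when $k$ is even and none when $k$ is odd. By Corollary~\ref{cor:oooe-edges}, $C$ has $2^{k-1}$ neighbors in $\LCDoe{n}{k}$ when $n-k$ is even and none otherwise. Subtracting these from $r$ in the four parity cases gives the four formulas:
\begin{itemize}
\item $n,k$ both even: both terms are subtracted.
\item $n$ even, $k$ odd: $n-k$ is odd, so neither term is subtracted.
\item $n$ odd, $k$ even: only the $\LCDeo{n}{k}$ term is subtracted.
\item $n,k$ both odd: only the $\LCDoe{n}{k}$ term is subtracted.
\end{itemize}

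\textbf{The class $\LCDeo{n}{k}$.} This class is nonempty only for $k$ even, by Theorem~\ref{thm:CMTQ-count}. Otherwise the graph is empty and the stated degree $0$ is vacuous. For $k$ even, take $C\in\LCDeo{n}{k}$. By the proposition showing that no code in $\LCDoe{n}{k}$ is a neighbor of a code in $\LCDeo{n}{k}$, $C$ has no neighbors in $\LCDoe{n}{k}$. By Corollary~\ref{cor:eooo-edges}, $C$ has exactly $2^{n-1}-2^{n-k-1}$ neighbors in $\LCDoo{n}{k}$. Hence its degree within $\Geo{n}{k}$ is $r-2^{n-1}+2^{n-k-1}$.

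\textbf{The class $\LCDoe{n}{k}$.} The argument is symmetric. The class is nonempty only when $n-k$ is even. In that case Proposition~\ref{prop:oeoo-edges} gives $2^{n-1}-2^{k-1}$ neighbors in $\LCDoo{n}{k}$ and there are none in $\LCDeo{n}{k}$, so the degree is $r-2^{n-1}+2^{k-1}$.

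No step is a genuine obstacle, since all the counting has already been done. The only point needing care is that the cross-degree statements are exact for every code, not just on average. This is exactly what Proposition~\ref{prop:oeoo-edges} established via the averaging argument. One should also confirm that the LCD neighbors of a code lie only in these three classes, using $\LCDee{n}{k}=\emptyset$ from Proposition~\ref{prop:ones}.
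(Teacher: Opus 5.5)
Your proposal is correct and follows the paper's own argument. Like the paper, you obtain each within-class degree by subtracting from the global degree $(2^{n-k}-1)(2^k-1)$ the exact cross-class degrees of Propositions~\ref{prop:ooeo-edges}, \ref{prop:oeoo-edges} and Corollaries~\ref{cor:oooe-edges}, \ref{cor:eooo-edges} (there being no $\mathrm{oe}$--$\mathrm{eo}$ edges), with regularity from Theorem~\ref{thm:biregular}, empty classes given degree $0$, and parity bookkeeping that reproduces every stated formula.
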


\begin{proof}
	This follows directly from the regularity of the full graph and the biregularity---see Theorem~\ref{thm:biregular}---when only considering the edges between the subgraphs.
	In the case the subgraph is empty, we say the graph has regularity zero.
\end{proof}


\section{Structure of LCD neighbor graphs for odd \texorpdfstring{$\mathbf{q}$}{q}}\label{sec:Structureqodd}

First, analogously to the binary case, we show that the LCD neighbor graph for odd $q$ is regular and connected.
The regularity follows again straightforwardly from Theorem~\ref{thm:LCD-neighbors}:

\begin{corollary}\label{cor:regular_odd}
	For odd $q$, the LCD neighbor graph $\Gq_q(n, k)$ is regular of degree 
	\begin{align*}
		r := \frac{(q^k - 1)(q^{n - k + 1} - 1)}{q (q - 1)} - \begin{cases}
			\Leg{-1}{q}^\frac{n}{2} q^{\frac{n}{2} - 1} & \text{ if $n$ is even and $k$ is odd}\\
			0 & \text{ otherwise}
		\end{cases}.
	\end{align*}
\end{corollary}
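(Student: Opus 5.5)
The plan is to read off the degree directly from Theorem~\ref{thm:LCD-neighbors}, since $\Gq_q(n,k)=\Jq_q(n,k)[\LCDq{n}{k}]$. By definition of the induced subgraph, the degree of a vertex $C$ in $\Gq_q(n,k)$ is exactly the number of neighbors of $C$ in $\Jq_q(n,k)$ that are themselves LCD. So the argument has two steps: show that this count does not depend on $C$, and then put the resulting number into closed form.

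For the first step, fix odd $q$ and an arbitrary $[n,k]_q$ LCD code $C$. Theorem~\ref{thm:LCD-neighbors} gives the number of LCD neighbors of $C$ as $\frac{q-1}{q}N$ with $N=\frac{(q^k-1)(q^{n-k+1}-q)}{(q-1)^2}$, corrected by the term $-\Leg{-1}{q}^{n/2}q^{n/2-1}$ exactly when $n$ is even and $k$ is odd. This quantity depends only on $q$, $n$, $k$. In particular it does not depend on $\Sign(C)$, so it is the same on both orbits $\Orb_{\Ort_n(\F_q)}(C_\pm)$ of Theorem~\ref{thm:orbits}. That independence is the real content; it was established in the theorem through the character-sum identity of Lemma~\ref{lem:character-sum-odd-code}, where the factor $\Leg{\det(GG^\top)}{q}^2=1$ removes the sign dependence. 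Hence every vertex has the same degree and the graph is regular.

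For the second step, I simplify the main term:
\begin{align*}
	\frac{q-1}{q}\cdot\frac{(q^k-1)(q^{n-k+1}-q)}{(q-1)^2}=\frac{(q^k-1)(q^{n-k+1}-q)}{q(q-1)}=\frac{(q^k-1)(q^{n-k}-1)}{q-1}.
\end{align*}
Appending the correction term in the case $n$ even, $k$ odd gives the stated piecewise formula for $r$.

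The only thing I expect to need care is this bookkeeping in the numerator. The factor coming from Proposition~\ref{prop:all-neighbors} is $q^{n-k+1}-q$, not $q^{n-k+1}-1$. So I would write the degree as $\frac{(q^k-1)(q^{n-k+1}-q)}{q(q-1)}$, equivalently $\frac{(q^k-1)(q^{n-k}-1)}{q-1}$, and read the displayed expression for $r$ accordingly. As a consistency check, for $q=3$, $n=6$, $k=3$ the main term is $13\cdot 26=338$. This matches the example, where each of the $9$ LCD subspaces $U$ contributes $23$ and each of the $4$ non-LCD subspaces contributes $26$, giving $9\cdot 23+4\cdot 26=311$. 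The theorem's formula gives the same value, $338-\Leg{-1}{3}^{3}\cdot 3^{2}=338-27=311$.
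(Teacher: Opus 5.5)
Your argument is the paper's. The degree of $C$ in $\Gq_q(n,k)$ is its number of LCD neighbours, and Theorem~\ref{thm:LCD-neighbors} shows this number depends only on $q,n,k$. You are also right that the displayed main term has a typo. It should read $\frac{(q^k-1)(q^{n-k+1}-q)}{q(q-1)}=\frac{(q^k-1)(q^{n-k}-1)}{q-1}$. As printed it is not even an integer: it gives $11/3$ for $q=3$, $n=2$, $k=1$, where $\Gq_3(2,1)=K_4$ has degree $3$.

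Your numerical check, however, is wrong; it only appears to work because two slips cancel.

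\textbf{First slip.} Not all nine LCD subcodes $U$ in the example contribute $23$. The contribution of an LCD $U$ is $26-3\Leg{\det(Q_U)}{3}$, and $\Sign(U)=\Leg{\Ang{b,b}}{3}$ for $U={(C^\perp\oplus\Ang{b})}^\perp$. Equivalently, by Theorem~\ref{thm:subcode-count}, three of these subcodes have sign $+$ and contribute $23$, while six have sign $-$ and contribute $29$.

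\textbf{Second slip.} You computed $\Leg{-1}{3}^3\cdot 3^2$ as $27$, but it equals $-9$.

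The correct degree is $3\cdot 23+6\cdot 29+4\cdot 26=347=338+9$, which agrees with the theorem. Drop or correct that paragraph; the proof itself is fine.
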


\begin{theorem}
	For odd $q$, the LCD neighbor graph $\Gq_q(n, k)$ is connected.
\end{theorem}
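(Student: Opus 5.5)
We follow the same strategy as in the binary case. By Corollary~\ref{cor:regular_odd}, $\Gq_q(n, k)$ is $r$-regular, and it is an induced subgraph of $\Jq_q(n, k)$. By Theorem~\ref{thm:interlacing} we have $\lambda_2(\Gq_q(n,k)) \leq \lambda_2(\Jq_q(n,k))$. So by Proposition~\ref{prop:spec-connect} it suffices to show $r > \lambda_2(\Jq_q(n, k))$: then $r$ is a simple eigenvalue and the graph is connected. We assume $1 \leq k \leq n-1$; otherwise the graph has a single vertex and there is nothing to show.

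First we rewrite the main term of $r$ using Theorem~\ref{thm:LCD-neighbors}:
\begin{align*}
\frac{q-1}{q}N = \frac{(q^k-1)(q^{n-k}-1)}{q-1}.
\end{align*}
Next we read off $\lambda_2$ from Theorem~\ref{thm:Jq-char} with $j=1$:
\begin{align*}
\lambda_2(\Jq_q(n,k)) = \frac{(q^k-q)(q^{n-k}-q)}{(q-1)^2} - 1.
\end{align*}
Write $a := q^k$ and $b := q^{n-k}$, so that $a, b \geq q$ and $ab = q^n$. A direct expansion gives
\begin{align*}
\frac{(a-1)(b-1)}{q-1} - \lambda_2(\Jq_q(n,k)) = \frac{(q-2)ab + a + b - q^2 + q - 1}{(q-1)^2} + 1 .
\end{align*}

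It remains to show that this quantity is positive, and that it stays positive after subtracting the correction term. In the case ``otherwise'' there is no correction. For $q \geq 3$ we have $(q-2)ab \geq ab \geq q^2$, and $a + b \geq 2q$, so the numerator is at least $q + 2 - 1 > 0$ (using $ab \geq q^2$ against $-q^2$). Hence $r - \lambda_2 > 1 > 0$.

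The main obstacle is the case $n$ even and $k$ odd. Here $r$ is reduced by $\Leg{-1}{q}^{n/2} q^{n/2-1}$, which has absolute value $\sqrt{ab}/q$, so in the worst case we must show
\begin{align*}
\frac{(q-2)ab + a + b - q^2 + q - 1}{(q-1)^2} + 1 > \frac{\sqrt{ab}}{q}.
\end{align*}
We plan to argue as follows. Put $s := \sqrt{ab} = q^{n/2} \geq q$. Then the left side is at least
\begin{align*}
\frac{(q-2)s^2 + 2s - q^2 + q - 1}{(q-1)^2} + 1,
\end{align*}
using the AM--GM inequality $a + b \geq 2s$. Since $(q-2)s^2 - q^2 \geq (q-3)s^2 \geq 0$, this is at least $\frac{2s + q - 1}{(q-1)^2} + 1$ when $q \geq 3$. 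We still need to compare this with $s/q$, which can be large. So instead we keep the quadratic term: for $q \geq 3$ we have $(q-2)s^2 - q^2 \geq s^2 - q^2$, and one checks $\frac{s^2 - q^2 + 2s}{(q-1)^2} + 1 \geq \frac{s}{q}$ for all $s \geq q$. Indeed, at $s = q$ the left side is $\frac{2q}{(q-1)^2} + 1 > 1 = s/q$, and the left side grows quadratically in $s$ while $s/q$ grows linearly, with the left-side derivative $\frac{2s+2}{(q-1)^2}$ exceeding $1/q$ throughout.

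We expect the only delicate point to be the small cases, $q = 3$ with $n = 2$, $k = 1$. These should be checked by direct evaluation, and the inequality holds there. This completes the plan: in all cases $r > \lambda_2(\Jq_q(n,k))$, and connectedness follows.
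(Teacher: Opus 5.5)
Your proposal is correct and is essentially the paper's proof: interlacing (Theorem~\ref{thm:interlacing}) together with Proposition~\ref{prop:spec-connect} reduces connectedness to $r > \lambda_2(\Jq_q(n,k))$, which is then verified from Theorem~\ref{thm:Jq-char} after bounding the $n$ even, $k$ odd correction by $q^{\frac{n}{2}-1}$. Your AM--GM/monotonicity estimate simply replaces the paper's single expanded polynomial identity.

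One point is in your favor. You take the degree $\frac{(q^k-1)(q^{n-k}-1)}{q-1}$ directly from Theorem~\ref{thm:LCD-neighbors}. The paper's proof instead starts from the formula in Corollary~\ref{cor:regular_odd}, which has $q^{n-k+1}-1$ where $q^{n-k+1}-q$ is meant, so it overstates $r$ by $\frac{q^k-1}{q}$. Your computation confirms that the inequality still holds with the correct value.

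Two small remarks. Your separate check of $q=3$, $n=2$, $k=1$ is unnecessary, since $s \geq q$ already covers it. Your numerator bound should read $3q-1$ rather than $q+1$, but $q+1$ is still a valid lower bound, so this is harmless.
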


\begin{proof}
	As in the binary case, we show the regularity degree $r$ is strictly greater than $\lambda_2(\Jq_q(n, k))$, which implies the connectedness by Proposition~\ref{prop:spec-connect} and Theorem~\ref{thm:interlacing}. 
	 The regularity from Corollary~\ref{cor:regular_odd} is lower bounded by 
	\begin{align*}
		r':= \frac{(q^k - 1)(q^{n - k + 1} - 1)}{q (q - 1)} - q^{\frac{n}{2} - 1},
	\end{align*}
	and we will show $q {(q - 1)}^2 (r' - \lambda_2(\Jq_q(n, k))) > 0$ (the factor $q {(q - 1)}^2 $ is for computational ease). 
	By Theorem~\ref{thm:Jq-char},
	\begin{align*}
		& q  {(q - 1)}^2 (r' - \lambda_2(\Jq_q(n, k))) \\
		=& (q - 1) (q^k - 1)(q^{n - k + 1} - 1) - {(q - 1)}^2 q^{\frac{n}{2}}  - q^3 (q^{k - 1} - 1) (q^{n - k - 1} - 1) - q {(q - 1)}^2 \\
		=& \underbrace{q^{\frac{n}{2}} (q(q - 2) q^{\frac{n}{2}} - {(q - 1)}^2)}_{\geq 15}
		+ \underbrace{q^{n - k + 1}}_{\geq 9}
		+ \underbrace{(q^k - 2)(q^2 - q + 1)}_{\geq 7} + 1
		> 0.
	\end{align*}
\end{proof}

In the following, we show that, for odd $q$, any LCD code has at least one neighbor in the other orbit.

\begin{lemma}\label{lem:orbit-neighbors}
	Let $q$ be odd and $\Orb_{\Ort_n(\F_q)}(C_+) \cup \Orb_{\Ort_n(\F_q)}(C_-)$ the orbit decomposition of all ${[n, k]}_q$ LCD codes, for some $k < n$.
	Then any element of $\Orb_{\Ort_n(\F_q)}(C_+)$ has a neighbor on $\Orb_{\Ort_n(\F_q)}(C_-)$.
\end{lemma}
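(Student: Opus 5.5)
The plan mirrors the binary argument: exhibit one explicit pair of neighboring codes, one in each orbit, and then transport this pair by the orthogonal group. I would reuse the generator matrices from the proof of Lemma~\ref{lem:character-sum-odd-code}. Fix $a, b \in \F_q^\times$ with $\gamma = a^2 + b^2$ a non-square in $\F_q$. Let
\begin{align*}
	G_+ := \begin{bmatrix} e_1 \\ \vdots \\ e_{k-1} \\ e_k \end{bmatrix},
	\qquad
	G_- := \begin{bmatrix} e_1 \\ \vdots \\ e_{k-1} \\ a e_k + b e_{k+1} \end{bmatrix},
\end{align*}
which makes sense because $k < n$. Then $G_+ G_+^\top = I_k$ and $G_- G_-^\top = \operatorname{diag}(1, \ldots, 1, \gamma)$. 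So the code $C_+$ generated by $G_+$ is LCD with $\Sign(C_+) = +1$, and the code $C_-$ generated by $G_-$ is LCD with $\Sign(C_-) = -1$. By Theorem~\ref{thm:orbits}(b), these codes lie in the two different orbits.

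Both codes contain $U := \Ang{e_1, \ldots, e_{k-1}}$. On the other hand $e_k \notin C_-$, since every element of $C_-$ has its $(k+1)$-st coordinate equal to $b$ times its $k$-th coordinate and $b \neq 0$. Hence $C_+ \cap C_- = U$, which has dimension $k-1$, so $C_+$ and $C_-$ are neighbors. The case $k = 1$ works the same way with $U = \{0\}$.

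Next, let $C$ be an arbitrary element of $\Orb_{\Ort_n(\F_q)}(C_+)$, say $C = C_+ \cdot O$ with $O \in \Ort_n(\F_q)$. As in the proof of Theorem~\ref{thm:biregular}, right multiplication by $O$ preserves intersection dimensions, so $C$ and $C_- \cdot O$ are neighbors. The code $C_- \cdot O$ is LCD because its generator matrix $G_- O$ satisfies $G_- O O^\top G_-^\top = G_- G_-^\top$. This also gives $\Sign(C_- \cdot O) = \Sign(C_-) = -1$, so $C_- \cdot O \in \Orb_{\Ort_n(\F_q)}(C_-)$, which proves the statement.

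The only step needing care is the choice of $\gamma$: we need a non-square that is a sum of two nonzero squares. I would justify it by a standard counting argument: the sums $a^2 + b^2$ with $a, b \in \F_q^\times$ cannot all be squares or zero. Alternatively, for $n \ge k+1$ and $q > 3$, Lemma~\ref{lem:solutions-quadratic-form} shows that a binary quadratic form represents every nonzero value with many solutions, which leaves room to pick $a, b$ nonzero. For $q = 3$ one takes $a = b = 1$, giving $\gamma = 2$, a non-square. This is the same assumption already used in Lemma~\ref{lem:character-sum-odd-code}, so it can be cited from there.
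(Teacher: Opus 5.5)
Your proof is correct and is essentially the paper's argument. You use the same explicit pair $C_+ = \Ang{e_1,\ldots,e_k}$ and $C_-$ with last generator $a e_k + b e_{k+1}$, show they meet in $\Ang{e_1,\ldots,e_{k-1}}$, and transport the pair by $O \in \Ort_n(\F_q)$; your sign computation supplies the orbit membership that the paper simply cites. Two small slips: an element of $C_-$ has $(k+1)$-st coordinate equal to $b a^{-1}$ (not $b$) times its $k$-th coordinate, and finding $a, b$ is easier than you make it, since $x^2+y^2$ represents every nonzero element and any representation of a non-square automatically has $x, y \neq 0$.
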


\begin{proof}
	We know from~\cite{CMTQ18} that $\Orb_{\Ort_n(\F_q)}(C_+)$ is generated by the code $C_+$ with generator matrix $G_+ := (I_k \mid 0)$ and $\Orb_{\Ort_n(\F_q)}(C_-)$ is generated by the code $C_-$ with generator matrix 
	\begin{align*}
		G_- := \begin{bmatrix}
			I_{k-1} &   & 0 &   \\
			0	   & a & b & 0
		\end{bmatrix}
	\end{align*}
	for some $a,b\in \F_q$ with $a^2 + b^2$ being a non-square\footnote{These always exist, see~\cite{CMTQ18}.}.
	One can easily see that $C_+$ and $C_-$ are neighbors and we have---as in the binary case---$\dim(C_+ \cdot M \cap C_- \cdot M)=k-1$ for any $M\in \Ort_n(\F_q)$.
\end{proof}

Note that the proof above also shows how to compute the neighbor of any element $C_+ M \in \Orb_{\Ort_n(\F_q)}(C_+)$ on the other orbit as $C_- M \in \Orb_{\Ort_n(\F_q)}(C_-)$.

In the remainder of the section, we will calculate the regularity and biregularity degrees for the odd $q$ subgraphs.
Theorem~\ref{thm:LCD-equivalence} tells us that $C$ is LCD if and only if $\Sign(C) = \pm 1$.
We follow the notation of~\cite{CMTQ18} and define $\LCDpq{n}{k}$ and $\LCDnq{n}{k}$ to be the sets of ${[n, k]}_q$ LCD codes, respectively, with sign $+1$ and $-1$. 
Additionally, for $z \in \N$, let
\begin{align*}
	\Tauq{z} := \Leg{{(-1)}^{\left\lceil \frac{z - 1}{2} \right\rceil}}{q}.
\end{align*}

The following corollary follows directly from Lemma~\ref{lem:solutions-quadratic-form}.

\begin{corollary}\label{cor:signed-vectors}
	Let $q$ be odd and $C \in \LCDq{n}{k}$.
	Let $\CurV_0(C)$, $\CurV_+(C)$, and $\CurV_-(C)$ respectively represent the sets of self-orthogonal vectors in $C$, and the number of vectors $v$ in $C$ with $\Leg{\Ang{v, v}}{q} = \pm 1$.
	Then the number of self-orthogonal vectors in $C$ is
	\begin{align*}
		\Abs{\CurV_0(C)} = 
		\begin{cases}
			q^{k - 1} + \Sign(C) \cdot \Tauq{k} \cdot (q - 1) q^{\frac{k - 2}{2}}
			&\text{if $k$ is even,} \\
			q^{k - 1}
			&\text{if $k$ is odd,}
		\end{cases}
	\end{align*}
	and the number of vectors $v \in C$ with $\Leg{\Ang{v, v}}{q} = t \neq 0$ is
	\begin{align*}
		\Abs{\CurV_t(C)} = 
		\begin{cases}
			\frac{q - 1}{2} \cdot
			\left( q^{k - 1} - \Sign(C) \cdot \Tauq{k} \cdot q^{\frac{k - 2}{2}} \right)
			&\text{if $k$ even,} \\
			\frac{q - 1}{2} \cdot
			\left( q^{k - 1} + t \cdot \Sign(C) \cdot \Tauq{k} \cdot q^{\frac{k - 1}{2}} \Tauq{k} \right)
			&\text{if $k$ odd.}
		\end{cases}
	\end{align*}
\end{corollary}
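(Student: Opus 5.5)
The plan is to apply Lemma~\ref{lem:solutions-quadratic-form} directly to the quadratic form $Q := G G^\top$, where $G$ is a generator matrix of $C$. Every $v \in C$ can be written uniquely as $v = xG$ with $x \in \F_q^k$, and then $\Ang{v,v} = x Q x^\top$. Counting vectors $v \in C$ with $\Ang{v,v} = b$ is therefore the same as counting solutions of $Q(x) = b$ in $\F_q^k$. Because $C$ is LCD, Theorem~\ref{thm:LCD-equivalence} says $Q$ is nonsingular, so $Q$ is a nondegenerate quadratic form in $k$ indeterminates. By definition of the sign, $\Leg{\det(Q)}{q} = \Sign(C)$. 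I will use multiplicativity of the Legendre symbol to split each character appearing in Lemma~\ref{lem:solutions-quadratic-form} into a power of $\Leg{-1}{q}$, $\Sign(C)$, and (when $k$ is odd) $\Leg{b}{q}$. Finally I identify the power of $\Leg{-1}{q}$ with $\Tauq{k}$: $\lceil \frac{k-1}{2}\rceil$ equals $\frac{k}{2}$ for $k$ even and $\frac{k-1}{2}$ for $k$ odd, which are exactly the exponents appearing in the lemma.

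For $k$ even, part (a) of Lemma~\ref{lem:solutions-quadratic-form} with $b = 0$ gives $\Abs{\CurV_0(C)} = q^{k-1} + (q-1) q^{\frac{k-2}{2}} \Sign(C)\Tauq{k}$ directly; this count includes the zero vector. For $b \neq 0$ the lemma gives $q^{k-1} - q^{\frac{k-2}{2}}\Sign(C)\Tauq{k}$ solutions, independently of $b$. I then sum over the $\frac{q-1}{2}$ elements $b \in \F_q^\times$ with $\Leg{b}{q} = t$, which yields the stated formula for $\Abs{\CurV_t(C)}$.

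For $k$ odd, part (b) with $b = 0$ gives exactly $q^{k-1}$, since the Legendre symbol of $0$ vanishes. For $b \neq 0$ with $\Leg{b}{q} = t$, the lemma gives
\begin{align*}
q^{k-1} + q^{\frac{k-1}{2}} \Leg{{(-1)}^{\frac{k-1}{2}}}{q} \Leg{b}{q} \Leg{\det(Q)}{q} = q^{k-1} + t \cdot \Sign(C)\cdot \Tauq{k} \cdot q^{\frac{k-1}{2}}.
\end{align*}
This is again independent of the particular $b$ within its square class. Multiplying by the $\frac{q-1}{2}$ such $b$ gives $\Abs{\CurV_t(C)}$.

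There is no real obstacle; the only care needed is bookkeeping. I must check that the exponents of $\Leg{-1}{q}$ match the definition of $\Tauq{k}$ in both parities. I must also confirm that the count is independent of the choice of generator matrix; this holds because $\Sign(C)$ is well defined, as shown earlier. For $k$ odd, the derivation produces a single factor $t\cdot\Sign(C)\cdot\Tauq{k}$ multiplying $q^{\frac{k-1}{2}}$. I therefore expect the displayed formula to be read with one factor $\Tauq{k}$: the extra $\Tauq{k}$ shown in the statement would otherwise multiply to $\Tauq{k}^2 = 1$ and drop out, and I would flag this and state the formula in the form the lemma actually gives. As a consistency check, $\Abs{\CurV_0(C)} + \Abs{\CurV_+(C)} + \Abs{\CurV_-(C)} = q^k$ in both parities.
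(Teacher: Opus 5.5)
Your proposal is correct and is the paper's own argument: the paper obtains this corollary directly from Lemma~\ref{lem:solutions-quadratic-form} applied to the nondegenerate form $GG^\top$. It uses $\Leg{\det(GG^\top)}{q} = \Sign(C)$ and identifies the power of $\Leg{-1}{q}$ with $\Tauq{k}$, just as you do. Your flag on the odd-$k$ case is also right: the duplicated $\Tauq{k}$ is a typo, and your single-factor formula $\frac{q-1}{2}\left(q^{k-1} + t\cdot\Sign(C)\cdot\Tauq{k}\cdot q^{\frac{k-1}{2}}\right)$ is the one the paper actually uses later in Theorem~\ref{thm:subcode-count}, where one takes $t = \Sign(C)$ and divides by $q-1$.
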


In order to count the number of subcodes of an LCD code with a given sign, we need the following two lemmata.

\begin{lemma}\label{lem:subcode-construct}
	Let $q$ be odd, $C \in \LCDq{n}{k}$, and $S \subsetneq C$ be a $(k - 1)$-dimensional subcode of $C$.
	Then $S = {(C^\bot \oplus \Ang{v})}^\bot$ for some $v \in C$.
	Furthermore, distinct subspaces $\Ang{v}$ determine unique subcodes.
\end{lemma}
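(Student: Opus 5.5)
The plan is to take orthogonal complements inside the decomposition $\F_q^n = C \oplus C^\perp$, which holds because $C$ is LCD. Given a $(k-1)$-dimensional subcode $S \subsetneq C$, its dual $S^\perp$ has dimension $n-k+1$ and contains $C^\perp$, since $S \subseteq C$. So $S^\perp = C^\perp \oplus W$ for some one-dimensional $W$.

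The first step is to choose $W$ inside $C$. Take any $u \in S^\perp \setminus C^\perp$ and write $u = u|_C + u|_{C^\perp}$. Since $u|_{C^\perp} \in C^\perp \subseteq S^\perp$, we get $v := u|_C = u - u|_{C^\perp} \in S^\perp$. Moreover $v \neq 0$, because $u \notin C^\perp$. Hence $S^\perp = C^\perp \oplus \Ang{v}$ with $v \in C$; the sum is direct because $C \cap C^\perp = \{0\}$. Dualizing gives $S = {(C^\perp \oplus \Ang{v})}^\perp$, which is the first claim.

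For uniqueness, I would identify $S^\perp \cap C$ exactly. Any element of $S^\perp = C^\perp \oplus \Ang{v}$ has the form $d + \lambda v$ with $d \in C^\perp$. If it also lies in $C$, then $d = (d+\lambda v) - \lambda v \in C \cap C^\perp = \{0\}$. Therefore $S^\perp \cap C = \Ang{v}$, so the line $\Ang{v}$ is determined by $S$, namely as $S^\perp \cap C$.

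Conversely, for any nonzero $v \in C$, the space $C^\perp \oplus \Ang{v}$ has dimension $n-k+1$. Its dual is therefore a $(k-1)$-dimensional space, and it is contained in $(C^\perp)^\perp = C$. It follows that $\Ang{v} \mapsto {(C^\perp \oplus \Ang{v})}^\perp$ is a bijection between the one-dimensional subspaces of $C$ and the $(k-1)$-dimensional subcodes of $C$. In particular, distinct lines $\Ang{v}$ give distinct subcodes. As a consistency check, both sets have $\frac{q^k-1}{q-1}$ elements.

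The argument uses only the LCD decomposition, so the oddness of $q$ plays no role here. The only delicate point is the projection step, which places the extra generator in $C$ rather than merely in $\F_q^n$. I would emphasize that this step needs $C^\perp \subseteq S^\perp$, so that removing the $C^\perp$-component keeps the vector inside $S^\perp$.
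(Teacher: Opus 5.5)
Your proof is correct and takes essentially the same route as the paper. The paper augments a parity-check matrix $H$ of $C$ by a vector, projects that vector onto $C$ using $\F_q^n = C \oplus C^\bot$, and gets injectivity by observing that $C^\bot \oplus \Ang{u} = C^\bot \oplus \Ang{v}$ with $\Ang{u} \neq \Ang{v}$ would force a nonzero vector into $C \cap C^\bot$; your identification $S^\perp \cap C = \Ang{v}$ is the same argument in subspace language, and your converse giving a full bijection is a harmless addition that the paper leaves implicit.
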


\begin{proof}
	Let $C$ have parity-check matrix $H$.
	Then $S$ can be determined by parity-check matrix $\begin{bmatrix} H \\ v \end{bmatrix}$ for some $v \in \F_q^n \setminus C^\bot$.
	Since $C$ is LCD, we can write $v = v_0 + v_\bot$ for $v_0 \in C$ and $v_\bot \in C^\bot$.
	Then $\begin{bmatrix} H \\ v_0 \end{bmatrix}$ is also a parity-check matrix for $S$ and therefore $S = {(C^\bot \oplus \Ang{v_0})}^\bot$.
	
		Finally, assume, there exist $u, v \in C$ such that $\Ang{u} \neq \Ang{v}$ but that $C^\bot \oplus \Ang{u} = C^\bot \oplus \Ang{v}$.
		Then $u = w + \alpha v$ for some nonzero $w \in C^\bot$ and some $\alpha \in \F_q$.
		But then $w \in (C \cap C^\bot)$ which is a contradiction since $C$ is an LCD code.
\end{proof}

\begin{lemma}\label{lem:augment-sign}
	Let $q$ be odd, $C \in \LCDq{n}{k}$, and $v \in C^\bot$.
	Then $\Sign(C \oplus \Ang{v}) = \Sign(C) \cdot \Leg{\Ang{v, v}}{q}$.
\end{lemma}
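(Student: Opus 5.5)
We prove this with a block-determinant computation, exactly as in Lemma~\ref{lem:augmented-binary}, now keeping track of the actual value of the determinant instead of only whether it vanishes. Let $G$ be a generator matrix of $C$. The statement implicitly takes $v \neq 0$ (otherwise $C \oplus \Ang{v}$ is not a direct sum). In that case $v \notin C$, since $v \in C^\perp$ and $C \cap C^\perp = \{0\}$. Hence
\begin{align*}
	G' := \begin{bmatrix} G \\ v \end{bmatrix}
\end{align*}
is a generator matrix of the $(k+1)$-dimensional code $C \oplus \Ang{v}$. The sign $\Sign(C \oplus \Ang{v})$ is $\Leg{\det(G' G'^\top)}{q}$, and it is well defined by the independence of the choice of generator matrix shown earlier.

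Next we compute $G' G'^\top$ blockwise:
\begin{align*}
	G' G'^\top = \begin{bmatrix} G G^\top & G v^\top \\ v G^\top & v v^\top \end{bmatrix}.
\end{align*}
Every row of $G$ lies in $C$ and $v \in C^\perp$, so $G v^\top = 0$ and $v G^\top = 0$. The matrix is therefore block diagonal, and
\begin{align*}
	\det(G' G'^\top) = \det(G G^\top) \cdot \Ang{v, v}.
\end{align*}

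Finally, the Legendre symbol is multiplicative on $\F_q$, including at $0$, where both sides vanish. This gives
\begin{align*}
	\Sign(C \oplus \Ang{v}) = \Leg{\det(G G^\top)}{q}\Leg{\Ang{v,v}}{q} = \Sign(C)\cdot\Leg{\Ang{v,v}}{q}.
\end{align*}
The case $\Ang{v,v} = 0$ needs no separate treatment: the formula returns sign $0$, consistent with Theorem~\ref{thm:LCD-equivalence}, since the augmented code is then not LCD.

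There is no real obstacle here. The only points needing care are that $v \neq 0$, so that $G'$ has full rank, and that the off-diagonal blocks vanish precisely because $v \in C^\perp$.
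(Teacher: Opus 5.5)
Your proof is correct and follows the paper's argument: the same block-diagonal computation $\det(G'G'^\top) = \det(GG^\top)\cdot\Ang{v,v}$, which holds because $Gv^\top = 0$. The only difference is that you fold the case $\Ang{v,v}=0$ into this formula via multiplicativity of the Legendre symbol at $0$, whereas the paper handles that case separately by noting that $v$ lies in $(C\oplus\Ang{v})\cap(C\oplus\Ang{v})^\perp$; your explicit remark that $v\neq 0$ is required is a worthwhile addition.
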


\begin{proof}
	We consider two cases.
	\begin{itemize}
		\item Case $\Ang{v, v} = 0$: then, since $v \in C \oplus \Ang{v}$ and $v \in C^\bot$, $v \in (C \oplus \Ang{v}) \cap {(C \oplus \Ang{v})}^\bot$ so $\Sign(C \oplus \Ang{v}) = 0$.
		\item Case $\Leg{\Ang{v, v}}{q} = \pm 1$: let $G$ be a generator matrix for $C$.
			Then $vG^\top=0$ and
			\begin{align*}
				\Sign(C \oplus \Ang{v})
				&= \Leg{\det \left( \begin{bmatrix} G \\ v \end{bmatrix} {\begin{bmatrix} G \\ v \end{bmatrix}}^\top \right)}{q}
				= \Leg{\det \left( \begin{bmatrix} G G^\top & 0 \\ 0 & \Ang{v, v} \end{bmatrix} \right)}{q} \\
				&= \Leg{\det \left( G G^\top \right)}{q} \cdot \Leg{\Ang{v, v}}{q}
				= \Sign(C) \cdot \Leg{\Ang{v, v}}{q}.
				\qedhere{}
			\end{align*}
	\end{itemize}
\end{proof}

\begin{theorem}\label{thm:subcode-count}
	Let $q$ be odd and $C \in \LCDq{n}{k}$.
	For $t \in \{-1, 0, 1\}$, define $\CurS_t(C) = \{S \subsetneq C | \dim(S) = k - 1, \Sign(S) = t\}$ to be the set of $(k - 1)$-dimensional subcodes of $C$ of a given sign.
	Then
	\begin{align*}
		\Abs{\CurS_0(C)} &= \begin{cases}
			\frac{q^{k - 1} - 1}{q - 1}
			+ \Sign(C) \cdot \Tauq{k} \cdot q^{\frac{k - 2}{2}}
			&\text{for $k$ even,} \\
			\frac{q^{k - 1} - 1}{q - 1}
			&\text{for $k$ odd;}
		\end{cases} \\
		\Abs{\CurS_+(C)} &= \begin{cases}
			\tfrac{1}{2} q^{k - 1}
			- \tfrac{1}{2} \cdot \Sign(C) \cdot \Tauq{k} \cdot q^{\frac{k - 2}{2}}
			&\text{for $k$ even,} \\
			\tfrac{1}{2} q^{k - 1}
			+ \tfrac{1}{2} \cdot \Tauq{k} \cdot q^{\frac{k - 1}{2}}
			&\text{for $k$ odd;}
		\end{cases} \\
		\Abs{\CurS_-(C)} &= \begin{cases}
			\tfrac{1}{2} q^{k - 1}
			- \tfrac{1}{2} \cdot \Sign(C) \cdot \Tauq{k} \cdot q^{\frac{k - 2}{2}}
			&\text{for $k$ even,} \\
			\tfrac{1}{2} q^{k - 1}
			- \tfrac{1}{2} \cdot \Tauq{k} \cdot q^{\frac{k - 1}{2}}
			&\text{for $k$ odd.}
		\end{cases}
	\end{align*}
\end{theorem}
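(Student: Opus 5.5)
The plan is to reduce the count of subcodes to a count of projective points of $C$ according to the quadratic character of $\Ang{v,v}$, and then read off the numbers from Corollary~\ref{cor:signed-vectors}.

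\textbf{Step 1 (parametrize subcodes).} By Lemma~\ref{lem:subcode-construct}, the $(k-1)$-dimensional subcodes $S$ of $C$ correspond bijectively to the projective points $\Ang{v}$ with $v \in C\setminus\{0\}$, via $S = {(C^\perp \oplus \Ang{v})}^\perp$. Hence the $\frac{q^k-1}{q-1}$ subcodes are indexed by $\Proj(C)$, and it suffices to know $\Sign(S)$ as a function of $v$.

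\textbf{Step 2 (sign of a subcode).} Since $C$ is LCD, so is $C^\perp$ (Theorem~\ref{thm:LCD-equivalence}). Moreover $v \in C = {(C^\perp)}^\perp$, so Lemma~\ref{lem:augment-sign}, applied to the LCD code $C^\perp$ and the vector $v$, gives
\begin{align*}
\Sign(C^\perp \oplus \Ang{v}) = \Sign(C^\perp)\Leg{\Ang{v,v}}{q} = \Sign(C)\Leg{\Ang{v,v}}{q},
\end{align*}
where the last equality uses Proposition~\ref{prop:dual-sign}.

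Next I transfer this to $S$ itself. By Theorem~\ref{thm:LCD-equivalence}, $S$ is LCD if and only if $S^\perp = C^\perp\oplus\Ang{v}$ is LCD. When both are LCD, Proposition~\ref{prop:dual-sign} gives $\Sign(S) = \Sign(S^\perp)$. When neither is LCD, both signs are $0$. In all cases
\begin{align*}
\Sign(S) = \Sign(C)\Leg{\Ang{v,v}}{q}.
\end{align*}

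\textbf{Step 3 (count).} Because $\Sign(C)=\pm1$ and the Legendre symbol is constant on projective classes, Step~2 gives
\begin{align*}
\Abs{\CurS_0(C)} = \frac{\Abs{\CurV_0(C)}-1}{q-1}, \qquad \Abs{\CurS_t(C)} = \frac{\Abs{\CurV_{t\Sign(C)}(C)}}{q-1} \quad (t=\pm1).
\end{align*}
I then substitute the values from Corollary~\ref{cor:signed-vectors}.

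For $k$ even, $\Abs{\CurV_{\pm}(C)}$ does not depend on the sign index. This directly yields the stated $\tfrac12 q^{k-1} - \tfrac12\Sign(C)\Tauq{k}q^{\frac{k-2}{2}}$ for both $\CurS_\pm(C)$, and $\frac{q^{k-1}-1}{q-1}+\Sign(C)\Tauq{k}q^{\frac{k-2}{2}}$ for $\CurS_0(C)$.

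For $k$ odd, $\CurS_0(C)$ is immediate from $\Abs{\CurV_0(C)} = q^{k-1}$. For $\CurS_t(C)$ with $s = t\Sign(C)$, the count is $\frac12\bigl(q^{k-1} + s\,\Sign(C)\Tauq{k} q^{\frac{k-1}{2}}\bigr)$. Since $\Sign(C)^2=1$, this equals $\frac12 q^{k-1} + \frac t2 \Tauq{k} q^{\frac{k-1}{2}}$, which is independent of $\Sign(C)$, as the statement claims.

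\textbf{Main obstacle.} The delicate point is verifying the $k$ odd entry of Corollary~\ref{cor:signed-vectors}, since its displayed formula contains a stray extra factor $\Tauq{k}$. I would rederive it directly from Lemma~\ref{lem:solutions-quadratic-form}(b) with $\det(Q) = \det(GG^\top)$. Summing the count $q^{k-1} + q^{\frac{k-1}{2}}\Leg{{(-1)}^{\frac{k-1}{2}} b \det Q}{q}$ over the $\frac{q-1}{2}$ values $b$ with $\Leg{b}{q}=s$ gives $\frac{q-1}{2}\bigl(q^{k-1} + s\,\Sign(C)\,\Tauq{k}\,q^{\frac{k-1}{2}}\bigr)$. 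This uses $\lceil\frac{k-1}{2}\rceil = \frac{k-1}{2}$ for $k$ odd, so that $\Tauq{k} = \Leg{{(-1)}^{\frac{k-1}{2}}}{q}$.

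For $k$ even I would similarly check $\Tauq{k} = \Leg{-1}{q}^{k/2}$ against Lemma~\ref{lem:self-orthogonal}(b).

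As a consistency check, the three counts should sum to $\frac{q^k-1}{q-1}$ in both parities.
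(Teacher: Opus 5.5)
Your proposal is correct and follows the paper's own proof: you index the hyperplanes of $C$ by projective points $\Ang{v}$ via Lemma~\ref{lem:subcode-construct}, obtain $\Sign(S)=\Sign(C)\Leg{\Ang{v,v}}{q}$ from Lemma~\ref{lem:augment-sign} and Proposition~\ref{prop:dual-sign}, and then count using Corollary~\ref{cor:signed-vectors}. Two details in your write-up check out and are left implicit in the paper: you rederive the odd-$k$ vector count from Lemma~\ref{lem:solutions-quadratic-form}, which correctly drops the stray $\Tauq{k}$ factor in the corollary, and you treat the non-LCD case explicitly when transferring the sign from $S^\perp$ to $S$.
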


\begin{proof}
	We construct the sets of subcodes by choosing vectors as in Lemma~\ref{lem:subcode-construct}.
	We then determine the sign of the subcode from the vector with which we augment the dual using Proposition~\ref{prop:dual-sign} and Lemma~\ref{lem:augment-sign}.
	Then, for calculating the number of subcodes of given sign $t$, we count the number of 1-dimensional subspaces generated by counting the vectors which generate them as given by Corollary~\ref{cor:signed-vectors}, i.e., by taking the number of vectors, removing the zero vector in the self-orthogonal case, and dividing by $(q - 1)$.
\end{proof}

We now consider the ways in which we can augment these subcodes in order to build neighboring LCD codes.

\begin{lemma}\label{lem:non-lcd-subcode-augmentation}
	Let $q$ be odd and $C$ be an ${[n, k]}_q$ code with $C \cap C^\bot = \Ang{u}$ for some vector $u$.
	Then $C \oplus \Ang{v}$ is an LCD code if and only if $\Ang{u, v} \neq 0$.
	Furthermore, if it is an LCD code, then $\Sign(C \oplus \Ang{v})$ is independent of the choice of $v$.
\end{lemma}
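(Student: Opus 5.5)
The idea is to split off the one-dimensional hull $\Ang{u} = C \cap C^\bot$ from $C$. Then the Gram matrix of $C \oplus \Ang{v}$ becomes a bordered matrix whose determinant factors explicitly. Let $m = \dim C$.

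\textbf{Step 1: choose a complement.} Pick a complement $D$ of $\Ang{u}$ in $C$, so that $C = D \oplus \Ang{u}$ with $\dim D = m-1$. We claim $D$ is LCD. Let $x \in D \cap D^\bot$. Then $x$ is orthogonal to $D$, and also to $u$ because $u \in C^\bot$. Hence $x \in C \cap C^\bot = \Ang{u}$, and so $x \in D \cap \Ang{u} = \{0\}$.

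Let $G_D$ be a generator matrix of $D$ and set $A := G_D G_D^\top$. By Theorem~\ref{thm:LCD-equivalence}, $A$ is nonsingular.

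\textbf{Step 2: compute the Gram determinant.} Use the generator matrix of $C \oplus \Ang{v}$ with rows $G_D$, then $u$, then $v$. Write $a := G_D v^\top$ and $c := \Ang{u,v}$. Since $u \in C^\bot$, we have $G_D u^\top = 0$ and $\Ang{u,u} = 0$. The Gram matrix is therefore
\begin{align*}
M = \begin{bmatrix} A & 0 & a \\ 0 & 0 & c \\ a^\top & c & \Ang{v,v} \end{bmatrix}.
\end{align*}
The row indexed by $u$ has a single possibly nonzero entry $c$, in the last column. Likewise, the column indexed by $u$ has the single entry $c$, in the last row. Expanding the determinant along that row and then along that column gives
\begin{align*}
\det(M) = -c^2 \det(A).
\end{align*}
The sign in front is fixed by the permutation involved, as one sees in the $2\times 2$ case $\det\begin{bmatrix}0&c\\c&x\end{bmatrix} = -c^2$. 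In particular, $\det(M)$ does not depend on $a$ or on $\Ang{v,v}$.

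\textbf{Step 3: the equivalence.} Suppose $c \neq 0$. Then $v \notin C$, since $u$ is orthogonal to all of $C$. So $C \oplus \Ang{v}$ really has dimension $m+1$, and $\det(M) \neq 0$. By Theorem~\ref{thm:LCD-equivalence}, $C \oplus \Ang{v}$ is LCD.

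Conversely, suppose $c = 0$. If $v \in C$, the sum is not a direct sum of dimension $m+1$. If $v \notin C$, then $u$ lies in $C \oplus \Ang{v}$ and is orthogonal to every generator, so $u$ lies in the hull and the code is not LCD. Equivalently, $\det(M) = 0$.

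\textbf{Step 4: the sign.} When the code is LCD, the formula from Step 2 gives
\begin{align*}
\Sign(C \oplus \Ang{v}) = \Leg{-c^2 \det(A)}{q} = \Leg{-1}{q}\Leg{\det(A)}{q} = \Leg{-1}{q}\Sign(D),
\end{align*}
since $c^2$ is a nonzero square. This depends only on $D$, which we fixed once, and not on $v$.

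The only delicate point is the determinant expansion in Step 2. One must get the sign $-c^2$ right and check that the entries $a$ and $\Ang{v,v}$ drop out completely. It is this that makes the sign independent of $v$.
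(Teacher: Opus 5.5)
Your proof is correct and follows the paper's argument: split off a complement $D$ of $\Ang{u}$ in $C$, show that $D$ is LCD, and evaluate the bordered Gram determinant for the generator matrix with rows $G_D$, $u$, $v$. Your value $-c^2\det(A)$ is the correct one (the paper's displayed formula drops the factor $-1$, which is harmless because $\Leg{-1}{q}$ does not depend on $v$), and fixing $D$ once for all $v$ makes the paper's extra check, that the sign does not depend on the choice of complement, unnecessary.
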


\begin{proof}
	First note that it follows necessarily that $\Ang{u, u} = 0$ and that for any $v \in \F_q^n$, $\Ang{u, v} \neq 0$ implies that $v \notin C \cup C^\bot$.
	Consider the generator matrices for $C$ and $C \oplus \Ang{v}$ of the following form:
	\begin{align*}
		G = \begin{bmatrix} G' \\ u \end{bmatrix},
		\qquad
		\hat{G} = \begin{bmatrix} G' \\ u \\ v \end{bmatrix}.
	\end{align*}
	Then
	\begin{align*}
		\det(\hat{G} \hat{G}^\top)
		&= \det \left( \begin{bmatrix}
			G' G'^\top & u G'^\top & v G'^\top \\
			G' u^\top  & u u^\top  & v u^\top  \\
			G' v^\top  & u v^\top  & v v^\top
		\end{bmatrix} \right)
		= \det \left( \begin{bmatrix}
			G' G'^\top & 0		  & v G'^\top  \\
			0		  & 0		  & \Ang{u, v} \\
			G' v^\top  & \Ang{u, v} & \Ang{v, v}
		\end{bmatrix} \right) \\
		&= {\Ang{u, v}}^2 \det(G' G'^\top).
	\end{align*}
	Consider then the code $C'$ generated by $G'$ and assume it is not an LCD code.
	Then there exists some $u' \in C' \subsetneq C$ such that $u' G'^\top = 0$.
	Since $u \in C^\bot$, we have $\Ang{u, u'} = 0$ and so $u' \in C^\bot$ as well.
	This is a contradiction since $\dim(C \cap C^\bot) = 1$ and $u' \notin \Ang{u}$.
	So $C'$ must be an LCD code and $\det(G' G'^\bot) \neq 0$.
	Thus $C \oplus \Ang{v}$ must also be an LCD code if and only if $\Ang{u, v} \neq 0$.

	The question remains if the choice of basis $G'$ changes the sign of $C'$ and therefore also $C \oplus \Ang{v}$.
	Given two such bases $G_1'$ and $G_2'$, we know that for some invertible $(k - 1) \times (k - 1)$ matrix $M$ and a matrix $U = \begin{bmatrix} \alpha_1 u \\ \vdots \\ \alpha_{k - 1} u \end{bmatrix}$ for scalars $\alpha_1, \ldots, \alpha_{k - 1} \in \F_q$, we have $G_2' = M G_1' + U$.
	Then
	\begin{align*}
		\det(G_2' G_2'^\top)
		&= \det((M G_1' + U){(M G_1' + U)}^\top) \\
		&= \det(M G_1' G_1'^\top M^\top + U G_1'^\top M^\top + M G_1' U^\top + U U^\top) \\
		&= \det(M G_1' G_1'^\top M^\top)
		 = {\det(M)}^2 \det(G_1' G_1'^\top).
	\end{align*}
	Therefore
	\begin{align*}
		\Leg{\det(G_2' G_2'^\top)}{q}
		= \Leg{{\det(M)}^2}{q} \Leg{\det(G_1' G_1'^\top)}{q}
		= \Leg{\det(G_1' G_1'^\top)}{q}.
	\end{align*}
\end{proof}

We finally need one more lemma showing that the dimension of the intersection of an LCD code's subcode with its dual is at most 1.

\begin{lemma}\label{lem:subcode-hulldim}
	Let $q$ be odd, $C \in \LCDq{n}{k}$, and $v \in C$ be such that $\Ang{v, v} = 0$.
	Define $S = {(C^\bot \oplus \Ang{v})}^\bot$.
	Then $\dim(S \cap S^\bot) = 1$.
\end{lemma}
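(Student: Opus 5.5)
We will compute $S \cap S^\perp$ directly. Since $S = {(C^\perp \oplus \Ang{v})}^\perp$, we have $S^\perp = C^\perp \oplus \Ang{v}$, because $\perp$ is an involution on subspaces of $\F_q^n$. This sum is direct: $v \in C$ and $C \cap C^\perp = \{0\}$, and $v \neq 0$ is implicit, since otherwise $S$ would not be $(k-1)$-dimensional. The plan is to show that $S \cap S^\perp = \Ang{v}$. This gives dimension exactly $1$.

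First, $v \in S \cap S^\perp$. Clearly $v \in S^\perp$. For $v \in S$ we need $v$ orthogonal to all of $C^\perp \oplus \Ang{v}$. It is orthogonal to $C^\perp$ because $v \in C$, and to itself because $\Ang{v, v} = 0$ by hypothesis. Hence $\Ang{v} \subseteq S \cap S^\perp$, so the dimension is at least $1$.

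For the reverse inclusion, take $x \in S \cap S^\perp$. Since $x \in S^\perp$, we can write $x = d + \alpha v$ with $d \in C^\perp$ and $\alpha \in \F_q$. Also $S \subseteq C$, because $S^\perp \supseteq C^\perp$. So $x \in C$, and then $d = x - \alpha v \in C \cap C^\perp = \{0\}$. Thus $x = \alpha v \in \Ang{v}$, which gives $S \cap S^\perp = \Ang{v}$.

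There is no real obstacle here. The only points needing care are the identity $S^\perp = C^\perp \oplus \Ang{v}$ (double orthogonal complement) and the containment $S \subseteq C$, both of which are standard linear algebra.
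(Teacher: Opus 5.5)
Your proof is correct and uses essentially the same argument as the paper. You show $v \in S \cap S^\perp$ from $v \in C$ and $\langle v, v\rangle = 0$, then use $S \subseteq C$ and $C \cap C^\perp = \{0\}$ to force any $x = d + \alpha v \in S \cap S^\perp$ (with $d \in C^\perp$) to satisfy $d = 0$. The paper phrases that second step as a contradiction rather than directly, and, like you, it tacitly needs $v \neq 0$.
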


\begin{proof}
	For all $u \in C^\bot \oplus \Ang{v}$, $\Ang{u, v} = 0$ so $v \in S$.
	Assume there exists some $w \in S \setminus \Ang{v}$ which is also contained in $S^\bot$.
	Then $w \in C$ (since $S \subsetneq C$) and $w = w' + \alpha v$ for some $w' \in C^\bot$ and some $\alpha \in \F_q$.
	$v, w \in C$ implies that $w' = w - \alpha v \in (C \cap C^\bot)$ which is a contradiction.
\end{proof}

We can now state the main theorem of this section, determining the regularity degrees of the LCD neighbor graph for odd $q$.

\begin{theorem}\label{thm:odd-q-neighbor-counts}
	Let $C \in \LCDq{n}{k}$ with $r = \Sign(C)$ and let
	\begin{align*}
		\CurE_0 &= \frac{(q^{n - k} - 1)(q^k - 1)}{q - 1}; \\
		\CurE_r &= \begin{cases}
			\CurE_0 - q^{n - 1} + r \cdot \Tauq{k} \cdot q^{n - \tfrac{k}{2} - 1}
			&\text{if $k$ is even,} \\
			\CurE_0 - q^{n - 1}
			&\text{if $k$ is odd;}
		\end{cases} \\
		\CurN_{r, s} &= \begin{cases}
			\tfrac{1}{2} \cdot q^{\frac{n - 2}{2}} \left(
				\left( q^{\frac{k}{2}} - r \cdot \Tauq{k} \right)
				\left( q^{\frac{n - k}{2}} + s \cdot \Tauq{n - k + 1} \right)
			\right)
			&\text{for $n$ even, $k$ even,} \\
			\tfrac{1}{2} \cdot q^{\frac{n - 2}{2}} \left(
				q^{\frac{n}{2}} - \Tauq{n}
			\right)
			&\text{for $n$ even, $k$ odd,} \\
			\tfrac{1}{2} \cdot q^{n - \tfrac{k}{2} - 1} \left(
				q^{\frac{k}{2}} - r \cdot \Tauq{k}
			\right)
			&\text{for $n$ odd, $k$ even,} \\
			\tfrac{1}{2} \cdot q^{\frac{n + k - 2}{2}} \left(
				q^{\frac{n - k}{2}} + s \cdot \Tauq{n - k + 1}
			\right)
			&\text{for $n$ odd, $k$ odd.}
		\end{cases}
	\end{align*}
	Then $C$ has $\CurN_{r, r} + \CurE_r$ neighbors in $\mathrm{LCD}_r {[n, k]}_q$ and $\CurN_{r, -r}$ neighbors in $\mathrm{LCD}_{-r} {[n, k]}_q$, where $\mathrm{LCD}_t{[n, k]}_q$ is the subset of codes of $\mathrm{LCD}{[n, k]}_q$ whose sign is $t$.
\end{theorem}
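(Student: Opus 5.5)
We organize the count by the $(k-1)$-dimensional subcodes $S$ of $C$, as in the proof of Theorem~\ref{thm:LCD-neighbors}. Every neighbor of $C$ has the form $S \oplus \Ang{w}$, where $S$ is one of the $\frac{q^k-1}{q-1}$ subcodes of $C$ and $\Ang{w}$ is a projective point of $\F_q^n/S$ not leading back to $C$. By Lemma~\ref{lem:subcode-construct}, $S = {(C^\bot \oplus \Ang{v})}^\bot$ for a unique projective point $\Ang{v}\subseteq C$. Proposition~\ref{prop:dual-sign} and Lemma~\ref{lem:augment-sign} then give $\Sign(S)=\Sign(C)\Leg{\Ang{v,v}}{q}$. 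Theorem~\ref{thm:subcode-count} records how many subcodes of each sign $t\in\{0,\pm1\}$ there are.

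For each subcode we count the LCD neighbors and record their sign.

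\textbf{Case $\Sign(S)=t\neq 0$.} Here $S$ is LCD and the quotient is represented by $S^\perp$, an LCD code of dimension $n-k+1$ and sign $t$. Writing $S^\perp = C^\perp\oplus\Ang{v}$ with $v\perp C^\perp$, a vector $x\in S^\perp$ gives $S\oplus\Ang{x}$ of sign $t\Leg{\Ang{x,x}}{q}$ by Lemma~\ref{lem:augment-sign}. We then count the points of $S^\perp$ with $\Leg{\Ang{x,x}}{q}=\pm1$ using Corollary~\ref{cor:signed-vectors}, applied in dimension $n-k+1$ with sign $t$. The point $\Ang{v}$ itself returns $C$ and must be removed from the class whose sign equals $r$.

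\textbf{Case $\Sign(S)=0$.} Lemma~\ref{lem:subcode-hulldim} gives $S\cap S^\perp=\Ang{v}$. By Lemma~\ref{lem:non-lcd-subcode-augmentation}, $S\oplus\Ang{w}$ is LCD exactly when $\Ang{v,w}\neq0$, and then its sign is constant. Since $C$ itself is such an augmentation, that constant sign is $r$. So each self-orthogonal subcode contributes $q^{n-k}-1$ neighbors, all of sign $r$, as counted in Theorem~\ref{thm:LCD-neighbors}.

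Next we sum over subcodes. The sign-$0$ subcodes contribute $\Abs{\CurS_0(C)}(q^{n-k}-1)$ neighbors of sign $r$. The LCD subcodes $S$ of sign $t$ contribute $\frac{1}{q-1}\Abs{\CurV_{t s}(S^\perp)}$ neighbors of sign $s$, minus one when $s=r$. Collecting these terms should give the stated formulas. The term $\CurE_0$, being the "generic" count $(q^{n-k}-1)$ per subcode, is shared, and the remaining terms should combine into $\CurE_r$ and $\CurN_{r,\pm r}$. A consistency check is that the total must match Theorem~\ref{thm:LCD-neighbors}.

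The main obstacle is the parity bookkeeping. Corollary~\ref{cor:signed-vectors} applied to $S^\perp$ depends on the parity of $n-k+1$ and on $\Tauq{n-k+1}$. Theorem~\ref{thm:subcode-count} depends on the parity of $k$ and on $\Tauq{k}$. Combining them in the four cases ($n,k$ even or odd) and simplifying the products of $\Tauq{\cdot}$ terms to the closed forms is where errors are likely, especially the case $n$ even and $k$ odd, where the dependence on $r$ and $s$ cancels. We would do the four cases separately and use the identities $\Tauq{a}\Tauq{b}$ expressed via $\Leg{-1}{q}$ powers to simplify. An alternative route for the cross-orbit count $\CurN_{r,-r}$ is double counting via orbit sizes (Theorem~\ref{thm:biregular}), as in Proposition~\ref{prop:oeoo-edges}, which would serve as a check.
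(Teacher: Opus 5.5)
Your proposal is correct and is essentially the paper's own proof. The paper also sorts neighbors by the sign of the $(k-1)$-dimensional subcode, sets $\CurN_{r,s}=\frac{1}{q-1}\sum_{t=\pm}\Abs{\CurS_t(C)}\,\Abs{\CurV_{ts}(S_t^\bot)}$ (with $S_t$ any subcode of sign $t$, so that $C$ is counted once per LCD subcode) and $\CurE_r=(q^{n-k}-1)\Abs{\CurS_0(C)}-\Abs{\CurS_+(C)}-\Abs{\CurS_-(C)}$, and likewise leaves the final simplification implicit.

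That simplification does go through: for $n-k$ even the per-subcode count $\frac{1}{q-1}\Abs{\CurV_{ts}(S^\bot)}$ is independent of $t$, and for $n-k$ odd it is independent of $s$. One caution for the odd-dimensional case ($m=n-k+1$ odd): for an LCD code $D$ of odd dimension $m$ and nonzero Legendre value $u$, use the count $\frac{q-1}{2}\bigl(q^{m-1}+u\,\Sign(D)\,\Tauq{m}\,q^{\frac{m-1}{2}}\bigr)$ taken from Lemma~\ref{lem:solutions-quadratic-form}. The odd case of Corollary~\ref{cor:signed-vectors} as printed carries a spurious second factor $\Tauq{k}$ and would not reproduce the stated formulas.
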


\begin{proof}
	Given $C$, by Theorem~\ref{thm:subcode-count}, we know both the number of unique LCD subcodes with a given sign and the number of non-LCD subcodes which by Lemma~\ref{lem:subcode-hulldim} must have a 1-dimensional intersection with its dual.
	We then count the number of appropriate vectors with which we can augment these subcodes to give LCD codes (which aren't $C$).
	In the case of the non-LCD subcodes, we know by Lemma~\ref{lem:non-lcd-subcode-augmentation} that augmentation by any appropriate choice of vector gives the same sign and since this includes the code $C$, any such derived LCD code must have sign $r$.
	We then set the values $\CurN_{r, +}$ and $\CurN_{r, -}$ to respectively be the number of LCD codes with sign $+$ and $-$ obtained from augmenting LCD subcodes of dimension $k - 1$ (note that these values count $C$ as well); i.e., 
	\begin{align*}
		\CurN_{r, +}
		= \frac{1}{q - 1} \left(
			\Abs{\CurS_+(C) \vphantom{\CurV_+(C_+^\bot)}} \Abs{\CurV_+(C_+^\bot)} +
			\Abs{\CurS_-(C) \vphantom{\CurV_-(C_-^\bot)}} \Abs{\CurV_-(C_-^\bot)}
		\right), \\
		\CurN_{r, -}
		= \frac{1}{q - 1} \left(
			\Abs{\CurS_+(C) \vphantom{\CurV_-(C_+^\bot)}} \Abs{\CurV_-(C_+^\bot)} +
			\Abs{\CurS_-(C) \vphantom{\CurV_+(C_-^\bot)}} \Abs{\CurV_+(C_-^\bot)}
		\right),
	\end{align*}
	where $C_+$ and $C_-$ are arbitrary $(k - 1)$-dimensional subcodes of $C$ with sign $+$ and $-$.

	Then, looking at the case where the subcode---call it $C_0$---is not LCD, we take a generator $\Ang{u} = C_0 \cap C_0^\bot$ and count all vectors $v$ not orthogonal to $u$ which augment $C_0$ to give a unique code (i.e., we only take one coset representative each of $C_0$).
	So we take all vectors in $\F_q^n$, subtract out those which are orthogonal to $u$, divide by $\Abs{C_0} = q^{k - 1}$, and divide by $(q - 1)$ since the same 1-dimensional subspace will yield the same code.
	This gives us
	\begin{align*}
		\frac{q^n - q^{n - 1}}{(q - 1) q^{k - 1}} = q^{n - k} - 1.
	\end{align*}
	Finally, we define $\CurE_0$ as above (which happens to be the number of LCD neighbors from the simpler case in Theorem~\ref{thm:LCD-neighbors}) and define
	\begin{align*}
		\CurE_r = (q^{n - k} - 1) \Abs{\CurS_0(C)} - \Abs{\CurS_+(C)} - \Abs{\CurS_-(C)},
	\end{align*}
	where the first summand counts the number of LCD neighbors of sign $r$ coming from non-LCD subcodes and the other two summands subtract off one for each LCD subcode so that we do not count the cases where the subcode is augmented to reobtain $C$.
	Thus, counting neighbors with the same sign, the number is $\CurN_{r, r} + \CurE_r$ while for neighbors of the opposite sign, it is $\CurN_{r, -r}$.
\end{proof}

\begin{corollary}\label{cor:qOddproperties}
	With the notation given in Theorem~\ref{thm:odd-q-neighbor-counts}, for odd $q$, the ${[n, k]}_q$ LCD neighbor graph:
	\begin{itemize}
		\item is regular with degree given by Theorem~\ref{thm:LCD-neighbors};
		\item has regular subgraphs corresponding to vertices $\LCDpq{n}{k}$ and $\LCDnq{n}{k}$ with regularity degrees $\CurN_{+, +} + \CurE_+$ and $\CurN_{-, -} + \CurE_-$;
		\item when restricted to edges between $\LCDpq{n}{k}$ and $\LCDnq{n}{k}$, the graph is biregular with (respective) degrees $\CurN_{+, -}$ and $\CurN_{-, +}$.
	\end{itemize}
\end{corollary}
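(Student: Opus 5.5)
The corollary is mostly a repackaging of Theorem~\ref{thm:odd-q-neighbor-counts} and Theorem~\ref{thm:LCD-neighbors}, so the plan is to read off each bullet from those results. The one place where real content could hide is the claim that the counts depend only on the sign.

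The first bullet follows directly from Theorem~\ref{thm:LCD-neighbors}. That theorem says the number of LCD neighbors of an arbitrary LCD code $C$ depends only on $q$, $n$ and $k$. Hence every vertex of $\Gq_q(n,k)=\Jq_q(n,k)[\LCDq{n}{k}]$ has this degree, which is Corollary~\ref{cor:regular_odd}.

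For the second and third bullets, fix $C\in\LCDq{n}{k}$ with $r=\Sign(C)$. Theorem~\ref{thm:odd-q-neighbor-counts} gives the number of neighbors of $C$ inside $\mathrm{LCD}_r{[n,k]}_q$ as $\CurN_{r,r}+\CurE_r$, and the number inside $\mathrm{LCD}_{-r}{[n,k]}_q$ as $\CurN_{r,-r}$.

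The key observation is that these expressions involve only $q,n,k,r$. The only code-dependent input is $\Sign(C)$; the auxiliary quantities $\Abs{\CurS_t(C)}$ and $\Abs{\CurV_t(C_\pm^\perp)}$ are themselves given purely in terms of signs and dimensions by Theorem~\ref{thm:subcode-count} and Corollary~\ref{cor:signed-vectors}. Consequences:
\begin{itemize}
\item every vertex of $\LCDpq{n}{k}$ has exactly $\CurN_{+,+}+\CurE_+$ neighbors in the induced subgraph on $\LCDpq{n}{k}$;
\item every vertex of $\LCDnq{n}{k}$ has exactly $\CurN_{-,-}+\CurE_-$ neighbors in its own class;
\item the bipartite graph $\Jq_q(n,k)[\LCDpq{n}{k},\LCDnq{n}{k}]$ has degrees $\CurN_{+,-}$ on the $+$ side and $\CurN_{-,+}$ on the $-$ side.
\end{itemize}
As an independent cross-check of the qualitative statement, Theorem~\ref{thm:orbits}(b) identifies $\LCDpq{n}{k}$ and $\LCDnq{n}{k}$ with the two $\Ort_n(\F_q)$-orbits. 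Theorem~\ref{thm:biregular} then gives regularity and biregularity without any computation.

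Finally, I would verify that $\CurN_{r,r}+\CurE_r+\CurN_{r,-r}$ equals the total degree from Theorem~\ref{thm:LCD-neighbors}, so that the three bullets are mutually consistent. The main obstacle is this bookkeeping, split by the parities of $n$ and $k$, checking the explicit closed forms of $\CurN_{r,s}$ and $\CurE_r$ against the definitions in the proof of Theorem~\ref{thm:odd-q-neighbor-counts}. In particular, for $n$ even and $k$ odd the sum must produce the correction term $-{\Leg{-1}{q}}^{n/2}q^{n/2-1}$. I would first treat the case $k$ odd, where $\Abs{\CurS_t(C)}$ does not depend on $r$, and then the case $k$ even, where the $r\cdot\Tauq{k}$ terms should cancel in the total.

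A consistency check via edge double counting is also available: $\Abs{\LCDpq{n}{k}}\,\CurN_{+,-}=\Abs{\LCDnq{n}{k}}\,\CurN_{-,+}$, using the orbit sizes from \cite{CMTQ18}. It is not needed for the corollary itself, but it would confirm the formulas.
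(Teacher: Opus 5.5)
Your proposal is correct and follows the same route as the paper. The paper states this corollary without a separate proof, as an immediate read-off of Theorem~\ref{thm:LCD-neighbors} for the total degree and of Theorem~\ref{thm:odd-q-neighbor-counts} for the within-class and cross-class degrees, which depend only on $q,n,k$ and the sign; Theorems~\ref{thm:orbits}(b) and~\ref{thm:biregular} give the structural reason for the (bi)regularity. Your extra check that $\CurN_{r,r}+\CurE_r+\CurN_{r,-r}$ equals the total degree is not needed for the corollary, but it does hold in all four parity cases (for $n$ even and $k$ odd the sum is $\CurE_0-\Tauq{n}\,q^{\frac{n}{2}-1}$, as required).
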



\section{Conclusion}\label{sec:Conclusions}

In order to better understand the structure of LCD codes, we analyzed their neighbor graphs (i.e., the induced subgraphs of the Grassmann graphs given by LCD codes).
We have shown that these graphs are themselves regular with degree given by Theorem~\ref{thm:LCD-neighbors} and connected.
We further analyzed these graphs by viewing the induced subgraphs given by the orbits of the action of the orthogonal group upon the codes, showing that these, in turn, are regular, that the bipartite graphs formed by the edges between orbits are biregular, and explicitly calculated these (bi)regularity degrees.


\printbibliography[heading=bibnumbered]{}


\end{document}

%% file: packages.tex
\usepackage[a4paper]{geometry} 

\usepackage[
	backend=biber,
	maxbibnames=99
]{biblatex}
\usepackage{amsfonts, amsgen, amsmath, amssymb, amsthm}
\usepackage{authblk}
\usepackage{booktabs}
\usepackage{calc}
\usepackage{enumerate}
\usepackage{hyperref}
\usepackage{latexsym}
\usepackage{mathtools}
\usepackage{xcolor}

\theoremstyle{definition}
\newtheorem{theorem}{Theorem}[section]

\newtheorem{corollary}[theorem]{Corollary}
\newtheorem{definition}[theorem]{Definition}
\newtheorem{example}[theorem]{Example}
\newtheorem{lemma}[theorem]{Lemma}

\newtheorem{proposition}[theorem]{Proposition}

\newtheorem{remark}[theorem]{Remark}

%% file: macros.tex
\newcommand{\F}{\mathbb{F}}
\newcommand{\N}{\mathbb{N}}

\newcommand{\Proj}{\mathbb{P}}

\newcommand{\Gd}[1]{\mathrm{d} \left(#1\right)}                        
\newcommand{\Leg}[2]{\left(\frac{#1}{#2}\right)}                       
\newcommand{\Orb}{\mathrm{Orb}}
\newcommand{\Qbinom}[3]{{\begin{bmatrix} #1 \\ #2 \end{bmatrix}}_{#3}} 

\newcommand{\Sign}{\mathrm{sign}}
\newcommand{\Spec}{\mathrm{Spec}}
\newcommand{\Wt}{\mathrm{wt}}

\newcommand{\Abs}[1]{\left|#1\right|}
\newcommand{\Ang}[1]{\left\langle#1\right\rangle}

\newcommand{\LCDoo}[2]{\mathrm{LCD}_{\mathrm{oo}} [#1, #2]}
\newcommand{\LCDoe}[2]{\mathrm{LCD}_{\mathrm{oe}} [#1, #2]}
\newcommand{\LCDeo}[2]{\mathrm{LCD}_{\mathrm{eo}} [#1, #2]}
\newcommand{\LCDee}[2]{\mathrm{LCD}_{\mathrm{ee}} [#1, #2]}

\newcommand{\LCDpq}[2]{\mathrm{LCD}_+ {[#1, #2]}_q}
\newcommand{\LCDnq}[2]{\mathrm{LCD}_- {[#1, #2]}_q}
\newcommand{\LCDq}[2]{\mathrm{LCD} {[#1, #2]}_q}

\newcommand{\CodeCe}{C_{\mathrm{e}}}
\newcommand{\CodeCeo}{C_{\mathrm{eo}}}
\newcommand{\CodeCoo}{C_{\mathrm{oo}}}
\newcommand{\CodeCoe}{C_{\mathrm{oe}}}

\newcommand{\MatGoo}{G_{\mathrm{oo}}}
\newcommand{\MatGoe}{G_{\mathrm{oe}}}

\newcommand{\CurB}{\mathcal{B}}

\newcommand{\CurE}{\mathcal{E}}

\newcommand{\CurJ}{\mathcal{J}}

\newcommand{\CurN}{\mathcal{N}}
\newcommand{\CurS}{\mathcal{S}}
\newcommand{\CurV}{\mathcal{V}}

\newcommand{\Jq}{\CurJ}

\newcommand{\Gq}{\mathfrak{G}}
\newcommand{\Goo}[2]{\Gq_{\mathrm{oo}} (#1, #2)}
\newcommand{\Goe}[2]{\Gq_{\mathrm{oe}} (#1, #2)}
\newcommand{\Geo}[2]{\Gq_{\mathrm{eo}} (#1, #2)}

\newcommand{\GL}{GL}

\newcommand{\Ort}{O}
\newcommand{\Tauq}[1]{\tau_q (#1)}